\documentclass[10pt]{preprint}

\usepackage[margin=3.5cm]{geometry}
\usepackage[utf8x]{inputenc}
\usepackage[english]{babel}
\usepackage[full]{textcomp}
\usepackage{placeins}
\usepackage[osf]{newtxtext}

\usepackage{amssymb}
\usepackage{amsmath}
\usepackage{amsthm}

\usepackage{mhequ}
\usepackage{booktabs}
\usepackage{tikz}
\usepackage{mathrsfs}
\usepackage[noadjust]{cite}
\usepackage{microtype}
\usepackage{comment}
\usepackage{slashed}
\usepackage{mathtools}
\usepackage{centernot}
\usepackage{footnote}
\usepackage{enumerate}
\usepackage[shortlabels]{enumitem}
\usepackage{stackrel}
\usepackage{longtable}
\usepackage{cprotect}
\usepackage{xstring}
\usepackage{bbm}
\usepackage{dsfont}
\usepackage{float}
\usepackage[colorlinks=true, pdfstartview=FitV, linkcolor=colorLink, citecolor=colorCite, urlcolor=colorLink, linktocpage=true, hypertexnames=false]{hyperref}

\usepackage{bm}

\usepackage[capitalize]{cleveref}

\crefname{enumi}{Theorem}{Theorems}
\Crefname{enumi}{Theorem}{Theorems}
\newenvironment{thmenumerate}[1][]{%
  \begin{enumerate}[label=\upshape(\roman*),ref=\thetheorem(\roman*\,),leftmargin=*,font=\upshape,#1]
    \renewcommand{\theenumi}{\thetheorem(\roman{enumi})}%
}{%
  \end{enumerate}
}

\usepackage{upgreek}
\usepackage{parskip}
\usepackage{graphicx}
\usepackage{tikz}
\usetikzlibrary{positioning,arrows.meta}

\makeatletter
\renewcommand{\paragraph}{%
	\@startsection{paragraph}{4}%
	{\z@}{1.5ex \@plus 1.5ex \@minus .2ex}{-0.7em}%
	{\normalfont\normalsize\bfseries}%
}
\makeatother

\makeatletter
\def\thm@space@setup{%
	\thm@preskip=\parskip \thm@postskip=0pt
}
\makeatother

\setlist[itemize]{leftmargin=5mm}

\DeclareSymbolFont{timesoperators}{T1}{ptm}{m}{n}
\SetSymbolFont{timesoperators}{bold}{T1}{ptm}{b}{n}
\DeclareMathAlphabet{\mathbb}{U}{jkpsyb}{m}{n}
\SetMathAlphabet{\mathbb}{bold}{U}{jkpsyb}{bx}{n}

\allowdisplaybreaks
\definecolor{colorLink}{RGB}{0,100,162}
\definecolor{colorCite}{RGB}{8,124,100}
\def\R{\mathbb{R}}

\def\RR{\mathbb{R}}
\def\R{\mathbb{R}}
\def\PP{\mathbb{P}}
\def\cP{\mathcal{P}}
\def\EE{\mathbb{E}}

\def\to{\rightarrow}

\def\i{\infty}
\def\d{\text{d}}
\def\d{{\rm{d}}}

\newcommand{\bb}[1]{\mathbb{#1}}
\newcommand{\ca}[1]{\mathcal{#1}}
\newcommand{\scr}[1]{\mathscr{#1}}
\newcommand{\worknote}[1]{}
\newcommand{\dist}{\mathrm{dist}}

\let\alpha\upalpha
\let\beta\upbeta
\let\delta\updelta
\let\gamma\upgamma
\let\mu\upmu
\let\eta\upeta
\let\nu\upnu
\let\rho\uprho
\let\chi\upchi
\let\xi\upxi
\let\zeta\upzeta
\let\tau\uptau
\let\varphi\upvarphi
\let\lambda\uplambda
\let\theta\uptheta
\let\pi\uppi
\let\Upsilon\Upupsilon
\let\Theta\Uptheta
\let\Psi\Uppsi
\let\Xi\Upxi

\def\dash{\leavevmode\unskip\kern0.18em--\penalty\exhyphenpenalty\kern0.18em}
\def\slash{\leavevmode\unskip\kern0.15em/\penalty\exhyphenpenalty\kern0.15em}

\makeatletter
\renewcommand{\operator@font}{\mathgroup\symtimesoperators}
\makeatother

\makeatletter
\DeclareRobustCommand{\TitleEquation}[2]{\texorpdfstring{\StrLeft{\f@series}{1}[\@firstchar]$\if%
		b\@firstchar\boldsymbol{#1}\else#1\fi$}{#2}}
\makeatother

\makeatletter
\newcommand{\pushright}[1]{\ifmeasuring@#1\else\omit\hfill$\displaystyle#1$\fi\ignorespaces}
\newcommand{\pushleft}[1]{\ifmeasuring@#1\else\omit$\displaystyle#1$\hfill\fi\ignorespaces}
\makeatother

\renewcommand{\bar}{\overline}
\renewcommand{\hat}{\widehat}
\renewcommand{\tilde}{\widetilde}

\makeatletter
\newcommand{\oset}[3][0ex]{%
	\mathrel{\mathop{#3}\limits^{
			\vbox to#1{\kern-2\ex@
				\hbox{$\scriptstyle#2$}\vss}}}}
\makeatother

\usetikzlibrary{shapes.misc}
\usetikzlibrary{shapes.symbols}
\usetikzlibrary{shapes.geometric}
\usetikzlibrary{decorations}
\usetikzlibrary{decorations.markings}
\usetikzlibrary{calc}
\usetikzlibrary{external}
\usetikzlibrary{arrows}
\usetikzlibrary{patterns}
\theoremstyle{plain}

\newtheorem{theorem}{Theorem}[section]
\crefname{theorem}{Theorem}{Theorems}

\newtheorem{corollary}[theorem]{Corollary}
\crefname{corollary}{Corollary}{Corollaries}

\newtheorem{lemma}[theorem]{Lemma}
\crefname{lemma}{Lemma}{Lemmas}

\newtheorem{proposition}[theorem]{Proposition}
\crefname{proposition}{Proposition}{Propositions}

\crefname{claim}{Claim}{Claims}

\crefname{theoremA}{Theorem}{Theorems}

\crefname{propositionA}{Proposition}{Propositions}

\theoremstyle{definition}

\newtheorem{definition}[theorem]{Definition}
\crefname{definition}{Definition}{Definitions}

\crefname{notation}{Notation}{Notations}

\crefname{acknowledgements}{Acknowledgements}{Acknowledgements}

\newtheorem{assumption}[theorem]{Assumption}
\crefname{assumption}{Assumption}{Assumptions}

\newtheorem{remark}[theorem]{Remark}
\crefname{remark}{Remark}{Remarks}

\crefname{observation}{Observation}{Observations}

\crefname{example}{Example}{Examples}

\crefname{manualassumptioninner}{Assumption}{Assumptions}

\crefname{customthm}{Theorem}{Theorems}

\numberwithin{equation}{section}

\usepackage[textsize=scriptsize,linecolor=white,bordercolor=red,backgroundcolor=yellow]{todonotes}
\usepackage{autonum}

\def\rm{\mathrm}
\def\Law{\mathrm{Law}}
\usepackage[most]{tcolorbox}
\usepackage{caption}
\allowdisplaybreaks

\begin{document}
	\title{Weak synchronisation for McKean--Vlasov SDEs}

	\author{Benjamin Gess$^{1}$, Rishabh S.\ Gvalani$^{2}$, Shanshan Hu$^{3}$}
	
	\institute{
		Institut f\"{u}r Mathematik, Technische Universit\"{a}t Berlin \& Max--Planck Institute for Mathematics in the Sciences, Leipzig, Email: \href{mailto:benjamin.gess@tu-berlin.de}{\color{black} \texttt{benjamin.gess@tu-berlin.de}} 
		\and
		School of Mathematics, University of Edinburgh, Email: \href{mailto:rgvalani@ed.ac.uk}{\color{black}\texttt{rgvalani@ed.ac.uk}}
		\and
		Institut f\"{u}r Mathematik, Technische Universit\"{a}t Berlin, Email: \href{mailto:shanshan.hu@tu-berlin.de}{\color{black} \texttt{shanshan.hu@tu-berlin.de}} 
	}
	
	\maketitle	
	\begin{abstract}
Synchronisation by noise for McKean–Vlasov stochastic differential equations is investigated. A transfer principle is introduced by which synchronisation by noise and diagonal mixing can be transferred from an associated limiting frozen-diffusion SDE to a genuinely law-dependent McKean–Vlasov SDE. The usefulness of this principle is demonstrated through an application to ensemble Kalman sampling, thereby providing its first application in a sampling context.\\
Synchronisation for SDEs with multiplicative noise is then revisited from the perspective of sampling. Existing general frameworks providing sufficient conditions for synchronisation by noise are refined and extended to a control-oriented setting on noncompact state spaces, and coefficient-level conditions are derived by which these criteria can be verified. Motivated by extrapolation schemes in sampling, the freedom in the construction of couplings for a fixed sampler is emphasised, together with the advantage of choosing couplings that exhibit favourable synchronisation properties.
	\end{abstract}

	\setcounter{tocdepth}{2}
	\tableofcontents

\section{Introduction}
\label{sec:literature-contributions}

Synchronisation by noise refers to the global asymptotic stability of solutions to SDEs starting from different initial conditions and driven by the same realisation of the noise \cite{FlandoliGessScheutzow.2017.PTRF511}. In the framework of random dynamical systems (RDS), weak synchronisation means that the minimal weak point attractor is almost surely a singleton,
\begin{equation}
  A(\omega)=\{a(\omega)\},
  \qquad \PP\text{-a.s.}
\end{equation}
which entails that solutions starting from different points approach one another in probability.  Besides being a fundamental property of stochastic flows, synchronisation provides a natural coupling mechanism in sampling. In particular, it may force the law of the common-noise two-point motion to concentrate on the diagonal, a property relevant to variance reduction for coupled estimators, see \cite{LPP15} and Section \ref{intro:RR} below.

The purpose of this paper is twofold: First, we refine and extend the established general criteria for synchronisation by noise to a control-oriented setting on noncompact state spaces, and derive coefficient-level conditions by which these criteria can be verified for multiplicative diffusions. The main focus of this part of the work, is its implications for sampling, emphasising the role of the choice of coupling, and its synchronising behaviour, for a fixed one-point sampler.  Second, and more importantly, we study synchronisation by noise for McKean--Vlasov SDEs. We
introduce a transfer principle, and a corresponding two-point statement, which allows to transfer synchronisation and diagonal mixing of a limiting frozen diffusion to a genuinely law-dependent, asymptotically stationary McKean--Vlasov SDE. The usefulness of this principle is illustrated by an application to the ensemble Kalman sampler, which gives a first sampling application of the transfer principle. We discuss these contributions in turn.

\textbf{Multiplicative-noise SDEs and their relevance to sampling: } The basic mechanism leading to synchronisation of SDEs has been thoroughly studied in the literature, see e.g. \cite{FlandoliGessScheutzow.2017.PTRF511} for the case of unbounded state space and additive noise, \cite{LPP15} for a Lyapunov function type argument applicable also for certain multiplicative noise SDEs, as well as the connection to Richardson--Romberg extrapolation, and \cite{Baxendale1991} for a general criterion for synchronisation on compact state space.

The first contribution of this work is an abstract criterion for synchronisation slightly generalising the abstract framework introduced in \cite{FlandoliGessScheutzow.2017.PTRF511} and \cite{Baxendale1991}. Precisely,  \cref{thm:weaksynchronisationRDS} assumes strong mixing and local weak stability on a contracting region, together with direct positive-probability reachability of that region by each pair of trajectories. Compared with the criterion of \cite{FlandoliGessScheutzow.2017.PTRF511}, this replaces pointwise strong swift
transitivity and the separate global pairwise recurrence assumption by a single reachability condition. Subsequently, in \cref{thm:weaksynchronisationSDE} and Section \ref{sec:synchr_SDE}, we employ this criterion to develop sufficient coefficient level conditions for  multiplicative SDEs.

The main emphasis of this part of the paper lies in its implications for sampling. Recent work \cite{LelievrePavliotisRobinSantetStoltz2025} has demonstrated that an appropriate choice of state-dependent diffusion matrix in a Gibbs sampler can significantly increase the associated spectral gap and thereby improve sampling efficiency. Here we show that this does not exhaust the available design freedom: even after the diffusion matrix, and hence the one-point generator, has been fixed, its factorisation into noise fields still determines the synchronous coupling and may decisively alter its long-time two-point behaviour. Indeed, for the one-dimensional double-well model considered in \cite[Section~4.1, Figure~3]{LelievrePavliotisRobinSantetStoltz2025}, we show that the canonical principal-square-root realisation is not weakly synchronising, whereas an alternative overcomplete realisation of exactly the same one-point diffusion yields a synchronising synchronous coupling (see \cref{sec:optimized-multiplicative-synchronisation}). This example provides a concrete application of the general theory of synchronisation by noise for multiplicative SDEs developed here. Thus, in coupled sampling procedures based on common noise, such as those used in Richardson--Romberg extrapolation, care should be taken not only in selecting an effective multiplicative diffusion but also in choosing a noise realisation whose induced synchronous coupling synchronises.

\paragraph{McKean--Vlasov equations.}

The main novelty of the paper concerns the analysis of synchronisation by noise for coupled microscopic--macroscopic system
\begin{subequations}
\label{system00}
\begin{align}
  \d Y_t
  &=
  b(Y_t,\mu_t)\,\d t
  +\sigma(Y_t,\mu_t)\,\d W_t,
  \qquad
  Y_0=y\in\RR^d,
  \label{eq:MVRDE00}\\
  \partial_t\mu_t
  &=
  -\nabla\cdot\bigl(b(\cdot,\mu_t)\mu_t\bigr)
  +\frac12 D^2:
  \bigl((\sigma\sigma^T)(\cdot,\mu_t)\mu_t\bigr),
  \qquad
  \mu_0=\mu\in\ca P_2(\RR^d).
  \label{eq:MVPDE00}
\end{align}
\end{subequations}
Here, the measure curve \(\mu_t\) evolves deterministically, while the particle
coordinate is driven by the noise. The problem of sychronisation by noise in this context is motivated from extrapolation schemes in sampling, see Section \ref{intro:RR} below. 

In contrast to the case of non-law-dependent SDEs, since the coefficients of \eqref{system00} depend on the evolving law, the particle coordinate is not governed by a fixed one-point
Markov semigroup. Likewise, the common-noise two-point motion is generally not
a time-homogeneous diffusion on the ordinary product space. Classical
synchronisation and confluence theorems therefore cannot be applied directly,
even when \(\mu_t\) converges to equilibrium.

Following the RDS construction for McKean--Vlasov SDEs in
\cite{gess2025random}, we lift \eqref{system00} to the particle--measure
space,
\[
  (y,\mu)\longmapsto
  \bigl(Y_t^{y,\mu}, S_t\mu\bigr).
\]
This state space agrees with that used to linearise nonlinear Fokker--Planck
equations in \cite{RenRocknerWang.2022.JDE1}. The corresponding lifted Markov
semigroup records simultaneously the particle and measure coordinates.

 The new step is an asymptotic freezing argument. If
\begin{equation}
    S_t\mu\xlongrightarrow[t\to\infty]{\ca{W}_2}\mu^\infty,
\end{equation}
we compare this nonautonomous particle equation with the frozen SDE
\begin{equation}
  \d Y_t^\infty=
  b(Y_t^\infty,\mu^\infty)\,\d t
  +\sigma(Y_t^\infty,\mu^\infty)\,\d W_t.
  \label{eq:frozenIntro}
\end{equation}
If
$\mu_\omega^\infty$ denotes the statistical equilibrium of this frozen
diffusion, \cref{thm:weak_synchronisation_MVSDE} identifies the limiting random attractor of the lifted
system as
\begin{equation}
     \operatorname{supp}(\mu_\omega^\infty)
  \times\{\mu_\infty\} \, ,
\end{equation}
conditional on strong mixing of the lifted semigroup of the coupled system \eqref{system00}. In particular, synchronisation of the frozen SDE implies conditional
synchronisation of the original McKean--Vlasov equation in the particle
coordinate. 
 
  There exist
global dissipativity assumptions (see \cref{rem:global_dissipativity_E_0}) that guarantee strong mixing for the lifted semigroup. However, these assumptions are not satisfied by the covariance-modulated sampling
dynamics. The second route is an asymptotic-autonomy argument established in \cref{prop:characterisation_of_E_0}. Exponential
convergence of \(S_t\mu\), quantitative mixing of the frozen equation,
suitable continuity of \(b\) and \(\sigma\sigma^T\) in the measure variable,
and uniform-in-time moment estimates imply tightness and establish strong mixing for the lifted semigroup. Weak synchronisation of
the frozen equation can then be imported without imposing global contraction
on the original law-dependent dynamics. The result is modular: additive-noise
theorems from \cite{FlandoliGessScheutzow.2017.PTRF511}, Lyapunov-exponent criteria, or
the control-based multiplicative-noise results above may all be used for the
frozen equation.

As an application, we consider the mean-field ensemble Kalman sampler for the
Gaussian target $\mu^\infty=\mathsf N_{{\sf m},{\sf C}}$. For the quadratic
potential \(V(y)=\frac12|y-{\sf m}|_{\sf C}^2\), its particle equation is
\[
  \d Y_t
  =
  -\operatorname{Cov}(\mu_t){\sf C}^{-1}(Y_t-{\sf m})\,\d t
  +\sqrt{2\operatorname{Cov}(\mu_t)}\,\d W_t.
\]
Available estimates for the covariance-modulated nonlinear Fokker--Planck
equation give exponential convergence of \(\mu_t\) to the target
\cite{BurgerEtAl2025}. Freezing at equilibrium gives the additive
Ornstein--Uhlenbeck equation
\[
  \d Y_t^\infty
  =
  -(Y_t^\infty-{\sf m})\,\d t
  +\sqrt{2{\sf C}}\,\d W_t,
\]
whose synchronisation is classical. The transfer theorem yields conditional
weak synchronisation of the ensemble Kalman dynamics on
\(\RR^d\times\ca P_{2,+}(\RR^d)\), and
\begin{equation}
  \Law(Y_t^1,Y_t^2)
  \xrightarrow[t\to\infty]{w}
  \mu^\infty_\Delta.
  \label{eq:MVTwoPointMixingIntro}
\end{equation}identifies the limiting common-noise coupling
with the diagonal coupling. \\
This diagonal two-point limit is the dynamical input required to extend
common-noise variance-reduction arguments to  Richardson--Romberg extrapolation in nonlinear samplers. The rigorous validation of this is subject of upcoming work.

\subsection{Relevance to Richardson--Romberg extrapolation in sampling}
\label{intro:RR}

Ensemble-based methods are widely used to sample probability distributions arising in Bayesian inverse problems and machine learning. A representative example is the ensemble Kalman sampler (EKS), which evolves an interacting particle ensemble and uses its empirical covariance to adapt the dynamics to the geometry of the target distribution. In the mean-field limit, an individual particle is described by a McKean--Vlasov SDE, motivating the study of numerical sampling methods for such dynamics and their long-time behaviour.

In practice, sampling from the invariant probability measure requires a time discretisation, typically an Euler--Maruyama scheme with decreasing stepsizes. The stepsize balances convergence rate against discretisation bias: larger stepsizes make the total simulation time $T_n$ grow faster, but accumulated time-discretisation errors may leave a non-vanishing bias in the limiting distribution.

Richardson--Romberg extrapolation addresses this trade-off by coupling two Euler--Maruyama schemes with stepsizes $t_k$ and $t_k/2$ and taking an appropriate linear combination of their empirical measures. It cancels the leading discretisation bias and thereby
permits larger step sizes at the central-limit scale.  Its variance is not,
however, determined solely by the one-point dynamics: it involves the
invariant probability measure of the coupled two-point motion driven by the
same noise, \begin{subequations}\label{eq:twopointmotionIntro}
	\begin{align}
		\d X_t=\,\,&b(X_t, \Law(X_t))\,\d t
		+\sigma(X_t, \Law(X_t))\,\d W_t,\label{eq:twopointmotionIntro_1}\\
		\d Y_t=\,\,&b(Y_t, \Law(Y_t))\,\d t
		+\sigma(Y_t, \Law(Y_t))\,\d W_t.
	\end{align}
\end{subequations}
This leads to the principal structural question addressed here:
when is that two-point invariant measure uniquely determined?

If $\pi$ denotes the unique invariant probability measure of the one-point motion and $\pi^{(2)}$ \emph{an} invariant measure of the two-point motion, the asymptotic variance of the extrapolated estimator contains a correlation term of the form
\begin{align}\label{eq:varianceIntro}
	\int_{\RR^d\times\RR^d}
	\left\langle
	\sigma^T\nabla g_f(x),
	\sigma^T\nabla g_f(y)
	\right\rangle
	\,\pi^{(2)}(\d x,\d y)\,,
\end{align}
where $g_f$ is the solution to the Poisson equation $\mathcal{A} g_f =f$ where $\mathcal{A}$ is the generator associated with \eqref{eq:twopointmotionIntro_1} with the measure argument frozen at $\pi$, for a suitable test function $f$. Note that since $\pi^{(2)}$ may not be unique it depends on the choice of initial condition.
The one-point ergodic behaviour does not control this term. In particular, marginal convergence
\[
	\Law(X_t)\xrightarrow[t\to\infty]{}\pi,
	\qquad
	\Law(Y_t)\xrightarrow[t\to\infty]{}\pi
\]
neither determines the limiting joint distribution of $(X_t,Y_t)$ nor guarantees uniqueness of its invariant probability measure, since the common noise may preserve non-trivial correlations. The sampling application therefore requires the stronger property
\begin{align}\label{eq:generalisedstrongmixingIntro}
	\Law(X_t,Y_t)
	\longrightarrow
	\pi_\Delta,
	\qquad
	\pi_\Delta
	\coloneqq
	\pi\circ(x\mapsto(x,x))^{-1},
\end{align}
for all initial laws $\Law(X_0),\Law(Y_0)\in \ca{P}(\RR^d)$ . This is a generalised version of  strong mixing (see \cref{matierial_def:strong_mixing}) which concentrates the limiting two-point law on the diagonal and indeed implies that $\pi_\Delta$ is the unique invariant measure of the two-point motion. The correlation term in \eqref{eq:varianceIntro} then reduces to its one-point counterpart, so the Richardson--Romberg estimator cancels the leading-order bias without increasing the asymptotic variance relative to the original Euler--Maruyama estimator.

\emph{The central question is therefore how to establish this two-point mixing property.}

\begin{figure}[h]
	\centering
	\resizebox{\textwidth}{!}{%
		\begin{tikzpicture}[
			font=\small,
			>=Latex,
			model/.style={
				draw=black!65,
				fill=black!2,
				rounded corners=2pt,
				align=center,
				inner sep=7pt,
				minimum height=1.25cm,
				text width=3.2cm
			},
			main/.style={
				draw=black!65,
				fill=black!3,
				rounded corners=2pt,
				align=center,
				inner sep=7pt,
				minimum height=1.25cm,
				text width=4.3cm
			},
			sync/.style={
				draw=red!70!black,
				fill=red!2,
				line width=0.8pt,
				rounded corners=2pt,
				align=center,
				inner sep=7pt,
				minimum height=1.25cm,
				text width=3.9cm
			},
			mixing/.style={
				draw=red!70!black,
				fill=red!2,
				line width=0.8pt,
				rounded corners=2pt,
				align=center,
				inner sep=7pt,
				minimum height=1.25cm,
				text width=4.7cm
			},
			conclusion/.style={
				draw=black!65,
				fill=black!3,
				rounded corners=2pt,
				align=center,
				inner sep=7pt,
				minimum height=1.45cm,
				text width=5.4cm
			},
			arrow/.style={
				->,
				draw=black!70,
				line width=0.75pt
			},
			redarrow/.style={
				->,
				draw=red!70!black,
				line width=0.85pt
			},
			line/.style={
				draw=black!70,
				line width=0.75pt
			},
			redline/.style={
				draw=red!70!black,
				line width=0.85pt
			}
		]

			% -------------------------------------------------
			% Nodes
			% -------------------------------------------------

			\node[model] (mv) at (0,0)
				{\textbf{McKean--Vlasov SDEs}};

			\node[main] (clt) at (6.5,1.45)
				{\textbf{Central limit theorem}\\[1mm]
				 for the extrapolated empirical measure};

			\node[sync] (sync) at (5.2,-1.45)
				{\textbf{Weak synchronisation}};

			\node[mixing] (mixing) at (10.4,-1.45)
				{\textbf{Generalised strong mixing}\\[1mm]
				 convergence to $\pi_\Delta$};

			\node[conclusion] (rr) at (16.7,0)
				{\textbf{Efficient Richardson--Romberg extrapolation}\\[1mm]
				 Larger step sizes without increasing
				 the asymptotic variance};

			% Fork and merge points
			\coordinate (split) at (2.6,0);
			\coordinate (merge) at (13.4,0);

			% -------------------------------------------------
			% Fork from the McKean--Vlasov dynamics
			% -------------------------------------------------

			\draw[line] (mv.east) -- (split);

			\draw[arrow]
				(split)
				|- (clt.west);

			\draw[redarrow]
				(split)
				|- (sync.west);

			% -------------------------------------------------
			% Lower dynamical branch
			% -------------------------------------------------

			\draw[redarrow]
				(sync.east) -- (mixing.west);

			% -------------------------------------------------
			% Merge of the two ingredients
			% -------------------------------------------------

			\draw[arrow]
				(clt.east)
				-| (merge);

			\draw[redarrow]
				(mixing.east)
				-| (merge);

			\draw[arrow]
				(merge) -- (rr.west);

			% Small junctions
			\fill[black!70] (split) circle (1.25pt);
			\fill[black!70] (merge) circle (1.25pt);

		\end{tikzpicture}%
	}

	\caption{
		The central limit theorem and the long-time behaviour of the common-noise
		two-point motion provide two complementary inputs to Richardson--Romberg
		extrapolation. Weak synchronisation implies generalised strong mixing
		towards the diagonal coupling $\mu^\infty_\Delta$; combined with the
		central limit theorem, this yields efficient extrapolation without an
		increase in asymptotic variance.
	}
	\label{fig:weak-synchronisation-clt-rr}
\end{figure}

Our contribution to this picture is therefore to identify and verify the dynamical condition required of the common-noise two-point motion. We prove that weak synchronisation of the McKean--Vlasov dynamics, together with the relevant one-point mixing assumptions, yields generalised strong mixing of the two-point McKean--Vlasov system and hence uniqueness of its invariant probability measure, which is the diagonal coupling $\pi_\Delta$. A transfer principle reduces the required weak synchronisation of the law-dependent dynamics to synchronisation of the limiting frozen SDE, and we verify this framework for the ensemble Kalman sampler. These results provide the two-point dynamical input required for Richardson--Romberg extrapolation; the corresponding McKean--Vlasov central limit theorem and numerical analysis are left to future work.

\subsection{Overview of the existing literature}
\paragraph{Literature on synchronisation by noise.}
The phenomenon of synchronisation by noise has been extensively investigated in the literature. We organise the following discussion according to the main techniques used to establish synchronisation.

Early results originated from order-preserving or monotone random dynamical systems \cite{ArnoldChueshovDSC, ChueshovMonotonelecturenotes} and were developed for one-dimensional additive SDEs and scalar-valued reaction--diffusion equations \cite{CrauelJDDE1998, CaraballoPAMS2007}, using monotonicity of the RDS together with uniqueness of the invariant probability measure. Subsequently, the theory was extended to higher-dimensional additive SDEs and stochastic partial differential equations; see, among others, \cite{FlandoliGessScheutzow.2017.AP1325, FlandoliGessScheutzow.2017.PTRF511, GessJDDE2013, ButkovskyScheutzowCMP2020, GessTsatsoulis.2024.AP1903, GessTsatsoulis.2020.SD17}.
In finite dimensions, \cite{Baxendale1991} studied synchronisation for SDEs on compact manifolds including SDEs with multiplicative noise, while \cite{FlandoliGessScheutzow.2017.PTRF511} developed a general framework on Polish spaces and applied it to additive SDEs on $\RR^d$, including systems with double-well potentials. An alternative approach was given in \cite{LPP15} based on direct constructions of Lyapunov functions for the two point motion for a class of SDEs with multiplicative noise.  Large-deviation and perturbative techniques have also been employed in \cite{Tearne2008PTRF, MartinelliScoppola1988CMP}. Relations between synchronisation, two-point contractibility, and Lyapunov exponents have been investigated in \cite{Newman2018ETDS, ScheutzowVorkastner2018Springer}. In the opposite regime, chaotic behaviour associated with positive top Lyapunov exponents has been studied in \cite{BedrossianBlumenthalSam2022Inven, BedrossianBlumenthalJEMS, BlumenthalEngelAlexandra2023PTRF}.
Beyond additive noise, synchronisation has been considered for equations with linear multiplicative noise \cite{ArnoldCrauelWihstutz1983, CaraballoRobinson2004SCL} and master--slave systems \cite{ChueshovSchmalfu2010JMP}. More recently, synchronisation mechanisms for degenerate and non-Markovian noises have attracted increasing attention; see, for example, \cite{engel2026random, liu2025synchronisation, blessing2026synchronisation, neamctu2025negativity}.
\paragraph{Literature on McKean--Vlasov SDEs.}
McKean--Vlasov SDEs arise naturally as mean-field limits of interacting particle systems; see \cite{Sznitman.1991.165, POCReview1, POCReview2}. They appear in mathematical physics \cite{meanfieldDuke, JabinWang.2018.IM523}, machine learning and sampling \cite{Garbuno-InigoHoffmannLiStuart.2020.SJoADS412, CHSV24}, and biological models such as chemotaxis \cite{S00}.
Their long-time behaviour is considerably more subtle than that of distribution-independent SDEs. In particular, the usual transition operators associated with solutions starting from deterministic points do not in general form a linear Markov semigroup, and nonlinear Markov processes may exhibit non-standard ergodic behaviour; see \cite{Butkovsky.2014.TPA661, WFYSPA2018}. A substantial literature has therefore developed quantitative ergodicity results for McKean--Vlasov SDEs using nonlinear versions of Harris' theorem, coupling methods, Lyapunov functions, and Wasserstein contraction; see, for example, \cite{Butkovsky.2014.TPA661, harrisergodicityTransection, Wang.2023.SPA265}.
Long-time behaviour has also been studied through interacting particle approximations and uniform-in-time propagation of chaos \cite{LiuWuZhang.2021.CMP179, DelgadinoGvalaniPavliotisSmith2023CMP}. For the ensemble Kalman sampler, ergodicity was first established for Gaussian initial data in \cite{Garbuno-InigoHoffmannLiStuart.2020.SJoADS412} and later extended to more general initial distributions using covariance-modulated optimal transport and gradient-flow methods \cite{BurgerEtAl2025}. Random dynamical systems for McKean--Vlasov equations were constructed in \cite{gess2025random}, providing the framework in which the synchronisation problem considered here can be formulated.
\paragraph{Literature on sampling.}
Sampling methods play a central role in statistics, machine learning, and Bayesian inference. Classical approaches include Markov chain Monte Carlo, overdamped and underdamped Langevin dynamics, Hamiltonian Monte Carlo, and Stein variational gradient descent. More recently, ensemble-based methods such as ensemble Kalman inversion, ensemble Kalman filtering, and ensemble Kalman sampling have provided particle-based alternatives which adapt to the geometry of the target distribution through empirical covariance information.
For numerical sampling of invariant probability measures, decreasing-step Euler schemes and their central limit behaviour have been studied extensively. Richardson--Romberg extrapolation provides a higher-order correction by cancelling the leading discretisation bias; see \cite{LPP15}. The present work highlights that, when two coupled discretisations are used, the efficiency of this extrapolation is intrinsically linked to the long-time behaviour of the corresponding two-point dynamics. This provides the connection between numerical sampling and weak synchronisation which motivates the present study.
\subsection{Organisation of the paper}
In \cref{sec2:preliminaries}, we introduce the notation and prerequisites from the theory of synchronisation, including white-noise random dynamical systems, function spaces, and $M$-attractors. In \cref{sec3:weaksynchroforSDE}, we first formulate sufficient conditions for weak synchronisation at the level of abstract random dynamical systems and then provide verifiable assumptions for multiplicative SDEs in terms of the top Lyapunov exponent, reachability, and H\"ormander's condition. Building on these results, weak synchronisation and generalised strong mixing for McKean--Vlasov systems \eqref{eq:MVRDE00}--\eqref{eq:MVPDE00} is investigated in \cref{sec4:weaksynchroforMVSDE} by means of the lifted semigroup on $\RR^d\times\ca{P}_2(\RR^d)$.

\section{Preliminaries}\label{sec2:preliminaries}
In this section, we introduce some preliminaries and notations which will be useful for the rest of the paper. 

We start by introducing some notations for function spaces and probability measures.

\begin{definition}
	Let $k\in \bb{Z}_+$ and  $0\leq \delta\leq1$. We denote by  $C^{k, \delta}(\RR^d; \RR^d)$  the space of functions $f: \RR^d\to \RR^d$, which are $k$ times continuously differentiable and (for $\delta>0$) whose $k$-derivative is  $\delta$-H\"{o}lder continuous (for $\delta=1$: Lipschitz continuous), moreover,  the norm 
	\begin{align}
		\|f\|_{C^{k, \delta}(\RR^d; \RR^d)}\coloneqq \sup_{y\in \RR^d}\frac{|f(y)|}{1+|y|} +\sum_{1\leq |\alpha|\leq k}\sup_{y\in \RR^d} |D^{\alpha} f(y)|+\sum_{|\alpha|=k}\sup_{x\neq y} \frac{|D^{\alpha} f(x)-D^{\alpha} f(y) |}{|x-y|^{\delta}}<\infty\,,
	\end{align}
	where $\alpha=(\alpha_1, \ldots, \alpha_d)\in (\bb{Z}_+)^d$ is a multi-index, $|\alpha|=\alpha_1+\ldots+\alpha_d$
	and 
	\begin{align}
		D^{\alpha} f(y)=D^{\alpha}_y f(y)=\frac{\partial^{|\alpha|}}{(\partial y_1)^{\alpha_1}\ldots (\partial y_d)^{\alpha_d}} f(y)\,.
	\end{align}
	Moreover, we denote by $C_b^{2}(\RR^d;\RR^d)$ the space of functions $f: \RR^d\to \RR^d$ which are twice differentiable and 
	\begin{align}
		\|f\|_{C_b^{2}(\RR^d; \RR^d)}\coloneqq \sup_{y\in \RR^d} |f(y)|+\sum_{1\leq |\alpha|\leq 2}\sup_{y\in \RR^d} |D^{\alpha} f(y)|<\infty\,.
	\end{align}
\end{definition}
We denote by $\mathcal{P}(\bb{R}^d)$ the space of all Borel probability measures on $\RR^d$ and, for any $p\geq 1$, by $\mathcal{P}_{p}(\bb{R}^d)$ the subspace of $\mathcal{P}(\R^d)$ of measures with finite $p$-th moment, i.e.\
$$\mathcal{P}_{p}(\bb{R}^d)\coloneqq\Big\{\mu\in \ca{P}(\bb{R}^d), \;M_{p}(\mu):=\int_{\bb{R}^d}|y|^{p}\,\,\d \mu(y) <\infty\Big\} \, .$$
We equip $\mathcal{P}_p(\bb{R}^d)$  with the $p$-Wasserstein distance
\begin{align} 
	\ca{W}_p(\mu,\nu)\coloneqq\inf\limits_{\pi\in\mathcal{C}(\mu,\nu)}\Big(\int_{\RR^d\times \RR^d}|x-y|^p\;\pi(\d x \d y)\Big)^{\frac{1}{p}},
\end{align}
with $\mathcal{C}(\mu,\nu)$ being the set of all couplings of $\mu$ and $\nu$.

We now introduce some ideas from the theory of random dynamical systems.

Let $(E, \dist)$ be a Polish space with Borel $\sigma$-algebra $\ca{E}$ and $(\Omega,\ca{G},\PP,(\theta_t)_{t\in\RR})$ be a metric dynamical system in the sense of \cite[Appendix A, p. 535-537]{Arnold.1998.586} over the probability space $(\Omega,\ca{G},\PP)$. We now discuss random dynamical systems (RDS) 
in the sense of \cite[Definition 1.1.1]{Arnold.1998.586}. Since our applications are to RDS generated by SDEs driven by Brownian motion, we will assume that the RDS is adapted to a filtration and is of white-noise type. Namely,  we have a family $\bb{F}=(\ca{F}_{s,t})_{-\infty<s\leq t< \infty}$ of sub-$\sigma$ algebras of $\ca{F}$, the $\mathbb{P}$-completion of $\ca{G}$, such that 

(1) $\ca{F}_{t,u}\subseteq \ca{F}_{s,v}$ whenever $s\leq t\leq u\leq v$;

(2) $\theta_r^{-1}(\ca{F}_{s,t})=\ca{F}_{s+r,t+r}$ for all $r,s,t$;

(3) $\ca{F}_{s,t}$ and $\ca{F}_{u,v}$ are independent  whenever $s\leq t\leq u\leq v$.

\noindent For each $t\in\RR$, we  denote by  $\ca{F}_{t}$ the smallest $\sigma$-algebra containing all $\ca{F}_{s, t}, s\leq t$,  and by $\ca{F}_{t,\infty}$ the smallest $\sigma$-algebra containing all $\ca{F}_{t, u}, t\leq u$. Note that for each $t\in\RR$, the $\sigma$-algebra $\ca{F}_{t}$ and $\ca{F}_{t,\infty}$ are independent. We will assume further that for each $t\geq 0$ and $y\in E$, $\varphi(t,\cdot, y)$ is $\ca{F}_{0,t}$-measurable. The collection $(\Omega, \ca{G}, \bb{F},\bb{P},(\theta_t)_{t\in\RR},\varphi)$ is then called a \emph{white-noise RDS}. As a short-hand, we will simply call $\varphi$ a white-noise RDS.

Given an  RDS $\varphi$, we define the associated \emph{skew-product flow} by 
\begin{align}
		\Theta_t: &\Omega \times E\longrightarrow \Omega \times E\\
&(\omega, y)\longmapsto (\theta_t \omega, \varphi(t, \omega, y))\,.
\end{align}
 An invariant measure for an RDS $\varphi$ is a probability measure on $\Omega \times E$ whose marginal on $\Omega$ is $\mathbb{P}$ and which is invariant under $\Theta_t$ for all $t \geq 0$. Equivalently, $\mu$ is invariant for $\varphi$ if and only if $\varphi(t,\omega,\cdot)\mu_{\omega}= \mu_{\theta_t \omega}$ for all $t \geq 0$ and $\mathbb{P}$-almost every $\omega \in \Omega$, where $\omega \mapsto \mu_\omega$ is the unique disintegration of a probability measure $\mu$ on $\Omega \times E$ with marginal $\mathbb{P}$ on $\Omega$ (cf. \cite[Proposition 3.6]{Crauel2002Randommeasures}). An invariant measure $\mu_\omega$ is said to be a Markov  measure  if $\omega \mapsto \mu_\omega$ is measurable with respect to the past $\mathcal{F}_0$. For a white-noise RDS, we can define the associated Markov semigroup by
 \begin{align}
 (P_t f) (y) \coloneqq\EE [f(\varphi(t, \cdot, y))] \,
 \end{align}
 for all bounded, measurable functions $f$. Note , if $E$ is a Polish space, there is a one-to-one correspondence between invariant measures of $P_t$ and Markov invariant measures of $\varphi$ (\cite{Crauel1991Markovmeasures}): Let $\rho$ be $P_t$-invariant, then for any sequence $t_k \to \infty $ \footnote{The weak limit does not depend on the sequence $t_k$.}, the weak limit, i.e. the limit obtained when testing against bounded, continuous functions, 
\begin{align}
\mu_\omega\coloneqq \lim_{t_k \to \infty} \varphi(t_k, \theta_{-t_k}\omega, \cdot) \rho\,,
\label{eq:Markovmeasure}
\end{align}
exists for $\mathbb{P}$-a.s. $\omega \in \Omega$ and defines a Markov invariant measure for $\varphi$. Conversely, the measure $ \EE[\mu_{\cdot}]$ defines an invariant probability measure for  $P_t$.  

We now introduce some more dynamical concepts which will play a role in our proofs and then introduce the definition of weak synchronisation.
\begin{definition}[Strong mixing]\label{def:stronglymixing}
A Markov semigroup $(P_t)_{t\geq 0}$ with  the invariant measure $\rho$ is said to be \emph{strongly mixing} if, for all bounded, continuous functions $f$ and all $y\in E$,
\begin{align}
    P_tf(y)\to \int_E f(z) \,\d \rho(z)\,,\,\,\,\,\,\,\,\,\,\,\,\,\,\,\,{\rm{as}} \,\,\,t\to\infty\,.
\end{align}
 Similarly, we say that a white-noise RDS $\varphi$ is strongly mixing with respect to  the invariant measure $\rho$ if the law of $\varphi(t, \cdot, y)$ converges weakly to $\rho$ for all $y \in E$ as $t\to\infty$.
\end{definition}

\begin{definition}[$M$-Attractors]
	\label{def:attractor}
We say that a family of non-empty subsets $\{A(\omega)\}_{\omega \in \Omega}$  of $M\subseteq E$ is
\begin{enumerate}[i)]
\item \emph{Random closed (resp. compact)} if $A(\omega)$ is $\mathbb{P}$-a.s. closed (resp. compact) and the map $\omega \mapsto {\mathrm{dist}}(y, A(\omega))\coloneqq \inf_{a\in A(\omega)} {\rm{dist}}(y,a)$, is $\mathcal{F}$-measurable for every $y \in M$.
\item \emph{$\varphi$-invariant} if for all $t \geq 0$ and $\mathbb{P}$-a.s. $\omega\in\Omega$,
\begin{equation}
\varphi(t,\omega,A(\omega)) = A(\theta_t \omega) \, .
\end{equation}
\item A \emph{point $M$-attractor} if $A$ is random compact, $\varphi$-invariant, and for every $y\in M$, 
\begin{equation}\label{def:attractionconvergence}
\lim_{t \to \infty}  {\rm{dist}}(\varphi(t,\theta_{-t}\omega, y),A(\omega)) =0\,,\,\,\,\,\,\,\,\,\,\,\,\,\,\,\,\,\,\,\,\,\PP\text{-a.s.}
\end{equation}
A point  $M$-attractor is referred to as a \emph{weak point $M$-attractor}  if the above convergence holds in probability. A weak point $M$-attractor is said to be \emph{minimal} if it is contained in every weak point $M$-attractor.
\end{enumerate}
\end{definition}
In particular, if $M = E$, we simply drop the prefix $M$  from all the above definitions. We are now in a position to introduce the definition of weak synchronisation.

\begin{definition}[Weak synchronisation and conditional weak synchronisation]\label{def:weaksynch}
Consider a white-noise RDS $\varphi$. We say that \emph{weak synchronisation} occurs if there exists a %minimal 
weak point attractor $A(\cdot)$ which is   $\mathbb{P}$-a.s. a singleton. We say that \emph{conditional weak synchronisation} occurs on a subset $M\subsetneq E$ if there exists a %minimal 
weak point $M$-attractor $A(\cdot)$ which is   $\mathbb{P}$-a.s. a singleton. 
\end{definition}

\section{Weak synchronisation for multiplicative SDEs} \label{sec3:weaksynchroforSDE}
In this section, we focus on proposing sufficient conditions for weak synchronisation of RDS generated by SDEs with multiplicative noise. We first present general conditions for weak synchronisation at the level of abstract white-noise RDS, which serve as generalisations of the results  \cite[Theorem 2.23]{FlandoliGessScheutzow.2017.PTRF511}. We then apply these results to SDEs with multiplicative noise, and provide checkable conditions in terms of the coefficients of the SDEs. 
\subsection{Weak synchronisation at the level of RDS}
 Since we will ultimately be concerned with RDS stemming from SDEs driven by Brownian motion, we focus on white-noise RDS $(\Omega,\mathcal{G},\mathbb{F},\mathbb{P},(\theta_t)_{t\in \R},\varphi)$ with continuous trajectories. We  first introduce some notation. We denote by $\Delta$ the diagonal of the product space $E \times E$, i.e. 
 \begin{align}\label{def:Delta}
 	\Delta \coloneqq \{(y,y): y \in E\}\,.
 \end{align}
For all $\varepsilon>0$, we  define 
\begin{align}
\Delta_{ \varepsilon} &\coloneqq\{(y_1, y_2)\in E \times E: \dist(y_1, y_2) \leq  \varepsilon\}\,,
\end{align}
and  for any $a\in E$, denote by 
\begin{align}
\Delta_{a, \varepsilon} &\coloneqq \{(y_1, y_2)\in E \times E: \dist(y_1, a)+\dist(y_2, a) \leq \varepsilon\}\,.
\end{align}
We also need to define the following stopping times, for any $y_1, y_2\in E$,
\begin{align}
\tau_{(y_1,y_2), \varepsilon}(\cdot)&:= \inf\{t \geq 0: (\varphi(t,\cdot,y_1),\varphi(t,\cdot,y_2)) \in \Delta_{\varepsilon} \} \, ,\\[1mm]
\tau_{ (y_1,y_2), \varepsilon}^{a}(\cdot)&\coloneqq \inf\{t \geq 0: (\varphi(t,\cdot,y_1),\varphi(t,\cdot,y_2)) \in \Delta_{a, \varepsilon}\} \, .\label{def:stoppingtimea}
\end{align}

\begin{assumption}[$\Delta_{a,\varepsilon}$ is reachable]\label{assum:reachability}
For all  $\varepsilon>0$ and   $a\in E$,   every $(y_1, y_2) \in E\times E$,
\begin{equation}\label{ass:reachability}
\mathbb{P}\big(\tau_{(y_1, y_2), \varepsilon}^{a} < \infty\big) > 0 \, . \worknote{two-point motion can reach the $\varepsilon$-ball before any given time $T$ in a positive probability}
\end{equation}
\end{assumption}
Loosely speaking, the above assumption asserts that two trajectories of $\varphi$ starting from different initial points $y_1,y_2$ enter a small neighbourhood of the diagonal with positive probability, though not necessarily with probability one.
\begin{definition}[Weak asymptotic stability]\label{def:Weak asymptotic stability}
	Let $U\subset  E$ be a deterministic non-empty open set.  We say that $\varphi$ is \emph{weakly asymptotically stable} on $U$ if there exists a deterministic sequence $t_n\uparrow \infty$ and a set $\ca{M}\subseteq \Omega$ of positive $\PP$-measure, such that for all $x,y\in U$,
	\begin{align}
		\lim_{n\to\infty}{\bf{1}}_{\ca{M}}(\cdot){\rm{dist}}\big(\varphi(t_n,\cdot,x),\varphi(t_n,\cdot,y) \big)= 0\,,\,\,\,\,\,\,\text{in\,\, probability} .
	\end{align}
\end{definition}

\begin{theorem}\label{thm:weaksynchronisationRDS}
Assume that the white-noise RDS $\varphi:$
	\begin{thmenumerate}[label=\upshape(\roman*)]
 \item \label{thmassum:weaksynchronisationRDS1} is strongly mixing with respect to the invariant probability measure $\rho$;
 \item \label{thmassum:weaksynchronisationRDS2} is weakly asymptotically stable on some open set $U$ with $\rho(U)>0$;
 \item \label{thmassum:weaksynchronisationRDS3} satisfies~\cref{assum:reachability}. \end{thmenumerate}
  Then, there is a %minimal 
  weak point attractor consisting of a single random point $a(\omega)$ and 
  \begin{equation}
  	A(\omega)={\rm{supp}}(\mu_{\omega})=\{a(\omega)\},\,\,\,\,\,\PP-a.s.,
  \end{equation}
  where 
  \begin{equation}
  	\mu_{\omega}=\lim_{k\to\infty} \varphi(t_k, \theta_{-t_k}\omega)\rho,
  \end{equation}
  i.e. $\varphi$ exhibits weak synchronisation in the sense of~\cref{def:weaksynch}. 
\end{theorem}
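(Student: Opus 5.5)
The strategy follows \cite[Theorem 2.23]{FlandoliGessScheutzow.2017.PTRF511}. Strong mixing gives a unique invariant measure $\rho$ and, via \eqref{eq:Markovmeasure}, a Markov invariant measure $\mu_\omega$. The first step is to prove the general fact that, under strong mixing, $A(\omega)\coloneqq\operatorname{supp}(\mu_\omega)$ is the minimal weak point attractor. Strong mixing gives $\Law(\varphi(t,\theta_{-t}\cdot,y))=\Law(\varphi(t,\cdot,y))\to\rho=\EE[\mu_\cdot]$ weakly. Combined with $\varphi(t,\theta_{-t}\omega,\cdot)\rho\to\mu_\omega$ and the pullback representation, this yields ${\rm dist}(\varphi(t,\theta_{-t}\omega,y),\operatorname{supp}\mu_\omega)\to0$ in probability. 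The argument tests against $f_\delta(\omega,z)=\min(1,\delta^{-1}{\rm dist}(z,\operatorname{supp}\mu_\omega))$ and uses that $\omega\mapsto\mu_\omega$ is $\ca F_0$-measurable while $\varphi(t,\theta_{-t}\omega,y)$ depends on $\ca F_{-t,0}$; by the white-noise property the joint law converges to $\PP(\d\omega)\mu_\omega(\d z)$. Invariance $\varphi(t,\omega,A(\omega))=A(\theta_t\omega)$ follows from invariance of $\mu_\omega$ and continuity of $\varphi$; compactness follows as in \cite{FlandoliGessScheutzow.2017.PTRF511}.

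The core of the proof is showing that $\mu_\omega$ is a.s.\ a Dirac mass. Consider the two-point motion $(\varphi(t,\cdot,y_1),\varphi(t,\cdot,y_2))$ and the random variable
\begin{equation}
D(\omega)\coloneqq\int\!\!\int \min(1,{\rm dist}(z_1,z_2))\,\mu_\omega(\d z_1)\mu_\omega(\d z_2).
\end{equation}
The plan is to show $D=0$ a.s. The product measure $\mu_\omega\otimes\mu_\omega$ is an invariant measure of the two-point RDS, and $\rho^{(2)}\coloneqq\EE[\mu_\cdot\otimes\mu_\cdot]$ is invariant for the two-point semigroup. I will argue that $\rho^{(2)}(\Delta^c)=0$.

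\textbf{Key step (expected main obstacle).} The contraction from (ii) must be combined with reachability (iii) to get a two-point statement that holds for every starting pair, not only for pairs inside $U$. Since $\rho(U)>0$ and $\rho$ is the one-point marginal, $U$ is open, and any neighbourhood of any $a\in\operatorname{supp}\rho\cap U$ has positive $\rho$-mass, we can pick $a\in U$ and $\varepsilon$ small with $B(a,\varepsilon)\subset U$. Then $\Delta_{a,\varepsilon}\subset U\times U$. By \cref{assum:reachability}, every pair enters $\Delta_{a,\varepsilon}$ with positive probability, say before some deterministic time $T$. Using the strong Markov (cocycle) property at $\tau^a\wedge T$ together with the independence of $\ca F_{0,T}$ and $\ca F_{T,\infty}$, and applying (ii) on $\ca M$ shifted, we obtain for every $(y_1,y_2)$ and every $\delta>0$
\begin{equation}
\liminf_{n}\PP\big({\rm dist}(\varphi(T+t_n,\cdot,y_1),\varphi(T+t_n,\cdot,y_2))<\delta\big)\ge c(y_1,y_2)>0.
\end{equation}
A subtlety is that (ii) holds at the fixed times $t_n$ and for deterministic starting points, while here the starting points in $U$ are random and the entry time is random. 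I would handle this by discretising the entry time, or by conditioning on $\ca F_{0,\tau}$ and using that the convergence in probability in (ii) is for all $x,y\in U$, then dominated convergence. Passing from $T+t_n$ to $\tau+t_n$ is the delicate point; replacing $\tau$ by a deterministic $T$ at which the pair lies in $U\times U$ with positive probability avoids it.

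\textbf{Conclusion.} Integrate the last display against $\rho^{(2)}$ and use its invariance. This gives $\rho^{(2)}(\Delta_\delta)\ge \int c\,\d\rho^{(2)}>0$. To upgrade to $\rho^{(2)}(\Delta^c)=0$, apply the same argument to the restriction of $\rho^{(2)}$ to $\Delta_\delta^c$ and use a standard ergodic-decomposition argument. Alternatively, following \cite[Lemma 2.19]{FlandoliGessScheutzow.2017.PTRF511}, note that $D(\theta_t\omega)$ is nonincreasing in law and a submartingale-type quantity, so positive-probability contraction at every level forces $\EE D=0$. Hence $\mu_\omega=\delta_{a(\omega)}$ a.s., and by the first step $A(\omega)=\{a(\omega)\}$.
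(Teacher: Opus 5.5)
\textbf{Verdict: gap in the decisive step.} Your route differs from the paper's. The paper cites \cite[Lemma 2.19]{FlandoliGessScheutzow.2017.PTRF511} to obtain a finite attractor $\{a_1,\dots,a_N\}$ of $\ca F_0$-measurable points. It then runs reachability and stability for the random pair $(a_1,a_2)$ to get $\PP(F=0)>0$ with $F=\min_{i\neq j}\dist(a_i,a_j)$, and closes with a $0$--$1$ argument. You instead aim to show that $\rho^{(2)}=\EE[\mu_\cdot\otimes\mu_\cdot]$ is carried by $\Delta$.

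Your key step is fine. It is the paper's argument applied to deterministic pairs, and the deterministic-time device you mention is exactly what the paper does via path continuity. The step that fails is the upgrade from $\rho^{(2)}(\Delta_\delta)>0$ to $\rho^{(2)}(\Delta^c)=0$; none of the mechanisms you offer works as stated:
\begin{itemize}
\item the restriction of $\rho^{(2)}$ to $\Delta_\delta^c$ is not invariant, since mass moves in and out of $\Delta_\delta$, so ``the same argument'' cannot be applied to it;
\item no ergodic decomposition is actually specified;
\item $D\circ\theta_t$ has the same law as $D$ for every $t$, so there is no monotonicity or submartingale structure to exploit.
\end{itemize}
There is also a smaller gap in ``integrate against $\rho^{(2)}$ and use invariance'': your time $T$ depends on $(y_1,y_2)$, whereas invariance holds only at a fixed time.

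\textbf{The fix: restrict to $\Delta^c$, not $\Delta_\delta^c$.}
\begin{itemize}
\item \emph{Invariance of the restriction.} $\Delta$ is forward invariant for the two-point motion. Write $\rho^{(2)}=\rho^{(2)}|_\Delta+\nu$ with $\nu\coloneqq\rho^{(2)}|_{\Delta^c}$. Comparing the mass of $\Delta$ on both sides of $\rho^{(2)}P^{(2)}_t=\rho^{(2)}$ gives $\nu P^{(2)}_t(\Delta)=0$. Hence $\nu P^{(2)}_t=(\rho^{(2)}P^{(2)}_t)|_{\Delta^c}=\nu$, so $\nu$ is invariant.
\item \emph{$\delta$-independence.} Your constant $c(y_1,y_2)=\PP(\ca M)\,\PP\big((\varphi(T,\cdot,y_1),\varphi(T,\cdot,y_2))\in U\times U\big)$ does not depend on $\delta$.
\item \emph{A common time.} The map $t\mapsto\PP\big((\varphi(t,\cdot,y_1),\varphi(t,\cdot,y_2))\in U\times U\big)$ is lower semicontinuous (continuous paths, $U\times U$ open, Fatou). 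It is positive somewhere by reachability plus path continuity. So each pair admits a rational $T$, and by countable additivity there are a rational $T_0$ and a set $G$ with $\nu(G)>0$ on which $T_0$ works.
\item \emph{Contradiction.} Invariance gives $\nu(\Delta_\delta)=\nu P^{(2)}_{T_0+t_n}(\Delta_\delta)\ge\int_G P^{(2)}_{T_0+t_n}(y,\Delta_\delta)\,\nu(\d y)$. Fatou and then $\delta\downarrow0$ give $\nu(\Delta)\ge\int_G c\,\d\nu>0$ whenever $\nu\neq0$, contradicting $\nu(\Delta)=0$.
\item \emph{Conclusion.} Hence $\EE[(\mu_\cdot\otimes\mu_\cdot)(\Delta)]=1$. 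Since $(\mu_\omega\otimes\mu_\omega)(\Delta)=\sum_x\mu_\omega(\{x\})^2$, $\mu_\omega$ is a.s.\ a Dirac mass.
\end{itemize}

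With this repair your argument is self-contained. It bypasses the finite-attractor lemma and the $0$--$1$ law, and it does not even use $\rho(U)>0$.

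One ordering point remains. Compactness of $\operatorname{supp}\mu_\omega$ does not follow from strong mixing alone; in the paper it comes from the finiteness in \cite[Lemma 2.19]{FlandoliGessScheutzow.2017.PTRF511}. In your route it is automatic once $\mu_\omega$ is a Dirac, so state the attractor conclusion of your first step only after the core step.
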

\begin{remark}\label{rmk:additivestrongtransitivity}
	In contrast to \cite[Theorem 2.23]{FlandoliGessScheutzow.2017.PTRF511}, we replace the condition of pointwise strong swift transitivity by the reachability condition \eqref{ass:reachability}. The key observation is that pointwise strong swift transitivity is verifiable for SDEs with additive noise, since the two-point motion is driven by the same noise, which acts identically on both trajectories. For SDEs with multiplicative noise, however, the diffusion coefficients depend on the state, so that the same noise acts differently on the two trajectories. 
In \cref{subsubsec:reachability}, we will use the H\"{o}rmander condition to verify \cref{assum:reachability}.
\end{remark}

\vspace{3mm}
For notational simplicity, throughout the proof we write $\varphi_t(\omega,y)$ for $\varphi(t,\omega,y)$.

\begin{proof}
	
	 Because $\varphi$ is strongly mixing and  weakly asymptotically stable on an open set $U$ with $\rho(U)>0$, it follows from \cite[Lemma 2.19]{FlandoliGessScheutzow.2017.PTRF511} that there are $\ca{F}_0$-measurable random variables $a_i(\cdot), i=1,...,N$, such that 
\[
A(\omega):=\{a_i(\omega):i=1,...,N\}
\]
is a minimal weak point attractor. It remains to show that, $A(\cdot)$ is a singleton \( \mathbb{P} \)-a.s. i.e. $N=1$. 

 Let $a_1(\cdot), a_2(\cdot)$ be two different  $\ca{F}_0$-measurable elements. By weak asymptotic stability on an  open set  $U$, we have a sequence  $t_n \to \infty$  and a $\delta_0>0$, such that for all $y_1,y_2\in U$ and $\eta>0$,
\begin{align}\label{es:weak-asymptotic-stability}
	\liminf_{n\to\infty}\PP\big({\rm{dist}} (\varphi_{t_n}(\cdot,y_1),\varphi_{t_n}(\cdot,y_2))\leq \eta\big)\geq \delta_0>0.
\end{align}
Without loss of generality, we may assume  
$$U = B( x_0, \varepsilon_0)\subseteq E\,,$$  for some \( x_0 \in E \), \( \varepsilon_0 > 0 \).

We show the existence of a deterministic time $t_0$ at which two trajectories are close enough. Consider $\bar{B}( x_0, \varepsilon_0/2)\times \bar{B}( x_0, \varepsilon_0/2)\subset E\times E$. Using the independence of $\ca{F}_0$ and $\ca{F}_{0,\infty}$ and tower property of conditional expectation, we know that
\begin{align}
\PP\big(\omega\in \Omega: \tau_{(a_1(\omega), a_2(\omega)), \varepsilon_0/2}^{x_0}<\infty\big)=\,&\EE\Big[\EE\big[{\bf{1}}_{ \{  \tau_{(a_1(\omega), a_2(\omega)), \varepsilon_0/2}^{x_0} <\infty \}  } | \ca{F}_0\big] \Big]\\
=\,&\EE\Big[\EE\big[{\bf{1}}_{ \{  \tau_{(y_1, y_2), \varepsilon_0/2}^{x_0}<\infty  \}  } \big]\Big|_{y_1=a_1(\omega), y_2=a_2(\omega)} \Big]\\
>\,&0\,,
\end{align}
where we use the \cref{assum:reachability} in the last step. 
Using the continuity of the trajectories, there is an \(\ell : \Omega \to \mathbb{R}_+\) such that
\begin{align}
\PP\Big(\omega\in \Omega:\big(\varphi_{\tau_{(a_1(\omega), a_2(\omega)), \varepsilon_0/2}^{x_0}(\omega)+s}(&\omega, a_1(\omega)), \varphi_{\tau_{(a_1(\omega), a_2(\omega)), \varepsilon_0/2}^{x_0}(\omega)+s}(\omega, a_2(\omega))\big) \\
&\in B(x_0, \varepsilon_0)\times B(x_0, \varepsilon_0) ,\,s \in [0, \ell(\omega)]\Big)>0\,.\label{3epsilon}
\end{align}
We consider a countable family of balls $\{B(r_m, R_m)\}_{m \in \mathbb{N}}$ covering $\mathbb{R}_+$, where $r_m \in \mathbb{Q}$ and $R_m \in \mathbb{Q}_+$. 
Considering that 
\begin{align}
	&\sum_{i=1}^m\PP\Big(\omega\in \Omega:\big(\varphi_t(\omega, a_1(\omega)), \varphi_{t}(\omega, a_2(\omega))\big) \in B(x_0, \varepsilon_0)\times B(x_0, \varepsilon_0) ,\,t \in B(r_i, R_i)\Big)\\
	\geq\,\,&  \PP\Big(\omega\in \Omega:\big(\varphi_{\tau_{(a_1(\omega), a_2(\omega)), \varepsilon_0}^{x_0}(\omega)+s}(\omega, a_1(\omega)), \varphi_{\tau_{(a_1(\omega), a_2(\omega)), \varepsilon_0}^{x_0}(\omega)+s}(\omega, a_2(\omega))\big) \\
	&\,\,\,\,\,\,\,\,\,\,\,\,\,\,\,\,\,\,\,\,\,\,\,\,\,\,\,\,\,\,\,\,\,\,\,\,\,\,\,\,\,\,\,\,\,\,\,\,\in B(x_0, \varepsilon_0)\times B(x_0, \varepsilon_0) ,\,s \in [0, \ell(\omega)]\Big)\\
	>\,\,&0\,,
\end{align}
we obtain that  there is a deterministic time \( t_0 \geq 0 \) such that
\begin{align}
\delta\coloneqq\PP\Big(\omega\in \Omega:\big(\varphi_{t_0}(& \,\omega, a_1(\omega)), \varphi_{t_0}(\omega, a_2(\omega))\big) \in B(x_0, \varepsilon_0)\times B(x_0, \varepsilon_0) \Big)>0\,.\label{es:uniformdelta}
\end{align}

Notice that $\varphi_{t_0}\big( \cdot, a_1(\cdot)\big), \varphi_{t_0}\big( \cdot, a_2(\cdot)\big)\in \ca{F}_{t_0}$, $\varphi_{t_n}(\theta_{t_0}\cdot, y)\in \ca{F}_{t_0, t_0+t_n}$ for every $y\in E$, the $\sigma$-algebras 
$\ca{F}_{t_0}$ and $\ca{F}_{t_0, +\infty}$ are  independent.
 Using the  properties of cocycle and  conditional expectation, Fatou's lemma implies that  for all $\eta>0$,
\begin{align}            
&\liminf_{n \to \infty} \mathbb{P}\Big(\dist\big(\varphi_{t_0 +  t_n}(\cdot, a_1(\cdot)), \varphi_{t_0 +  t_n}(\cdot, a_2(\cdot))\big)\leq \eta \Big)\\
\geq \,\,\,&\liminf_{n \to \infty} \mathbb{P}\Big(\big\{\big(\varphi_{t_0}(\cdot, a_1(\cdot)), \varphi_{t_0}(\cdot, a_2(\cdot))\big) \in \Delta_{x_0, \varepsilon} \big\} \\
&\,\,\,\,\,\,\,\,\,\,\,\,\,\,\,\,\,\,\,\,\,\,\,\,\,\,\,\,\,\,\,\,\,\cap \big\{\dist\big(\varphi_{t_0 +  t_n}(\cdot, a_1(\cdot)), \varphi_{t_0 +  t_n}(\cdot, a_2(\cdot))\big)\leq \eta\big\} \Big)\\
 =\,\,\,& \liminf_{n \to \infty}\EE\Big[ {\bf{1}}_{\varphi_{t_0}\big( \cdot, \{a_1(\cdot), a_2(\cdot)\}\big) \in \Delta_{x_0, \varepsilon}},\\
 &\,\,\,\,\,\,\,\,\,\,\,\,\,\,\,\,\,\,\,\,\,\,\,\,\,\,\,\,\,\,\,\,\mathbb{P}\Big(\dist\big(\varphi_{t_n}(\theta_{t_0}\cdot,\varphi_{t_0}(\cdot, a_1(\cdot))), \varphi_{t_n}(\theta_{t_0}\cdot, 
                        \varphi_{t_0}(\cdot, a_2(\cdot)))\big) \leq \eta\Big|\ca{F}_{t_0} \Big)\Big]\\
                          =\,\,\,& \liminf_{n \to \infty}\EE\Big[{\bf{1}}_{\varphi_{t_0}\big(\cdot, \{a_1(\cdot), a_2(\cdot)\}\big) \in \Delta_{x_0, \varepsilon}},\\ &\,\,\,\,\,\,\,\,\,\,\,\,\,\,\,\,\,\,\,\,\,\,\,\,\,\,\,\,\,\,\,\,\mathbb{P}\Big(\dist(\varphi_{t_n}(\theta_{t_0}\cdot, x),\varphi_{t_n}(\theta_{t_0}\cdot, y))\leq \eta\Big)\Big|_{x=\varphi_{t_0}(\cdot, a_1(\cdot)),y=\varphi_{t_0}(\cdot, a_2(\cdot))}\Big]\\
                         \geq\,\,\,&\EE\Big[{\bf{1}}_{\varphi_{t_0}\big(\cdot, \{a_1(\cdot), a_2(\cdot)\}\big) \in \Delta_{x_0, \varepsilon}},\\ &\,\,\,\,\,\,\,\,\,\,\,\,\,\,\,\,\,\,\,\,\,\,\,\,\,\,\,\,\,\,\,\, \liminf_{n \to \infty}\mathbb{P}\Big(\dist(\varphi_{t_n}(\theta_{t_0}\cdot, x),\varphi_{t_n}(\theta_{t_0}\cdot, y))\leq \eta\Big)\Big|_{x=\varphi_{t_0}(\cdot, a_1(\cdot)),y=\varphi_{t_0}(\cdot, a_2(\cdot))}\Big] \,.\label{es:t0tn}
\end{align}
Since $\theta_t$ is measure-preserving for every $t\geq0$, it follows from \eqref{es:weak-asymptotic-stability} and \eqref{es:uniformdelta} that \eqref{es:t0tn} can be rewritten and estimated as follows:
\begin{align}
\,\,\,& \EE\Big[{\bf{1}}_{\varphi_{t_0}\big(\theta_{-t_0}\cdot, \{a_1(\theta_{-t_0}\cdot), a_2(\theta_{-t_0}\cdot)\}\big) \in\Delta_{x_0, \varepsilon}},\\ &\,\,\,\,\,\,\,\,\,\,\,\,\,\,\,\,\,\,\liminf_{n \to \infty}\mathbb{P}\Big(\dist(\varphi_{t_n}(\cdot,x), \varphi_{t_n}(\cdot, y)) \leq \eta\Big)\Big|_{x=\varphi_{t_0}(\theta_{-t_0}\cdot, a_1(\theta_{-t_0}\cdot)),y=\varphi_{t_0}(\theta_{-t_0}\cdot, a_2(\theta_{-t_0}\cdot))}\Big]\\
	\geq \,\,\,& \delta_0\delta\,.\label{t0t1tn}
\end{align}

\vspace{1mm}
We now show that $A(\cdot)$ is a $\mathbb{P}$-a.s. singleton.  Define \begin{align}\label{positivityofF}
    F(\omega) \coloneqq \min_{i, j = 1, \dots, N, i \neq j} {\rm{dist}}(a_i(\omega), a_j(\omega))\,.    
    \end{align}
It follows from the invariance property   
$\varphi_t(\omega, A(\omega)) = A(\theta_t \omega)\,,$
that for all $t\geq 0$,
\begin{align}
    F(\theta_t \omega) &=\, \min_{i, j = 1, \dots, N, i \neq j} {\rm{dist}}(a_i(\theta_t \omega), a_j(\theta_t \omega)) \\
    &=\, \min_{i, j = 1, \dots, N, i \neq j} {\rm{dist}}(\varphi_t(\omega, a_i(\omega)), \varphi_t(\omega, a_j(\omega)))\\
    &\leq\, {\rm{dist}}(\varphi_t(\omega, a_1(\omega)), \varphi_t(\omega, a_2(\omega))).
\end{align}
Using the measure-preserving property of $\theta$, \eqref{es:t0tn} and  \eqref{t0t1tn}, we obtain that for all \(\eta > 0\),
\begin{align}
   \mathbb{P}(F(\cdot) \leq \eta) &= \,\, \lim_{n\to\infty}\mathbb{P}(F(\theta_{t_0 + t_n} (\cdot)) \leq \eta) \\
    &\geq \,\, \liminf_{n\to\infty}\mathbb{P}\Big({\rm{dist}}\big(\varphi_{t_0 + t_n}(\cdot, a_1(\cdot)), \varphi_{t_0 + t_n}(\cdot, a_2(\cdot))\big) \leq \eta\Big)\\
    &\geq \,\,\delta_0\delta\,,
    \end{align}
    which implies 
\begin{align}\label{es:positivityofF}
	\PP(\omega\in \Omega:F(\omega)=0)=\bigcap_{n\geq 1}\PP\Big(\omega\in \Omega:F(\omega)\leq \frac{1}{n}\Big)=\lim_{n\to\infty}\PP\Big(\omega\in \Omega:F(\omega)\leq \frac{1}{n}\Big)\geq \delta_0\delta\,.
\end{align}
It follows from  $\varphi_t(\theta_{-t}\omega,A(\theta_{-t}\omega))=A(\omega)\,$
 that ${\rm{diam}}(A(\omega))=0$ provided that ${\rm{diam}}(A(\theta_{-t}\omega))=0$\,. Then $\PP$-a.s.
\begin{align}
	\{\omega\in \Omega:F(\theta_{-t}\omega)=0\}=\,\,&\big\{\omega\in \Omega: \min_{i, j = 1, \dots, N, i \neq j} \dist(a_i(\theta_{-t}\omega), a_j(\theta_{-t}\omega))=0\big\}\\
	\subseteq\,\,&\big\{ \omega\in \Omega: \min_{i, j = 1, \dots, N, i \neq j} \dist( a_i(\omega), a_j(\omega))=0\big\}\\
	=\,\,&\{\omega\in \Omega:F(\omega)=0\}\,.
\end{align}
Since $\theta_t$ is $\PP$ invariant, these events have the same $\PP$-mass and thus coincide almost surely. Note that $\{\omega:F(\theta_{-t}\omega)=0\}$ is $\ca{F}_{-t}$-measurable, hence 
\begin{align}
	\{\omega\in \Omega: F(\omega)=0\}\in \bigcap_{t\leq 0}\ca{F}_{t}\,.
\end{align}
Therefore, Kolmogorov's  $0$-$1$ law \footnote{Given a probability space $(\Omega, \ca{F}, \PP)$ and a sequence of independent events $A_1, A_2,\ldots\in\ca{F}$ with the tail field defined as 
	$$\ca{F}_{n, \infty}\coloneqq\cap_{n=1}^{\infty}\sigma(A_n, A_{n+1}, A_{n+2}, \ldots)\,.$$
	If $A\in \ca{F}_{n, \infty}$, then $\PP(A)=\{0, 1\}.$	} and  \eqref{es:positivityofF} imply that
\begin{align}
	\PP(\omega\in \Omega:F(\omega)=0)=1\,,
\end{align}
which completes the proof.
\end{proof}

\subsection{Weak synchronisation for multiplicative SDEs}\label{sec:synchr_SDE}

\cref{thm:weaksynchronisationSDE} provides three sufficient conditions: i.e. strong mixing, weak asymptotic stability, and reachability to guarantee weak synchronisation. These three conditions are formulated at the level of an abstract RDS. 

 In this section, we focus on multiplicative SDEs and identify classes of drift and diffusion coefficients such that the generated RDS  satisfies the above three conditions.
 
 \vspace{1mm}
  To begin with, we first introduce the probability space we work with and the white-noise random dynamical system associated with the multiplicative SDEs driven by Brownian motion.

\subsubsection{Generation of RDS for multiplicative SDEs}\label{sec:generationRDSforSDEs}

Assume that $b\in C^{1, 1}(\RR^d;\RR^d)$  and $ \sigma\in C^{1, 1}(\RR^d;\RR^{d\times m})$. 	Consider the multiplicative SDE
\begin{align}\label{eq:multiSDE00}
	\d Y_t=b(Y_t)\, \d t+\sigma(Y_t)\,\d W_t\,,
\end{align}
where $(W_t)_{t\geq 0}$ is an $m$-dimensional Brownian motion.

Let $(\bar{\Omega},\bar{ \ca{G}}, \bar{\PP}, (\bar{\theta}_t)_{t\in\RR})$ be the canonical metric dynamical system describing $\RR^m$-valued Brownian motion $W_t(\omega)\coloneqq \omega(t)$ as discussed in \cite[Appendix B]{Arnold.1998.586}. We consider 
\begin{equation}
	\bar{\Omega}=C_0(\RR; \RR^m)\coloneqq \{\omega\in C(\RR; \RR^m): \omega(0)=0\}\,,
\end{equation}
equipped with the compact-open topology and the associated Borel $\sigma$-algebra $\bar{\ca{G}}$, the shift
\begin{equation}
\bar{\theta}_t: \bar{\Omega}\to \bar{\Omega}, \,\,\,\,\,\bar{\theta}_t\omega(s)\coloneqq \omega(s+t)-\omega(t),\,\,\,s, t\in\RR,
\end{equation}
and $\bar{\PP}$ is the Wiener measure. Set $\bar{\ca{F}}$ to be the $\bar{\PP}$-completion of the Borel $\sigma$-algebra $\bar{\ca{G}}$. \footnote{Note that $(t, \omega)\mapsto \theta_t\omega$ is not $(\ca{B}(\RR)\otimes \bar{\ca{F}}, \bar{\ca{F}})$-measurable, but $(\ca{B}(\RR)\otimes \bar{\ca{G}}, \bar{\ca{G}})$-measurable.} Furthermore, we define a filtration $\bar{\bb{F}}\coloneqq (\bar{\ca{F}}_{s, t})_{-\infty<s\leq t<\infty}$, where 
\begin{equation}
	\bar{\ca{F}}_{s, t}\coloneqq \sigma(W_u-W_v: s\leq v\leq u \leq t)\vee \ca{N}, \,\,\,\,\,-\infty<s\leq t<\infty\,,
\end{equation}
and $\ca{N}$ are the null sets of $\bar{\ca{F}}$. It follows from \cite[Theorem 2.3.40]{Arnold.1998.586} that there is a $C^{1, \delta}$, $\delta<1$,  white-noise RDS $(\bar{\Omega}, \bar{\ca{G}}, \bar{\bb{F}}, \bar{\PP}, (\bar{\theta}_t)_{t\in \RR}, \bar{\varphi})$ corresponding to \eqref{eq:multiSDE00}. Moreover, for each $t\in [0, \infty)$ and $\omega\in \bar{\Omega}$, $\bar{\varphi}(t, \omega)$ is a $C^1$ diffeomorphism from  $\RR^d$ to $\RR^d$ \cite[P. 98, Theorem 2.3.40 ]{Arnold.1998.586}. 

Note however that the above construction is not sufficient to construct an RDS for the McKean--Vlasov system  as was done in \cite{gess2025random}. Since the construction of the RDS for the McKean--Vlasov system relies on rough path theory,  we need to include additional second-order information in the underlying metric dynamical system. Set 
\begin{equation}
	\Omega\coloneqq \bigcap_{k=1}^\infty \Omega_k \, ,\,\,\,\,\,\ca{G}\coloneqq \bar{\ca{G}}|_{\Omega},\,\,\,\,\, \mathbb{P}\coloneqq  \bar{\mathbb{P}}|_{\Omega}\,, 
	\end{equation}
	and 
	\begin{equation}
			\theta_t: \Omega\to \Omega, \,\,\,\,\,\theta_t\omega(s)\coloneqq \omega(s+t)-\omega(t),\,\,\,s, t\in\RR,
	\end{equation}
where for all $k=1,2,\dots$,
\begin{align}
	&\Omega_k\coloneqq \bigg\{\omega\in \bar{\Omega}:(W(\omega),\mathbb{W}^{\,\rm {Strat.}}(\omega))\in \scr{C}^\alpha([-k, k];\RR^m) \text{ is well-defined}, \quad  \\ 
	&\qquad\qquad\quad\,\Big(W^n(\omega),\int_{-k}^{\cdot} W^n(\omega) \otimes \d W^n(\omega)\Big) \xrightarrow{n \to \infty } (W,\mathbb{W}^{\,\rm {Strat.}})\in \scr{C}^\alpha([-k, k];\RR^m),\\
	&\qquad\qquad\quad\text{ for all }\alpha \in (1/3,1/2)  \bigg\} \, ,
\end{align}
$W^n_t$ is the dyadic approximation for  $W_t$ and $\bar{\mathbb{P}}|_{\Omega}$ denotes the restriction of $\bar{\PP}$ to $\Omega$. 

Then $\PP(\Omega)=1$, and for any $t\geq 0$, $\theta_t: \Omega\to \Omega$ is measure-preserving and $(\ca{B}(\RR)\otimes \ca{G}, \ca{G})$-measurable  (see \cite{gess2025random} for  more details). This completes the construction of metric dynamical system $(\Omega, \ca{G}, \PP, (\theta_t)_{t\in \RR})$. In regards of the filtration, we now set $\ca{F}$ be the $\PP$-completion of the $\sigma$-algebra $\ca{G}$ and  $\bb{F}\coloneqq (\ca{F}_{s, t})_{-\infty<s\leq t<\infty}$, where 
\begin{equation}
	\ca{F}_{s, t}\coloneqq \sigma(W_u-W_v: s\leq u\leq v\leq t)\vee \ca{N}, \,\,\,\,\,-\infty<s\leq t<\infty\,,
\end{equation}
and $\ca{N}$ are the null sets of $\ca{F}$. We will use this metric dynamical system as the underlying probability space for the RDS generated by the Mckean--Vlasov system \eqref{system3} in \cref{sec4:weaksynchroforMVSDE}.  

Since we will eventually make pathwise comparisons between multiplicative SDEs and McKean--Vlasov SDEs, we would like both the RDS to be defined over the same underlying metric dynamical system. Recall that  $\bar{\varphi}$ is the white-noise RDS corresponding to \eqref{eq:multiSDE00} constructed above, we define $\varphi\coloneqq \bar{\varphi}|_{\Omega}$. Then, one can show that $(\Omega,\ca{G},\bb{F},\mathbb{P},(\theta_t)_{t\in \R},\varphi)$ is also a white-noise RDS corresponding to \eqref{eq:multiSDE00}.

\subsubsection{Main result}
Now, we state the main result of this section. Some of the concepts involved will be defined later. The proof follows directly from a combination of \cref{prop:existenceLyapunovexponent}  and \cref{thm:reachability} below.
\begin{theorem}\label{thm:weaksynchronisationSDE}
Consider the RDS $\varphi$ corresponding to \eqref{eq:multiSDE00} and assume it is strongly mixing with respect to the invariant probability measure $\mu^{\infty}$ in the sense of \cref{def:stronglymixing}. 
Assume the following conditions:
\begin{thmenumerate}[label=\upshape(\roman*)]
	\item $\varphi$ satisfies  that  $\lambda_{\rm{top}}<0;$
	%$$\int_{\RR^d}\sup_{|r|=1}\left\langle r, D b\left(y\right)r\right\rangle \d \mu^{\infty}(y)+\frac{1}{2}\int_{\RR^d}\sup_{|r|=1}\big|D\sigma(y)r\big|^2 \d \mu^{\infty}(y)<0;$$
	
\item $b,  \sigma$ are smooth with  bounded derivatives up to order $2$ and the vector fields $U_1, U_2,\ldots,U_m,$ corresponding to two-point motion (see \cref{subsubsec:reachability}), satisfy H\"{o}rmander's condition for ${\bf{y}}\in \RR^{2d}\setminus \Delta$,

    \textbf{or}
\item[(ii')]  $b,  \sigma$ are smooth with  bounded derivatives up to order $2$ and the vector fields associated with  the two-point motion satisfy that $U_0$ is recurrent and $\{U_0, \ldots, U_m\}$ (see \cref{subsubsec:reachability}) fulfill the parabolic H\"{o}rmander condition on $\RR^{2d}\setminus \Delta$.
  %     \textbf{or}
%\item[(ii'')] $\varphi$ is pointwise strongly  swift transitive in the sense that there is a time $t>0$ such that for every $y_1, y_2\in \RR^d$ and every (arrival) point $y$,
%\begin{equation}
%	\PP\left( \varphi\left(t, \cdot, \{y_1, y_2\}\right)\subset B\left(y, |y_1-y_2|\right)\right)>0\,,
%\end{equation}
%and 
%\begin{equation}
%	\liminf_{t\to\infty} |\varphi(t, \omega, y_1)-\varphi(t, \omega, y_2)|=0,\,\,\,\,\bb{P}-{\rm{a.s.}}.
%\end{equation}
\end{thmenumerate}
Then, weak synchronisation occurs for $\varphi$ in the sense of \cref{def:weaksynch}.
	\end{theorem}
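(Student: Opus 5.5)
The plan is to reduce \cref{thm:weaksynchronisationSDE} to the abstract criterion \cref{thm:weaksynchronisationRDS}. Strong mixing with respect to $\mu^\infty$ is assumed, so hypothesis \ref{thmassum:weaksynchronisationRDS1} holds. It remains to verify weak asymptotic stability on an open set $U$ with $\mu^\infty(U)>0$, and the reachability \cref{assum:reachability}. The paper indicates that the first will follow from $\lambda_{\rm top}<0$ (the content of \cref{prop:existenceLyapunovexponent}) and the second from the Hörmander-type conditions (ii) or (ii') (the content of \cref{thm:reachability}).

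For weak asymptotic stability, I would start from the Furstenberg--Khasminskii/multiplicative ergodic theorem. Since $\varphi$ is a $C^{1,\delta}$ RDS with an ergodic invariant measure $\mu^\infty$ and integrable derivative bounds (from the boundedness of $Db$, $D\sigma$), the top Lyapunov exponent $\lambda_{\rm top}$ exists. Negativity $\lambda_{\rm top}<0$ then gives, via a local stable manifold (Ruelle/Pesin) argument, the following: for $\mu^\infty\otimes\PP$-a.e.\ $(y,\omega)$ there is a random radius $r(\omega,y)>0$ such that
\[
|\varphi_t(\omega,x)-\varphi_t(\omega,y)|\to 0 \quad\text{for all } |x-y|<r(\omega,y).
\]
Choosing $y_0$ in the support of $\mu^\infty$ and $r_0>0$ with $\PP(r(\cdot,y_0)>2r_0)>0$, this convergence holds on a set $\ca M$ of positive probability for all $x\in U:=B(y_0,r_0)$. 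One subtlety is that $r$ depends on the base point. To handle it, I would fix a single centre $y_0$, use Fubini to make the statement hold for a.e.\ $y_0$, and compare any two $x,y\in U$ through $y_0$ with the triangle inequality. Almost-sure convergence on $\ca M$ implies convergence in probability along any sequence $t_n$, so \cref{def:Weak asymptotic stability} holds with $\mu^\infty(U)>0$.

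For reachability, I would consider the two-point motion on $\RR^{2d}\setminus\Delta$, with generator built from $U_0=(b,b)$ (with Stratonovich correction) and $U_k=(\sigma_k,\sigma_k)$. Given a target $\Delta_{a,\varepsilon}$, pick a point $(z_1,z_2)$ with $z_1\neq z_2$ and both within $\varepsilon/2$ of $a$. Under (ii), the Chow--Rashevskii theorem applied to the driftless control system gives accessibility. Under (ii'), the parabolic Hörmander condition together with recurrence of $U_0$ gives accessibility in the presence of drift (Jurdjevic--Kupka). In either case the control system
\[
\dot{\bf y}=U_0({\bf y})+\sum_k U_k({\bf y})\dot h_k
\]
steers $(y_1,y_2)$ into a neighbourhood of $(z_1,z_2)$ in finite time. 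The Stroock--Varadhan support theorem then yields positive probability that the two-point motion follows this controlled path within a tube, hence $\PP(\tau^a_{(y_1,y_2),\varepsilon}<\infty)>0$. The case $y_1=y_2$ is trivial, since the diagonal is invariant and one-point irreducibility places the point near $a$.

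I expect the main obstacle to be the controllability step on the non-compact, non-simply-connected manifold $\RR^{2d}\setminus\Delta$. One has to ensure that the bracket-generating condition yields exact controllability rather than just accessibility, which is the role of the recurrence assumption on $U_0$ in (ii'). One also has to make sure that the controlled trajectories do not escape to infinity, which the bounded-derivative assumptions control. With both conditions verified, \cref{thm:weaksynchronisationRDS} yields weak synchronisation.
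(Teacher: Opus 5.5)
Your decomposition is the paper's: \cref{thm:weaksynchronisationSDE} follows from \cref{thm:weaksynchronisationRDS} once \cref{prop:existenceLyapunovexponent} gives weak asymptotic stability (multiplicative ergodic theorem plus Ruelle's stable-manifold theorem under $\lambda_{\rm{top}}<0$) and \cref{thm:reachability} gives reachability (geometric control plus the Stroock--Varadhan support theorem). The one substantive step you skip is the hypothesis that makes the stable-manifold theorem applicable. Bounded $Db,D\sigma$ give integrability of $\log^+\|D\varphi_1\|$, but that only yields existence of the Lyapunov spectrum. To get your random radius $r(\omega,y)$ from Ruelle's theorem (in the form of \cref{prop:asymptotic-stability}) you also need
\[
\EE\int_{\RR^d}\log^+\big\|\varphi_1(\omega,\cdot+y)-\varphi_1(\omega,y)\big\|_{C^{1,\delta}(\bar B(0,1))}\,\d\mu^\infty(y)<\infty .
\]
This is a local H\"older bound on $D\varphi_1$, uniform enough in the base point to be integrated against $\mu^\infty$ on the noncompact space $\RR^d$. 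Most of the paper's work on this part goes here. The moment bound $\sup_y\EE|D\varphi_1(u+y)-D\varphi_1(v+y)|^{2p}\le C_p|u-v|^{2p}$ needs Lipschitz continuity of $Db$ and $D\sigma$, not just boundedness. It is then turned into a H\"older-norm bound by the parameter-dependent Kolmogorov theorem \cref{prop:modifiedKolmogorov}. Without this, your $r(\omega,y)$ is not yet justified. On the other hand, centring $U$ at a $\mu^\infty$-typical point $y_0\in\operatorname{supp}\mu^\infty$ is a cleaner way than the paper's countable-cover argument to get $\mu^\infty(U)>0$, which \cref{thm:weaksynchronisationRDS} requires.

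There are three smaller points on reachability.

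\textbf{Drift under (ii).} The control system carries the drift $U_0$, so Chow--Rashevskii for the driftless system is not enough by itself. You can use the control-affine result with unbounded controls (\cref{lem:geometricalcontrollability}\,(i)), as the paper does. Alternatively, rescale time: controls $\lambda v(\lambda t)$ on $[0,1/\lambda]$ reduce the drift to $\lambda^{-1}U_0$ in rescaled time. Since you only need to land in a neighbourhood of $(z_1,z_2)$, this suffices.

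\textbf{The case $d=1$.} Here $\RR^2\setminus\Delta$ has two components, each invariant under the order-preserving flow. So $(z_1,z_2)$ must be chosen in the component of $(y_1,y_2)$.

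\textbf{The diagonal case.} It is never used, because \cref{thm:weaksynchronisationRDS} applies reachability only to distinct attractor points $(a_1(\omega),a_2(\omega))$. If you want it anyway, ``one-point irreducibility'' is not among the hypotheses. It does follow by projecting (ii) or (ii') onto one factor: bracket conditions, completeness and recurrence of $U_0$ all pass to the one-point fields.
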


In \cref{subsubsec:Lyapunov}, we show that condition (i) in~\cref{thm:weaksynchronisationSDE} implies  that  $\varphi$ is weakly asymptotically stable. We prove that \cref{assum:reachability} is implied by condition (ii) or condition (ii') of~\cref{thm:weaksynchronisationSDE} in \cref{subsubsec:reachability}. We refer to ~\cite{FlandoliGessScheutzow.2017.PTRF511} for conditions under which condition (ii") holds and why it implies the reachability of $\varphi$. We remark that the pointwise strong swift transitivity condition is usually verifiable for SDEs with additive noise (see \cref{rmk:additivestrongtransitivity}).

\subsubsection{Lyapunov exponent and weak asymptotic stability}\label{subsubsec:Lyapunov}

In this subsection, we verify weak asymptotic stability for multiplicative SDEs using a negative top Lyapunov exponent and an appropriate integrability condition.

\begin{lemma}\cite[Proposition 2.20]{FlandoliGessScheutzow.2017.PTRF511}
	If $\varphi$ is strongly mixing and weakly asymptotically stable on $U$ with $\mu^{\infty}(U)>0$, then there is an $N\in \bb{N}$ and $\ca{F}_0$-measurable random variables $a_1,\ldots,a_N$ such that
	\begin{align}
		A(\omega)={\rm{supp}}(\mu_{\omega})=\{a_i(\omega):i=1,\ldots,N\}
	\end{align}
	is a minimal weak point attractor.
\end{lemma}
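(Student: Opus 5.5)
The argument follows the structure of \cite[Proposition 2.20]{FlandoliGessScheutzow.2017.PTRF511}, carried out in three steps.

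\textbf{Step 1: the statistical equilibrium.} Let $\rho=\mu^\infty$ be the invariant measure. By the correspondence \eqref{eq:Markovmeasure} we obtain the Markov invariant measure $\mu_\omega=\lim_k\varphi(t_k,\theta_{-t_k}\omega)\rho$, which is $\ca F_0$-measurable and satisfies $\EE[\mu_\cdot]=\rho$. Strong mixing, $P_tf(y)\to\rho(f)$, together with the white-noise independence of past and future, is used to show that $A(\omega):=\operatorname{supp}(\mu_\omega)$ attracts each point in probability. Concretely, for a fixed $y$ and bounded Lipschitz $f$ we have
\[
\varphi(t,\theta_{-t}\omega,y)\stackrel{d}{=}\varphi(t,\omega,y).
\]
We then compare $\EE\big[\mathrm{dist}(\varphi(t,\theta_{-t}\cdot,y),A(\cdot))\wedge1\big]$ with $\int \mathrm{dist}(z,A)\wedge1\,\d\mu_\omega(z)=0$. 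This comparison runs through a two-time argument: write $\varphi(t+s,\theta_{-t-s}\omega,y)=\varphi(t,\theta_{-t}\omega,\varphi(s,\theta_{-t-s}\omega,y))$ and use mixing in $s$ to replace the inner point by a $\rho$-distributed point independent of $\ca F_{-t,0}$. Pushing forward then gives $\mu_\omega$ in the limit. This is the standard proof that the support of the Markov measure is the minimal weak point attractor under strong mixing, and it yields both invariance and minimality.

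\textbf{Step 2: the measure $\mu_\omega$ has atoms.} We use weak asymptotic stability on $U$ with $\rho(U)>0$. Take $x,y$ drawn independently from $\rho|_U$ and independent of the noise. Then $\varphi(t_n,\theta_{-t_n}\omega,\cdot)$ applied to $(x,y)$ yields points that become close in probability on $\ca M$. Consequently $(\mu_\omega\otimes\mu_\omega)$, being the limit of $\varphi(t_n,\theta_{-t_n}\omega)^{\otimes2}(\rho\otimes\rho)$, gives positive expected mass to the diagonal:
\[
\EE\big[(\mu_\cdot\otimes\mu_\cdot)(\Delta)\big]\geq \rho(U)^2\,\PP(\ca M)>0 .
\]
Here we use that $\mu_\omega\otimes\mu_\omega=\lim \varphi(t_n,\theta_{-t_n}\omega)^{\otimes 2}(\rho\otimes\rho)$ also holds, by a martingale convergence argument, and that $\Delta$ is closed. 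The positive mass of the diagonal is obtained via a Portmanteau argument on $\Delta_\varepsilon$ with $\varepsilon\downarrow0$. It follows that $\sum_a\mu_\omega(\{a\})^2>0$ with positive probability.

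\textbf{Step 3: $\mu_\omega$ is purely atomic with finitely many atoms.} Consider the quantity $m(\omega):=\max_a\mu_\omega(\{a\})$. It is nondecreasing along the flow, since $\varphi(t,\omega)$ is a map, so $m(\theta_t\omega)\ge m(\omega)$. By invariance of $\PP$ under $\theta_t$, $m$ is therefore invariant. Ergodicity of the Wiener shift then makes $m$ an almost sure constant $c>0$. The same reasoning applies to the total atomic mass. The atomic part $\mu^{a}_\omega$ is forward invariant, and its expectation $\EE[\mu^a_\cdot]$ is a $P_t$-invariant sub-probability measure. By uniqueness of $\rho$, which follows from strong mixing, it is a constant multiple of $\rho$. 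Applying the same argument to the continuous part, the constant must be $1$, so $\mu_\omega$ is purely atomic. Finally, the atoms of maximal mass $c$ form an invariant random set. Their number is at most $1/c$, so finitely many measurable selections $a_1,\ldots,a_N$ exist, and they are $\ca F_0$-measurable. An averaging argument shows that all atoms have equal mass $c$, which gives $N=1/c$.

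The main obstacle is Step 3, specifically proving that the non-atomic part vanishes. I would handle it by applying the Step 2 argument to the normalized continuous part: if that part were nonzero, its expectation would again be proportional to $\rho$, so it would charge $U$ and would have to produce diagonal mass under the product. This contradicts non-atomicity.
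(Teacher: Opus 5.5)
Your architecture matches the one behind \cite[Proposition 2.20]{FlandoliGessScheutzow.2017.PTRF511}, which the paper cites without reproving: atoms from weak asymptotic stability via $\EE[(\mu_\cdot\otimes\mu_\cdot)(\Delta)]>0$, the maximal-atom argument, then identification of $\operatorname{supp}\mu_\omega$ as the minimal weak point attractor. Step 2 is correct. No martingale argument is needed there, since $\varphi(t_n,\theta_{-t_n}\omega)^{\otimes 2}(\rho\otimes\rho)\to\mu_\omega\otimes\mu_\omega$ simply because products of weakly convergent sequences converge weakly.

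\textbf{Step 3.} The decisive gap is here. Uniqueness of the $P_t$-invariant measure only tells you that the \emph{means} $\EE[\mu^a_\cdot]$ and $\EE[\mu^c_\cdot]$ are multiples of $\rho$. That forces neither $\mu^c_\omega=0$ nor equal atom masses. Iterating your maximal-atom argument on the remainder only yields a possibly infinite hierarchy $c_1>c_2>\cdots$ of atom levels, and then $\operatorname{supp}\mu_\omega$ need not be finite or compact. What is missing is that a Markov invariant random measure is determined by its mean. If $\nu$ is $\ca{F}_0$-measurable with $\varphi(t,\omega)\nu_\omega=\nu_{\theta_t\omega}$, then $\EE[\nu_\cdot(f)\mid\ca{F}_{-t,0}](\omega)=(\varphi(t,\theta_{-t}\omega)\EE\nu)(f)$. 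Martingale convergence then gives $\nu_\omega=\lim_t\varphi(t,\theta_{-t}\omega)\EE\nu$, which is the correspondence recalled at \eqref{eq:Markovmeasure}. With it, the argument closes:
\begin{itemize}
\item If $\mu^c\neq0$, its normalisation would have mean $\rho$ and hence equal $\mu_\omega$, which has atoms. This is a contradiction, so $\mu_\omega$ is purely atomic.
\item Let $N$ be the number of atoms of mass $c$; it is a.s.\ constant, since maximal atoms cannot merge. Then $\frac1N\sum_{i=1}^N\delta_{a_i}$ is invariant and Markov with mean $\rho$, so it equals $\mu_\omega$, which gives $N=1/c$.
\end{itemize}
Running Step 2 on the continuous part, as you propose, tacitly uses the same fact, because your Step 2 rests on $\mu_\omega$ being the pullback limit of $\rho$. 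It can be repaired via $\nu_\omega=\varphi(t,\theta_{-t}\omega)\nu_{\theta_{-t}\omega}$, independence and $\EE[\nu_\cdot(U)^2]\ge\rho(U)^2$. That repair still leaves the finiteness/equal-mass claim unproved.

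\textbf{Step 1.} This step does not work as described. The set $A(\omega)=\operatorname{supp}\mu_\omega$ is measurable with respect to the whole past $\ca{F}_0$. It is therefore not independent of the inner point $\varphi(s,\theta_{-t-s}\omega,y)$, which is $\ca{F}_{-t-s,-t}$-measurable. Independence of that point from $\ca{F}_{-t,0}$ does not let you replace it by an independent $\rho$-distributed point inside $\dist(\varphi(t,\theta_{-t}\omega,\cdot),A(\omega))$. You first have to replace $A(\omega)$ by an $\ca{F}_{-t,0}$-measurable approximation. Two options:
\begin{itemize}
\item Use the open sets $\bar A_{n,b}$ built from $\varphi(n,\theta_{-n}\omega)\rho$ in Steps (i)--(iii) of the proof of \cref{thm:weak_synchronisation_MVSDE}.
\item Once $A=\{a_1,\ldots,a_N\}$ is known, use $\ca{F}_{-t,0}$-measurable $a_i^{(t)}\to a_i$ in probability. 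These exist because $\ca{F}_0$ is generated by $\bigcup_t\ca{F}_{-t,0}$, and the error is controlled by $|\min_i\dist(x,a_i)-\min_i\dist(x,a_i^{(t)})|\le\max_i\dist(a_i,a_i^{(t)})$.
\end{itemize}
Only after this substitution can you invoke independence and mixing. Both routes use compactness (finiteness) of the support, and a weak point attractor must be random compact by definition. So Step 1 cannot be obtained from strong mixing alone and must come after Step 3, where compactness follows from finiteness.
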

\begin{proposition}\cite[Lemma 3.1]{FlandoliGessScheutzow.2017.PTRF511}\label{prop:asymptotic-stability}
	Assume that $$\varphi(t, \omega,\cdot)\in C^{1,\delta}_{\rm{loc}}$$ for some $\delta\in (0,1)$ and all $t\geq 0$. Moreover, $P_t$ has an ergodic invariant measure $\rho$ such that
	\begin{align}
		\EE\int_{\RR^d} \log^+\|D\varphi(1, \omega, x)\|\,\d\rho(y)<\infty.
	\end{align}
	Then, the Lyapunov spectrum $\lambda_{N}<\ldots<\lambda_1$ exist such that
	\begin{align}
		\lim_{n\to \infty}\frac{\log |D\varphi(n, \omega, x)v|}{n}\in \{\lambda_i\}_{i=1}^{N},
	\end{align}
	for all $v \in \RR^d\setminus \{0\}$ and $\PP\otimes \rho$-a.s. $(\omega, x)\in \Omega\times \RR^d$.
	
	Furthermore, we assume that the top Lyapunov exponent $\lambda_{\rm{top}}:= \lambda_1<0$
	and 
	\begin{align}	\EE\int_{\RR^d}\log^+(\|\varphi(1, \omega, \cdot+x)-\varphi(1,\omega, x)\|_{C^{1, \delta}(\bar{B}(0,1))})\,\d \rho(x)<\infty.
	\end{align}
	Then, $\varphi$ is  weakly asymptotically stable on a (deterministic) non-empty, open set $U$.
\end{proposition}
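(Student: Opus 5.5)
The plan has two parts. First obtain the Lyapunov spectrum from a multiplicative ergodic theorem. Then turn $\lambda_{\rm top}<0$ into local contraction via a stable manifold theorem, and integrate that contraction over a positive-measure set of noise realisations.

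\textbf{Step 1 (Lyapunov spectrum).} Set $\Omega\times\RR^d$ with the skew-product $\Theta_1(\omega,x)=(\theta_1\omega,\varphi(1,\omega,x))$ and the measure $\PP\otimes\rho$. Because $\varphi$ is a white-noise RDS, $\rho$ is $P_t$-invariant and $\varphi(1,\cdot,x)$ is $\ca F_{0,1}$-measurable, independent of $\ca F_0$. It follows that $\PP\otimes\rho$ is $\Theta_1$-invariant. Ergodicity of $\rho$ for $P_t$ then gives ergodicity of $\PP\otimes\rho$ for $\Theta_1$; this is the standard argument for product measures of Markov measures.

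The derivative $D\varphi(1,\omega,x)$ is a linear cocycle over $\Theta_1$, and by the log-integrability assumption $\log^+\|D\varphi(1)\|\in L^1(\PP\otimes\rho)$. Oseledets' multiplicative ergodic theorem (in the one-sided version, requiring only the $\log^+$ bound) now gives finitely many deterministic exponents $\lambda_N<\dots<\lambda_1$, with $\lambda_i\in[-\infty,\infty)$. It also gives, for $\PP\otimes\rho$-a.e.\ $(\omega,x)$ and every $v\neq0$, that $\frac1n\log|D\varphi(n,\omega,x)v|$ converges to one of the $\lambda_i$.

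\textbf{Step 2 (local stable manifold).} Assume $\lambda_1<0$. Apply a Ruelle-type stable manifold theorem to the discrete-time random map $\varphi(1,\omega,\cdot)$ around the orbit of $x$, for example \cite[Lemma 3.1]{FlandoliGessScheutzow.2017.PTRF511} or Ruelle's theorem for $C^{1,\delta}$ maps. The required input is the $\log^+$-integrability of the local $C^{1,\delta}$-norm of $\varphi(1,\omega,\cdot+x)-\varphi(1,\omega,x)$ on $\bar B(0,1)$. Fix $\lambda_1<\lambda<0$. The theorem yields measurable $r(\omega,x)>0$ and $C(\omega,x)<\infty$ with
\[
|\varphi(n,\omega,y)-\varphi(n,\omega,x)|\le C(\omega,x)e^{\lambda n}|y-x|\quad\text{for all } y\in B(x,r(\omega,x)),\ n\in\bb N.
\]
Since $\lambda_1$ is the top exponent, the stable set is a full neighbourhood, not just a manifold. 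Interpolating between integer times via the cocycle property and continuity of $\varphi$ on $[0,1]$ gives the same bound along any integer sequence $t_n=n$. This step is the main obstacle: the nonlinear control of the tempered radius from the Hölder norms is exactly where the $C^{1,\delta}$ integrability hypothesis is used, and I would take it from the cited stable manifold theory rather than redo it.

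\textbf{Step 3 (deterministic set $U$).} Choose $r_0,C_0$ such that $\PP\otimes\rho(\{r>r_0, C<C_0\})>0$. By Fubini there is a point $x_0$ such that the slice $\ca M:=\{\omega:r(\omega,x_0)>r_0,\ C(\omega,x_0)<C_0\}$ has $\PP(\ca M)>0$. Take $x_0$ in the support of $\rho$ so that $\rho(U)>0$ later, which is possible since almost every $x$ lies in the support.

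Set $U=B(x_0,r_0/2)$. For $x,y\in U$ and $\omega\in\ca M$, both points lie in $B(x_0,r(\omega,x_0))$. Hence
\[
|\varphi(n,\omega,x)-\varphi(n,\omega,y)|\le 2C_0e^{\lambda n}r_0\to0 .
\]
So $\mathbf 1_{\ca M}\,\dist(\varphi(n,\cdot,x),\varphi(n,\cdot,y))\to0$ almost surely, and in particular in probability. This is weak asymptotic stability in the sense of \cref{def:Weak asymptotic stability}, with $t_n=n$.
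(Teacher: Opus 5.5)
Your proposal is correct and follows essentially the same route as the argument behind the cited \cite[Lemma 3.1]{FlandoliGessScheutzow.2017.PTRF511}, which the paper reproduces at the end of the proof of \cref{prop:existenceLyapunovexponent}(iii): Oseledets' theorem, then Ruelle's stable manifold theorem with $\lambda_{\rm{top}}<0$ so that the stable set is a full contracting neighbourhood, then a measure-theoretic choice of a deterministic ball; you make that choice via a Fubini slice at a fixed centre $x_0\in\operatorname{supp}\rho$ with a uniform radius, where the paper uses a countable family of rational balls, and your version also makes $\rho(U)>0$ explicit. One routine correction to Step~1: on the two-sided path space, $\PP\otimes\rho$ is in general not $\Theta_1$-invariant, because $\theta_1\omega$ determines $\omega$ and so is not independent of $\varphi(1,\omega,x)$; it is invariant on the one-sided $\sigma$-algebra $\ca{F}_{0,\infty}\otimes\ca{B}(\RR^d)$, where the relevant fact is that $\varphi(1,\cdot,x)$ is independent of $\ca{F}_{1,\infty}$ (not of $\ca{F}_0$), and this one-sided system is all the multiplicative ergodic and stable manifold theorems require.
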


For our purpose of estimating the $C^{\delta}$-norm of parameter-dependent stochastic flow, we modify the classical Kolmogorov's continuity theorem \cite[Theorem 3.15]{eberle2019stochastic} as follows.

\begin{proposition}[Parameter-dependent Kolmogorov's continuity theorem]\label{prop:modifiedKolmogorov}
	Let $(E, \|\cdot\|)$ be a Banach space, and for each $y\in \RR^d$, let  $X_{u+y}:\Omega\to E$, where $u\in \bar{B}(0,1)$, be a stochastic process valued at $E$. Suppose that there exist constants $q, \varepsilon>0$ and a function $F:\RR^d\to \RR_+$ such that 
\begin{align}\label{es:qpowerexpectation}
		\EE\big[\|X_{u+y}-X_{v+y}\|^q\big]\leq F(y)|u-v|^{d+\varepsilon}\,\,\,\,\,\,\,    \text{for any}\,\, u,v\in \bar{B}(0,1).
	\end{align} 
	Then, there exists a modification $(\xi_{u+y})_{u\in \bar{B}(0,1)}$ of $(X_{u+y})_{u\in \bar{B}(0,1)}$ such that 
	\begin{align}\label{es:KolmogrovHoldernorm}
		\EE\Big[\Big(\sup_{u\neq v\in\bar{B}(0,1)}\frac{\|\xi_{u+y}-\xi_{v+y}\|}{|u-v|^{\alpha}}\Big)^q\Big]\leq C_{d, q, \varepsilon, \alpha} F(y) \,\,\,\,\,\,\, \text{for any}\,\, \alpha\in [0,\varepsilon/q)\,. 
	\end{align}
\end{proposition}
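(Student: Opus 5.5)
The approach is to reproduce the standard proof of Kolmogorov's continuity theorem via a dyadic chaining (Garsia–Rodemich–Rumsey type) argument, keeping track of the constant. The only new feature is that the moment bound \eqref{es:qpowerexpectation} carries the factor $F(y)$. Since $y$ is fixed throughout, the family $u\mapsto X_{u+y}$ is simply a process indexed by the compact set $\bar{B}(0,1)$. The plan is therefore to show that every estimate in the classical proof is linear in the constant appearing in the moment bound, so that $F(y)$ factors out and the remaining constant depends only on $d,q,\varepsilon,\alpha$.

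\textbf{Step 1 (dyadic increments).} Let $D_n$ be the dyadic points of level $n$ in $\bar{B}(0,1)$, i.e. points of $2^{-n}\mathbb{Z}^d\cap\bar{B}(0,1)$; there are at most $C_d2^{nd}$ of them. Consider neighbouring pairs $u,v\in D_n$ with $|u-v|\le \sqrt d\,2^{-n}$. Set
\[
K_n:=\max\bigl\{\|X_{u+y}-X_{v+y}\|:\ u,v\in D_n \text{ neighbours}\bigr\}.
\]
Bounding the maximum by the sum over the $C_d2^{nd}$ pairs and using \eqref{es:qpowerexpectation} gives
\[
\EE[K_n^q]\le C_d 2^{nd}F(y)2^{-n(d+\varepsilon)}=C_dF(y)2^{-n\varepsilon}.
\]

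\textbf{Step 2 (chaining).} For $u,v\in D:=\bigcup_n D_n$ with $2^{-m-1}<|u-v|\le 2^{-m}$, connect $u$ and $v$ through successive dyadic approximations. The usual telescoping argument gives
\[
\|X_{u+y}-X_{v+y}\|\le C_d\sum_{n\ge m}K_n .
\]
Dividing by $|u-v|^\alpha\gtrsim 2^{-m\alpha}$, we obtain
\[
M:=\sup_{u\ne v\in D}\frac{\|X_{u+y}-X_{v+y}\|}{|u-v|^\alpha}\le C_d\sum_{n\ge0}2^{n\alpha}K_n .
\]

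\textbf{Step 3 (moment of $M$).} If $q\ge1$, Minkowski's inequality in $L^q$ gives
\[
\|M\|_{L^q}\le C\sum_n 2^{n\alpha}\bigl(C_dF(y)\bigr)^{1/q}2^{-n\varepsilon/q}.
\]
The series converges precisely because $\alpha<\varepsilon/q$, so $\EE[M^q]\le C_{d,q,\varepsilon,\alpha}F(y)$. If $q<1$, use instead the subadditivity $(\sum a_n)^q\le\sum a_n^q$, which again gives a convergent geometric series with the same conclusion. This step is where the restriction $\alpha\in[0,\varepsilon/q)$ enters.

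\textbf{Step 4 (modification).} Since $\EE[M^q]<\infty$, we have $M<\infty$ almost surely. Hence $u\mapsto X_{u+y}$ is uniformly $\alpha$-Hölder on the dense set $D$ and extends uniquely by continuity to $\xi_{u+y}$ on $\bar{B}(0,1)$; this uses the completeness of the Banach space $E$. On the null set $\{M=\infty\}$, set $\xi:=0$. By \eqref{es:qpowerexpectation} and Chebyshev's inequality, $X_{u_k+y}\to X_{u+y}$ in probability for dyadic $u_k\to u$, so $\xi_{u+y}=X_{u+y}$ almost surely and $\xi$ is a modification of $X$. Finally, the Hölder seminorm of $\xi$ over $\bar{B}(0,1)$ equals $M$ by continuity, which yields \eqref{es:KolmogrovHoldernorm}.

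The main care point is Step 2. The chaining constant must not depend on $y$, and this holds because only the geometry of $\bar{B}(0,1)$ and of the dyadic grid enters, never $y$ itself. The constant must also be uniform over all scales $m$. Apart from this, the argument is routine bookkeeping showing that $F(y)$ appears linearly throughout.
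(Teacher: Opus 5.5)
Your argument is correct, but it takes a different route from the paper. You prove the estimate by classical dyadic chaining: a union bound over neighbouring grid pairs at each level, telescoping along dyadic approximations, and then summing $\sum_n 2^{n\alpha}K_n$. The sum is taken in $L^q$, or via $q$-subadditivity if $q<1$, and $F(y)$ is carried linearly throughout.

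The paper does no chaining. It integrates the moment bound against $|u-v|^{-\beta}$ to obtain
\[
\EE\int_{\bar B(y,1)}\int_{\bar B(y,1)}\frac{\|X_{\tilde u}-X_{\tilde v}\|^q}{|\tilde u-\tilde v|^{\beta}}\,\d\tilde u\,\d\tilde v\lesssim F(y),\qquad \beta<2d+\varepsilon .
\]
It then applies a Garsia--Rodemich--Rumsey/Besov--H\"older embedding via \cite[Theorem 3.14]{eberle2019stochastic}. This bounds the $\alpha$-H\"older seminorm of the modification pathwise by a constant times the double integral whenever $\beta>2d+q\alpha$. The nonempty window $2d+q\alpha<\beta<2d+\varepsilon$ is exactly where $\alpha<\varepsilon/q$ enters, playing the role of your convergent geometric series.

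\emph{What each route buys.} The embedding route is shorter and avoids grid combinatorics on a non-rectangular domain. It also makes the linear dependence on $F(y)$ immediate, since the only probabilistic input is one expectation of a nonnegative functional. Its price is reliance on an external embedding lemma. Your route is self-contained, builds the modification explicitly, and treats $q<1$ transparently.

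\emph{Two details you should fix.}
\begin{itemize}
\item The approximations $2^{-n}\lfloor 2^n u\rfloor$ can leave $\bar B(0,1)$. For example, $u=(-\tfrac{11}{16},-\tfrac{11}{16})$ with $n=2$ gives $(-\tfrac34,-\tfrac34)$, which lies outside the ball. Round each coordinate towards zero instead. This keeps the chain inside the ball, and consecutive approximations, as well as the two level-$m$ approximations of $u$ and $v$, are still neighbours.
\item In Step 4, $M<\infty$ with $\alpha=0$ gives no continuity, so the extension cannot be built from it. Construct $\xi$ using a fixed $\alpha_0\in(0,\varepsilon/q)$. The same $\xi$ then works for every $\alpha\in[0,\varepsilon/q)$, as the statement requires.
\end{itemize}
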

\begin{proof}
	Define $\tilde{u}=u+y$. Then \eqref{es:qpowerexpectation} implies that for any $\tilde{u},\tilde{v}\in \bar{B}(y,1)$,
	\begin{align}
		\EE\big[\|X_{\tilde{u}}-X_{\tilde{v}}\|^q\big]\leq F(y)|\tilde{u}-\tilde{v}|^{d+\varepsilon}\,,
	\end{align}
	and for any $\beta<2d+\varepsilon$,
	\begin{align}
		&\EE\Big[\int_{\bar{B}(y,1)}\int_{\bar{B}(y,1)}\frac{\|X_{\tilde{u}}-X_{\tilde{v}}\|^q}{|\tilde{u}-\tilde{v}|^{\beta}}\,\d \tilde{u}\d \tilde{v}\Big]\\
		\leq\,\,&\int_{\bar{B}(y,1)}\int_{\bar{B}(y,1)}\frac{\EE\|X_{\tilde{u}}-X_{\tilde{v}}\|^q}{|\tilde{u}-\tilde{v}|^{\beta}}\,\d \tilde{u}\d \tilde{v}\\
		\leq \,\,&F(y) \int_{\bar{B}(y,1)}\int_{\bar{B}(y,1)}|\tilde{u}-\tilde{v}|^{d+\varepsilon-\beta}\d \tilde{u}\d \tilde{v}\\
		\leq\,\, &F(y)\big|\bb{S}^{d-1}\big|\int_0^2 r^{d+\varepsilon-\beta}r^{d-1}\d r\\
		<\,\,&\infty\,.
	\end{align}
	Hence, 
	\begin{align}
		\int_{\bar{B}(y,1)}\int_{\bar{B}(y,1)}\frac{\|X_{\tilde{u}}-X_{\tilde{v}}\|^q}{|\tilde{u}-\tilde{v}|^{\beta}}\,\d \tilde{u}\d \tilde{v}<\infty\,,\,\,\,\,\,\,\text{almost surely}.
	\end{align}
	Employing  \cite[Theorem 3.14]{eberle2019stochastic}, we obtain that there exists a modification $(\xi_{\tilde{u}})_{\tilde{u}\in \bar{B}(y,1)}$ and the Besov--H\"{o}lder embedding yields that for any %$\alpha\in [0,\varepsilon/q)$, 
    $2d+q\alpha<\beta<2d+\varepsilon$,
	\begin{align}
\EE\Big[\Big(\sup_{\tilde{u}\neq \tilde{v}\in \bar{B}(y,1)}\frac{\|\xi_{\tilde{u}}-\xi_{\tilde{v}}\|}{|\tilde{u}-\tilde{v}|^{\alpha}}\Big)^q\Big]&\leq\,\Big(\frac{8}{\log 2}\frac{\beta}{\beta-2d}\Big)^qd^{\frac{\beta}{2}}\EE\Big[\int_{\bar{B}(y,1)}\int_{\bar{B}(y,1)}\frac{\|X_{\tilde{u}}-X_{\tilde{v}}\|^q}{|\tilde{u}-\tilde{v}|^{\beta}}\,\d \tilde{u}\d \tilde{v}\Big]\\
		&\leq \Big(\frac{8}{\log 2}\frac{\beta}{\beta-2d}\Big)^qd^{\frac{\beta}{2}}F(y)\big|\bb{S}^{d-1}\big|\int_0^2 r^{d+\varepsilon-\beta}r^{d-1}\d r\\
		&\leq\,C_{d, q, \varepsilon, \alpha} F(y)\,,
	\end{align}
	which implies the estimate \eqref{es:KolmogrovHoldernorm}.
\end{proof}

\begin{lemma}\label{prop:existenceLyapunovexponent}
 Consider the RDS $\varphi$ corresponding to \eqref{eq:multiSDE00} and assume it is strongly mixing with respect to the invariant probability measure $\mu^{\infty}$ in the sense of \cref{def:stronglymixing}. We then have the following results.
\begin{thmenumerate}[label=\upshape(\roman*)]
 	\item $\begin{aligned}
 		\bb{E}\int_{\RR^d} \log^+\|(D\varphi)(1, \omega, y)\|\,\d \mu^{\infty}(y)<\infty,
 	\end{aligned}$ where $\log^+(y)\coloneqq \max (\log y, 0)$. It follows from this that   there are constants $\lambda_N<\ldots <\lambda_1$ such that for all $v\in \RR^d\setminus \{0\},   \PP\otimes\mu^{\infty}-{\rm{a.s.}}\,\, (\omega, y)\in\Omega\times \RR^d$,
 	$$\lim\limits_{n\to\infty}\frac{1}{n}\log|(D\varphi)(n, \omega,y)v|\in\{\lambda_i\}_{i=1}^N;$$
 	\item Let $\lambda_{\rm{top}}\coloneqq \lambda_1$. Then, we have the following upper bound on the top Lyapunov exponent
 	\begin{align}
 		\lambda_{\rm{top}}\leq \,\int_{\RR^d}\sup_{|r|=1}\left\langle r, D b\left(y\right)r\right\rangle \d \mu^{\infty}(y)+\frac{1}{2}\int_{\RR^d}\sup_{|r|=1}\big|D\sigma(y)r\big|^2 \d \mu^{\infty}(y)\,.\label{es:Lyaformula}
 	\end{align}

 	\item It holds that  \begin{equation}\label{assu:intergrability-for-aysmptotic-stability}
 		\int_{\RR^d}\int_{\Omega}\log^+(\|\varphi(1, \omega, \cdot+y)-\varphi(1, \omega, y)\|_{C^{1, \delta}(\bar{B}(0,1))})\,\d\PP\otimes\mu^{\infty}<\infty\,.
 	\end{equation} 
 	Together with $\lambda_{\rm{top}}<0$, this implies that $\varphi$ is  weakly asymptotically stable  in the sense of \cref{def:Weak asymptotic stability}.
 \end{thmenumerate}
  
\end{lemma}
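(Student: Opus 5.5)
The plan is to work with the linearised (Jacobian) flow $J_t(\omega,y)\coloneqq D\varphi(t,\omega,y)$. It solves the variational equation
\begin{align}
\d J_t=Db(Y_t)J_t\,\d t+\sum_{k=1}^m D\sigma_k(Y_t)J_t\,\d W^k_t,\qquad J_0=I,
\end{align}
where $\sigma_k$ denotes the $k$-th column of $\sigma$. Since $b,\sigma\in C^{1,1}$, the derivatives $Db$ and $D\sigma$ are bounded, by a constant $L$ say. The three parts of the statement will be handled in order (i), (ii), (iii).

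\emph{Part (i).} For a fixed unit vector $v$, apply It\^o's formula to $|J_tv|^2$. This gives
\begin{align}
\d|J_tv|^2=\Big(2\langle J_tv,Db\,J_tv\rangle+\sum_k|D\sigma_kJ_tv|^2\Big)\d t+2\sum_k\langle J_tv,D\sigma_kJ_tv\rangle\,\d W^k_t.
\end{align}
The drift is bounded by $(2L+mL^2)|J_tv|^2$, so Gronwall's lemma gives $\EE|J_1v|^2\le e^{C}$, and more generally $\EE\|J_1\|^p\le C_p$ for every $p$, uniformly in $y$. Since $\log^+x\le x$, it follows that $\EE\int\log^+\|J_1\|\,\d\mu^\infty\le C<\infty$. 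The existence of the Lyapunov spectrum then follows from the multiplicative ergodic theorem, in the form stated in \cref{prop:asymptotic-stability}. The required ergodicity of $\mu^\infty$ comes from strong mixing, which forces the invariant measure to be unique and hence ergodic.

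\emph{Part (ii).} Apply It\^o's formula to $\log|J_tv|$. With $r_t\coloneqq J_tv/|J_tv|$, this gives
\begin{align}
\log|J_tv|=\int_0^t\Big(\langle r_s,Db(Y_s)r_s\rangle+\tfrac12\sum_k|D\sigma_k(Y_s)r_s|^2-\sum_k\langle r_s,D\sigma_k(Y_s)r_s\rangle^2\Big)\d s+M_t,
\end{align}
where $M_t$ is a martingale with bounded quadratic variation rate. Drop the last, nonpositive term and bound the remaining integrand by its supremum over unit vectors. Divide by $t=n$ and let $n\to\infty$. The martingale term vanishes almost surely by the strong law of large numbers for martingales, since $\langle M\rangle_n\le Cn$. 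The time average of $g(Y_s)\coloneqq\sup_{|r|=1}\langle r,Db(y)r\rangle+\frac12\sup_{|r|=1}|D\sigma(y)r|^2$ converges $\PP\otimes\mu^\infty$-a.s. to $\int g\,\d\mu^\infty$ by Birkhoff's ergodic theorem applied to the stationary ergodic process started from $\mu^\infty$; this is justified because $g$ is bounded. This yields \eqref{es:Lyaformula} for every $v$, and hence for $\lambda_1$. One point to check here is the identification of $\sum_k|D\sigma_kr|^2$ with the quantity $|D\sigma(y)r|^2$ in the statement, under the appropriate norm convention.

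\emph{Part (iii).} This is where I expect the main obstacle: the $C^{1,\delta}$ norm on the unit ball must be controlled uniformly enough that it integrates against $\mu^\infty$, whose moments are not assumed. The plan is to apply \cref{prop:modifiedKolmogorov} with $E=\RR^d\times\RR^{d\times d}$ and
\begin{align}
X_{u+y}\coloneqq\big(\varphi(1,\cdot,u+y)-\varphi(1,\cdot,y),\,D\varphi(1,\cdot,u+y)\big).
\end{align}
The key estimates are moment bounds of the form $\EE|\varphi(1,x)-\varphi(1,x')|^q\le C|x-x'|^q$ and $\EE\|D\varphi(1,x)-D\varphi(1,x')\|^q\le C|x-x'|^q$, with $C$ independent of $y$. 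The first follows from BDG and Gronwall. The second uses the Lipschitz continuity of $Db$ and $D\sigma$, which holds because of the $C^{1,1}$ assumption: one runs BDG and Gronwall on the difference of the variational equations, together with the uniform moment bounds for $J$ from part (i). Choosing $q$ large, so that $d+\varepsilon=q$ with $\varepsilon/q>\delta$, \cref{prop:modifiedKolmogorov} gives a constant $F(y)\equiv C$ bounding the expected $q$-th power of the H\"older seminorm of $D\varphi$. The sup-norm parts are bounded by $|u|\sup\|D\varphi\|\le\|D\varphi(1,y)\|+2[D\varphi]_\delta$. Consequently $\EE\|\cdot\|_{C^{1,\delta}(\bar B(0,1))}\le C$ uniformly in $y$, and using $\log^+x\le x$ the integral in \eqref{assu:intergrability-for-aysmptotic-stability} is at most $C$. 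Combined with $\lambda_{\rm top}<0$, the second half of \cref{prop:asymptotic-stability} then yields weak asymptotic stability.
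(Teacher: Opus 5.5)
Your proposal is correct and follows essentially the same route as the paper: uniform-in-$y$ moment bounds on the Jacobian for (i), It\^o's formula for $\log|D\varphi_t v|$ combined with Birkhoff's theorem and a martingale law of large numbers for (ii), and, for (iii), the parameter-dependent Kolmogorov theorem applied to $D\varphi(1,\cdot+y)$ with $y$-independent constants coming from the Lipschitz continuity of $Db$ and $D\sigma$. The only differences are cosmetic: you use $\log^+x\le x$ where the paper uses Jensen, and you invoke the martingale strong law where the paper gives an explicit Borel--Cantelli argument along the times $k^2$.
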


\begin{proof} For notational simplicity, throughout the proof we write $\varphi_t(\omega,y)$ for $\varphi(t,\omega,y)$. We divide the proof into three steps corresponding to the three statements in the theorem:

	\emph{Proof of} (i) Differentiating both sides of  \eqref{eq:multiSDE00}, we obtain for any $y\in\RR^d$,
	\begin{equation}
		D \varphi_{t}(y)={\bf{I}}_{d\times d}+\int_0^t  (D b)(\varphi_s(y)) (D \varphi_{s}(y)) \,\d s+\int_0^t  (D\sigma)(\varphi_s(y)) (D \varphi_{s}(y))\,\d W_{s}\,.
	\end{equation}
It follows from the boundedness of $D b$ and $D \sigma$, It\^{o}'s isometry,  H\"{o}lder's inequality, and Gr\"{o}nwall's inequality that for any $t\geq 0$,
	\begin{align}
		\sup_{y\in\RR^d}\EE|D\varphi_t(y)|^2\leq C_t\,.\label{es:firstderivativeexpectation}
	\end{align}
Then, Fubini's theorem and Jensen's inequality yield that
\begin{align}
	\EE\int_{\RR^d} \log^+\|D\varphi_1(\omega, y)\|\,\d\mu^{\infty}(y)
	\leq \,\,\int_{\RR^d} \log^+\EE(\|D\varphi_1(\omega, y)\|^2+1)\,\d\mu^{\infty}(y)<\,\,\infty\,.
\end{align}
Hence, using \cref{prop:asymptotic-stability}, we conclude the proof of (i).

\emph{Proof of} (ii) The directional derivative $D \varphi_{t}(y)\cdot v$, where $y\in\RR^d$ and $v\in \RR^d \setminus \{0\}$, satisfies 
\begin{equation}\label{eq:Dvarphiv}
	D \varphi_{t}(y)\cdot v=v+\int_0^t  (D b)(\varphi_s(y)) (D \varphi_{s}(y))\cdot v \,\d  s+\int_0^t  (D \sigma)(\varphi_s(y)) (D \varphi_{s}(y))\cdot v\,\d W_{s}\,.
\end{equation}
Applying It\^o's formula, we have that
\begin{align}\label{eq:v_tsquare}
\left|D \varphi_{t}(y)\cdot v\right|^{2}= & |v|^2+ 2 \int_{0}^{t}\left\langle D \varphi_{s}(y)\cdot v, (D b)\left(\varphi_{s}(y)\right)D \varphi_{s}(y)\cdot v\right\rangle + \big|(D\sigma)(\varphi_s(y))D \varphi_{s}(y)\cdot v\big|^2 \d s \\
& +2\int_{0}^{t}\left\langle D \varphi_{s}(y)\cdot v, (D\sigma)(\varphi_s(y)) D \varphi_{s}(y)\cdot v  \,\d W_{s}\right\rangle\,.
\end{align}
Again, It\^{o}'s formula  \footnote{Since $\varphi_t(\omega, \cdot)$ is a $C^1$-diffeomorphism, $D\varphi_t(\omega, \cdot)$ is invertible for every $t\geq 0$. Hence, for every $v\neq 0$, $D\varphi_t(\omega, y)\cdot v\neq 0$, $t\geq 0$. Therefore, although $x\mapsto \log |x|$ is singular at origin, It\^{o}'s formula can be applied to $\log |D\varphi_t(\omega, y)\cdot v|$, since the process $D\varphi_t(\omega, y)\cdot v$ never reach the origin.} leads us to 
\begin{align}
\log \left(\left|D \varphi_{t}(y)\cdot v\right|^{2}\right)= \,\,\,&\log (\left|v\right|^{2})+ \int_{0}^{t} \frac{2}{\left|D \varphi_{s}(y)v\right|^{2}}\left\langle D \varphi_{s}(y)\cdot v, (D b)\left(\varphi_{s}(y)\right)D \varphi_{s}(y)\cdot v\right\rangle\,\d s\\
& +\int_0^t \frac{1}{\left|D \varphi_{s}(y)\cdot v\right|^{2}}\big|(D \sigma)(\varphi_s(y))D \varphi_{s}(y)\cdot v\big|^2 \d s \\
& -2 \int_{0}^{t} \frac{1}{\left(\left|D \varphi_{s}(y)\cdot v\right|^{2}\right)^{2}} |(D \sigma(\varphi_s(y))D \varphi_{s}(y)\cdot v)^TD \varphi_{s}(y)\cdot v|^2 \,\d s \\ 
&+2\int_{0}^{t} \frac{1}{\left|D \varphi_{s}(y)\cdot v\right|^{2}} \left\langle D \varphi_{s}(y)\cdot v, (D \sigma)(\varphi_s(y)) D \varphi_{s}(y)\cdot v  \,\d W_{s}\right\rangle\,.
\end{align}
Then, we have
\begin{align}
	\frac{1}{t}\log |D\varphi_t(y)\cdot v|%\leq \,\,\,&\frac{1}{2t}\log \left(\left|\nabla \varphi_{t}(y)v\right|^{2}\right)\\[1mm]
	=\,\,\, &\frac{1}{2t}\log (\left|v\right|^{2})+\frac{1}{t}\int_{0}^{t} \frac{1}{\left|D \varphi_{s}(y)\cdot v\right|^{2}}\left\langle D \varphi_{s}(y)\cdot v, (D b)\left(\varphi_{s}(y)\right)D \varphi_{s}(y)\cdot v\right\rangle\,\d s\\
& +\frac{1}{2t}\int_0^t \frac{1 }{\left|D \varphi_{s}(y)\cdot v\right|^{2}}\big|(D \sigma)(\varphi_s(y))D \varphi_{s}(y)\cdot v\big|^2 \d s \\
& -\frac{1}{t} \int_{0}^{t} \frac{1}{\left(\left|D \varphi_{s}(y)\cdot v\right|^{2}\right)^{2}} |(D \sigma(\varphi_s(y))D \varphi_{s}(y)\cdot v)^TD \varphi_{s}(y)\cdot v|^2 \,\d s \\ 
&+\frac{1}{t}\int_{0}^{t} \frac{1}{\left|D \varphi_{s}(y)\cdot v\right|^{2}} \left\langle D \varphi_{s}(y)\cdot v, (D \sigma)(\varphi_s(y)) D \varphi_{s}(y)\cdot v  \,\d W_{s}\right\rangle\\[1mm]
%&\leq \int_{\RR^d\times S^{d-1}} \left\langle z, D b\left(y\right)z\right\rangle \d \mu(y,z)+ \frac{1}{2t}\int_{0}^{t} \frac{1}{\left|v_{s}\right|^{2}}\big|\\ Here I still don't want to apply ergodic theorem to coupled system $(\varphi_s(y), \frac{v_s}{|v_s|}), which remains to be a Markov process, because otherwise I need to apply Ito formula to $f(x)=\frac{x}{|x|}$$
\leq \,\,\, &\frac{1}{2t}\log (\left|v\right|^{2})+\frac{1}{t} \int_{0}^{t} \left\langle \frac{D \varphi_s(y)\cdot v}{|D \varphi_s(y)\cdot v|}, D b\left(\varphi_{s}(y)\right)\frac{D \varphi_s(y)\cdot v}{|D \varphi_s(y)\cdot v|}\right\rangle \d s\\
&+ \frac{1}{2t}\int_{0}^{t} \left|(D \sigma)(\varphi_s(y))\frac{D\varphi_s(y)\cdot v}{|D \varphi_s(y)\cdot v|}\right|^2 \d s \\
&+\frac{1}{t}\int_{0}^{t}  \left\langle \frac{D \varphi_s(y)\cdot v}{|D \varphi_s(y)\cdot v|}, D \sigma(\varphi_s(y)) \frac{D \varphi_s(y)\cdot v}{|D \varphi_s(y)\cdot v|}  \,\d W_{s}\right\rangle\,.
\end{align} 
Passing to the limit $t\to\infty$ and applying Birkhoff's ergodic theorem leads us to
	\begin{align}
	\lambda_{\rm{top}}&\leq \,\int_{\RR^d}\sup_{|r|=1}\left\langle r, D b(y)r\right\rangle \d \mu^{\infty}(y)+\frac{1}{2}\int_{\RR^d}\sup_{|r|=1}\big|D \sigma(y)r\big|^2 \d \mu^{\infty}(y)\,,\label{es:Lyaformula1}
\end{align}
where in the last inequality, we use the fact that $\PP$-a.s.
\begin{align}\label{es:martingalelimsup}
	\lim_{t\to\infty}\frac{1}{t}\int_{0}^{t}  \left\langle \frac{D \varphi_s(y)\cdot v}{|D \varphi_s(y)\cdot v|}, D \sigma(\varphi_s(y))\frac{D \varphi_s(y)\cdot v}{|D \varphi_s(y)\cdot v|}  \,\d W_{s}\right\rangle= 0\,.
\end{align}
Indeed, let $r_s\coloneqq \frac{D \varphi_s(y)\cdot v}{|D \varphi_s(y)\cdot v|}$, and note that the quadratic variation of the stochastic integral $M_t$ in \eqref{es:martingalelimsup} is given by
\begin{equation}
\langle M\rangle_t=\int_{0}^{t} \big|\big(D \sigma(\varphi_s(y))r_s\big)^{T}r_s\big|^2\,\d s 
\leq C_{\sigma} t \, ,		
\end{equation}
where we use the boundedness of $D \sigma$. By Chebyshev's inequality,  for any $\delta>0$,
\begin{equation}
	\PP\left(|M_{k^2}|\geq k^2 \delta \right) \leq C_{\sigma}\frac{1}{k^2\delta^2}\, .
\end{equation}
The right-hand side is summable in $k$, and thus, by the Borel--Cantelli lemma, we have that $\PP$-a.s.  for any $\delta>0$, there exists $n_0$ such that for all $k\geq n_0$, $M_{k^2}/k^2<\delta$, meaning that $M_{k^2}/k^2\to 0$ as $k\to\infty$. Moreover,
\begin{align}
\PP \left(\sup_{t\in [k^2, (k+1)^2)}\frac{|M_t|}{t}\geq \delta\right) \leq \,\,&\, \PP \left(\frac{1}{k^2}\sup_{t\in [k^2, (k+1)^2)}|M_t|\geq \delta\right) \\
\leq \,\,&\, \PP \left(\frac{1}{k^2}\sup_{t\in [k^2, (k+1)^2)}|M_t-M_{k^2}|+\frac{|M_{k^2}|}{k^2}\geq \delta\right) \\
\leq \,\,& \, \PP \left(\frac{1}{k^2}\sup_{t\in [k^2, (k+1)^2)}|M_t-M_{k^2}|\geq \delta/2\right)+\PP\left(\frac{|M_{k^2}|}{k^2}\geq \delta/2\right) \\
\leq \,\,& \, \frac{4\, \EE\left(\sup_{t\in [k^2, (k+1)^2)}|M_t-M_{k^2}|^2\right) }{k^4\delta^2}+C_{\sigma}\frac{4}{k^2\delta^2}\,.\label{es:averagemartingale}
\end{align}
By the Burkholder--Davis--Gundy inequality,
\begin{align}
 \EE\left(\sup_{t\in [k^2, (k+1)^2)}|M_t-M_{k^2}|^2\right)=\,\,& \EE\left(\sup_{t\in [k^2, (k+1)^2)}\Big|\int_{k^2}^{t}  \left\langle r_s, D \sigma(\varphi_s(y))\,r_s  \,\d W_{s}\right\rangle\Big|^2\right)\\
 \leq\,\,&4\,\EE\int_{k^2}^{(k+1)^2}   \big|\big(D \sigma(\varphi_s(y))r_s\big)^{T}r_s\big|^2\,\d s\\
 \leq\,\,&4\, C_{\sigma}\big((k+1)^2-k^2\big)\,.
\end{align}
Substituting this estimate into \eqref{es:averagemartingale}, we deduce that 
\begin{align}
\PP \left(\sup_{t\in [k^2, (k+1)^2)}\frac{|M_t|}{t}\geq \delta\right) \leq \,\,&\frac{16\, C_{\sigma}(2k+1) }{k^4\delta^2}+C_{\sigma}\frac{4}{k^2\delta^2}\,,
\end{align}
which is summable in $k$ as well. Therefore,   the Borel--Cantelli lemma yields  \eqref{es:martingalelimsup}.

\emph{Proof of} (iii) Now we turn to the estimate \eqref{assu:intergrability-for-aysmptotic-stability}. By the definition of the $C^{1,\delta}$-norm, we have, for any $y\in\mathbb{R}^d$, 
\begin{align}
	&\EE\big\|\varphi_1(\omega, \cdot+y)-\varphi_1(\omega, y)\big\|_{C^{1, \delta}(\bar{B}(0,1))}\\[1mm]
	=\,\,\,&\EE\big\|\varphi_1(\omega, \cdot+y)-\varphi_1(\omega, y)\big\|_{C^{0}(\bar{B}(0,1))}+\EE\big\|D\big(\varphi_1(\omega, \cdot+y)-\varphi_1(\omega, y)\big)\big\|_{C^{0}(\bar{B}(0,1))}\\
	&+\EE\Big[D\big(\varphi_1(\omega, \cdot+y)-\varphi_1(\omega, y)\big)\Big]_{C^{\delta}(\bar{B}(0,1))}\\[1mm]
	\leq\,\,\,&2\EE\big\|D\varphi_1(\omega, \cdot+y)\big\|_{C^{0}(\bar{B}(0,1))}+\EE\Big[D\varphi_1(\omega, \cdot+y)\Big]_{C^{\delta}(\bar{B}(0,1))}\\[1mm]
	\leq\,\,\,&2\EE\big\|D\varphi_1(\omega, \cdot+y)\big\|_{C^{0}(\bar{B}(0,1))}\\
    &+\bigg(\EE\Big(\sup_{u, v\in \bar{B}(0,1)}\frac{|D\varphi_1(\omega, u+y)-D\varphi_1(\omega, v+y)|}{|u-v|^{\delta}}\Big)^{2p}\bigg)^{\frac{1}{2p}}\,,\label{Cdeltanorm}
\end{align}
where we used Jensen's inequality in the third inequality, and $p\geq 1$ is a constant to be chosen later. We proceed to estimate the second term  of \eqref{Cdeltanorm}. Since for any $t\geq 0$, $y\in\RR^d$,
\begin{align}
	&(D\varphi_t)(u+y)-(D\varphi_t)(v+y)\\
	%=\,\,&\int_0^t (D b)(\varphi_s(u+y))D\varphi_s(u+y)\,\d s+\sum_{k=1}^d\int_0^t(D \sigma^k)(\varphi_s(u+y))D\varphi_s(u+y)\,\d W_s^k\\
	%&-\int_0^t (D b)(\varphi_s(v+y))D\varphi_s(v+y)\,\d s-\sum_{k=1}^d\int_0^t(D \sigma^k)(\varphi_s(v+y))D\varphi_s(v+y)\,\d W_s^k\\
	=\,\,\,&\int_0^t \Big((D b)(\varphi_s(u+y))-(D b)(\varphi_s(v+y))\Big)D\varphi_s(u+y)\,\d s\\
	&+\int_0^t (D b)(\varphi_s(v+y))\Big(D\varphi_s(u+y)-D\varphi_s(v+y)\Big)\,\d s\\
	&+\int_0^t\Big((D\sigma)(\varphi_s(u+y))-(D\sigma)(\varphi_s(v+y))\Big)D\varphi_s(u+y)\,\d W_s\\
	&+\int_0^t(D\sigma)(\varphi_s(v+y))\Big(D\varphi_s(u+y)-D\varphi_s(v+y)\Big)\,\d W_s\,.
\end{align}
Hence, for any $p\geq 1$, by the Burkholder–Davis–Gundy inequality, Jensen’s inequality, and H\"older’s inequality, we obtain
\begin{align}
	&\EE\big|D\varphi_1(u+y)-D\varphi_1(v+y)\big|^{2p}\\
	\lesssim_{p}\,\,& \int_0^1 \EE\Big(\big|\varphi_s(u+y)-\varphi_s(v+y)\big|^{2p}\big|(D\varphi_s)(u+y)\big|^{2p}\Big)\,\d s\\
	&+\int_0^1 \EE\big|(D\varphi_s)(u+y)-(D\varphi_s)(v+y)\big|^{2p}\,\d s\\
	\lesssim_{p}\,\,& \int_0^1 \Big(\EE\big|\varphi_s(u+y)-\varphi_s(v+y)\big|^{4p}\Big)^{\frac{1}{2}}\Big(\EE\big|(D\varphi_s)(u+y)\big|^{4p}\Big)^{\frac{1}{2}}\,\d s\\
	&+\int_0^1 \EE\big|(D\varphi_s)(u+y)-(D\varphi_s)(v+y)\big|^{2p}\,\d s\\
	\lesssim_{p}\,\,&\int_0^1 \Big(\sup_{z\in\RR^d}\EE|D \varphi_s(z)|^{4p}|u-v|^{4p}\Big)^{\frac{1}{2}}\Big(\EE\big|(D\varphi_s)(u+y)\big|^{4p}\Big)^{\frac{1}{2}}\,\d s\\
	&+\int_0^1 \EE\big|(D\varphi_s)(u+y)-(D\varphi_s)(v+y)\big|^{2p}\,\d s\\
	\lesssim_{p}\,\,&|u-v|^{2p}+\int_0^1 \EE\big|(D\varphi_s)(u+y)-(D\varphi_s)(v+y)\big|^{2p}\,\d s\,,
\end{align}
where we use a similar estimate as \eqref{es:firstderivativeexpectation}  in the last inequality. Hence, Gr\"{o}nwall's inequality leads to  
\begin{align}
	\sup_{y\in\RR^d}\EE\big|(D\varphi_1)(\omega, u+y)-(D\varphi_1)(\omega, v+y)\big|^{2p}\leq C_{p}|u-v|^{2p}\,.
\end{align}
According to the parameter-dependent Kolmogorov's continuity theorem established in  \cref{prop:modifiedKolmogorov}, we conclude that for any  $p>\frac{d}{2(1-\delta)}$, %$2p>d$, and $\delta\in[0,(2p-d)/2p$),
\begin{align}
	\EE\Big(\sup_{u,v\in \bar{B}(0,1)}\frac{|(D\varphi_1)(\omega, u+y)-(D\varphi_1)(\omega, v+y)|}{|u-v|^{\delta}}\Big)^{2p}\leq C_{p}\,.
\end{align}
For every $u\in\bar B(0,1)$, taking $v=0$ yields
\begin{align}
	|D\varphi_1(\omega,u+y)|
	\leq
	|D\varphi_1(\omega,y)|
	+
	\sup_{u,v\in\bar B(0,1)}
	\frac{|D\varphi_1(\omega,u+y)-D\varphi_1(\omega,v+y)|}
	{|u-v|^\delta}.
\end{align}
Therefore, by \eqref{es:firstderivativeexpectation} and H\"older's inequality,
\begin{align}
	\EE\|D\varphi_1(\omega,\cdot+y)\|_{C^0(\bar B(0,1))}
	\leq C,
\end{align}
uniformly in $y\in\RR^d$. In particular,
\begin{align}
	\EE\|D\varphi_1(\omega,\cdot)\|_{C^0(\bar B(0,1))}
	+
	\EE\|D\varphi_1(\omega,\cdot+y)\|_{C^0(\bar B(0,1))}
	\leq C.
\end{align}
Together with \eqref{Cdeltanorm}, we complete  the proof of the estimate \eqref{assu:intergrability-for-aysmptotic-stability}.

In combination with $\lambda_{\rm{top}}<0$,  by \cite[Theorem 5.1 (a)]{Ruelle.1979.IHESPM27} (see also \cite[Lemma 3.1]{FlandoliGessScheutzow.2017.PTRF511}),  we derive that  for every $\gamma\in (0,-\lambda_{\rm{top}})$, there are  measurable maps $\beta_2>\beta_1:\Omega\times \RR^d\to\RR_+$ such that for $\PP\otimes\mu^{\infty}$-a.e. $(\omega, y)\in \Omega\times\RR^d$,
\begin{align}
	\ca{O}(\omega,y)\coloneqq \{z\in B(y, \beta_1(\omega, y)):|\varphi_n(\omega,z)-\varphi_n(\omega,y)|\leq \beta_2(\omega,y){\rm{e}}^{-\gamma n}, \,\,\text{for all}\,\,n\geq 0\}
\end{align}
is an open neighborhood of $y$, $\PP$-a.s. Consider a countable family of balls of the form $B(y_m, r_m)$, where $(y_m, r_m)\in \bb{Q}^d\times \bb{Q}^+$. Then,  for $\PP\otimes\mu^{\infty}$-a.e. $(\omega, y)\in\Omega\times\RR^d$, it holds that $\ca{O}(\omega,y)\subseteq \cup_{m=1}^{\infty} B(y_m, r_m)$ and consequently, 
\begin{align}
	\Big\{\omega:\lim_{n\to\infty}{\text{diam}}\big(\varphi_{n}(\omega,B(y_m, r_m))\big)=0,& \,\,\text{for some}\,\,m\in\bb{N}\Big\}\\
	&\supseteq\Big\{\omega:\lim_{n\to\infty}{\text{diam}}\big(\varphi_{n}(\omega,\ca{O}(\omega,y))\big)=0\Big\}\,.
\end{align}
Thus, there exists $m_0\in\bb{N}$ such that  
\begin{align}
	\PP\Big(\omega:\lim_{n\to\infty}{\text{diam}}\big(\varphi_{n}(\omega,B(y_{m_0}, r_{m_0}))\big)=0\Big)>0\,.
\end{align}
The ball $B(y_{m_0},r_{m_0})$ is the set $U$ on which $\varphi$ is weakly asymptotically stable, which completes the proof.
\end{proof}
\begin{remark} 
	Consider the additive SDE on $\RR^d$,
	\begin{align}
		\d Y_t=-\nabla V(Y_t)\d t+\sigma \d W_t\,,
	\end{align}
	with $\sigma>0$ and satisfying one-sided Lipschitz condition
	\begin{equation}
		\langle \nabla V(x)-\nabla V(y), x-y\rangle \geq -\lambda |x-y|^2\,, \,\,\,\,\,\,\,x, y\in \RR^d
	\end{equation}
	for some $\lambda>0$.
	
	(i) \cite[Example 3.7]{FlandoliGessScheutzow.2017.PTRF511} Assume that  $V\in C^{2, \delta}_{\rm{loc}}(\RR^d)$ for some $\delta>0$ such that for some  $C_0>0, R_0>0, N\geq 0$,
	\begin{align}
		V(x)\geq &C_0 \log |x|, \,\,\,|D^2 V(x)|\leq C_0 |x|^N,  \,\,\,\,\,\,\,\text{for all} \,\,|x|\geq R_0,\\[1mm]
		 &\inf_{y: \,{\rm{minimizer\, of}} \,V}\Big\{\min_{|r|=1} \big\langle D^2 V(y)r, r\big\rangle\Big\}>0\,.
	\end{align}
Then, $\lambda_{\rm{top}}<0$ for sufficiently small $\sigma$.

(ii) \cite[Example 3.8]{FlandoliGessScheutzow.2017.PTRF511} Assume that  $V$ is radially symmetric, i.e.  there exists a convex function $g\in C_{\rm{loc}}^{2, \delta} $ such that $V(x)=g(|x|^2)$. In addition,  if
\begin{equation}
\int_{\RR^d}{\rm{e}}^{-\frac{2}{\sigma^2} V(x)} \d x<\infty\,,
\end{equation}
then, $\lambda_{\rm{top}}<0$ for any $\sigma>0$.
\end{remark}
\begin{remark}
	Consider the multiplicative SDE on $\RR^d$,
	\begin{equation}
		\d Y_t=b(Y_t)\d t+\sqrt{\varepsilon}\sigma (Y_t)\d W_t\,,
	\end{equation}
	and assume that it is strongly mixing with respect to the invariant probability measure $\mu^{\infty}_{\varepsilon}$ in the sense of \cref{def:stronglymixing}.
	Assume that $b\in C^{1, 1}(\RR^d;\RR^d)$  and $ \sigma\in C^{1, 1}(\RR^d;\RR^{d\times m})$. 	If \begin{equation}
		\lim_{\varepsilon\to 0}\int_{\RR^d}\sup_{|r|=1}\left\langle r, D b\left(y\right)r\right\rangle \d \mu_{\varepsilon}^{\infty}(y)<0\,,\label{eq:hypobolic}
	\end{equation}
	then  the top Lyapunov exponent $\lambda_{\rm{top}}^{\varepsilon}$ is negative for all  sufficiently small $\varepsilon$. Indeed, it follows from \eqref{es:Lyaformula} that 
	\begin{equation}
		\lambda_{\rm{top}}^{\varepsilon}\leq \int_{\RR^d} \sup_{|r|=1}\langle r, Db(y) r\rangle \d \mu_{\varepsilon}^{\infty}+\frac{\|\sigma\|_{C^{1, 1}(\RR^d)}^2\varepsilon}{2}\,.
	\end{equation}
	By the assumption above, the right-hand side is strictly negative for all sufficiently small $\varepsilon>0$, which yields the desired conclusion.
\end{remark}

\subsubsection{Two-point motion and reachability}\label{subsubsec:reachability}
In this subsection, we discuss how to establish \cref{assum:reachability} using condition (ii) of \cref{thm:weaksynchronisationSDE}. The proof is based on the two-point motion and combines geometric controllability under the H\"{o}rmander condition with the Stroock--Varadhan support theorem. Before proceeding to the proof, we introduce some useful notation.

\begin{definition}[Two-point Motion]
 We refer to the process $(Y_t^{y_1},Y_t^{y_2})_{t\geq 0}$ associated with 
\begin{align}
	\begin{cases}
		\d Y_t^{y_1}=b(Y_t^{y_1})\,\d t+\sigma(Y_t^{y_1})\,\d W_t\,,\,\,\,\,\,\,\,\,\,\,Y_0^{y_1}=y_1\in\RR^d\,,\\
		\d Y_t^{y_2}=b(Y_t^{y_2})\,\d t+\sigma(Y_t^{y_2})\,\d W_t\,,\,\,\,\,\,\,\,\,\,\,Y_0^{y_2}=y_2\in\RR^d\,,
	\end{cases}
\end{align}
 as the \emph{two-point motion}.
\end{definition}
  This  terminology  originates from the  general theory of stochastic flows, see \cite{Kunita.1997.346,BaxendaleStroock.1988.PTRF169,Carverhill.1985.S273}. Notice that the two processes are driven by the same Brownian motion on $\RR^m$. Equivalently, we may interpret it as a stochastic differential equation on $\RR^{2d}$ taking the form of  
\begin{align}\label{eq:two-pointmotionR2d}
\d\begin{pmatrix}
Y_t^{y_1}\\
Y_t^{y_2}
\end{pmatrix}
=
\begin{pmatrix}
		b(Y_t^{y_1})\\
		b(Y_t^{y_2})
\end{pmatrix}	
\,\d t
+
\begin{pmatrix}
		\sigma(Y_t^{y_1})\\
		\sigma(Y_t^{y_2})
\end{pmatrix}	
\,\d W_t\,,
\,\,\,\,\,\,\,\,\,
\begin{pmatrix}
		Y_0^{y_1}\\
		Y_0^{y_2}
\end{pmatrix}=\begin{pmatrix}
		y_1\\
		y_2
\end{pmatrix}\,.
\end{align}
Since we are working with the two-point motion, we use $\mathbf{y}=(y_1, y_2)^T, y_1, y_2\in\RR^d$ to denote a generic element in $\RR^{2d}$. Then the system \eqref{eq:two-pointmotionR2d} reduces to 

\vspace{-4mm}
\begin{equation}\label{eq:compact2ddimensionSDE}
	\,\mathrm{d} {\mathbf{Y}}_t^{\,\mathbf{y}} \, 
=
{\bm{b}}({\bf{Y}}_t^{\,\bf{y}})
\,\mathrm{d}t+
{\bm{\sigma}}({\bf{Y}}_t^{\,\mathbf{y}})
\,\mathrm{d}W_t\,,
\end{equation}
where \begin{align}\label{eq:BSigma}
{\bm{b}}:\RR^{2d}\ni\begin{pmatrix}
		y_1\\
		y_2
\end{pmatrix}\mapsto\begin{pmatrix}
	b(y_1)\\
	b(y_2)
\end{pmatrix}\in \RR^{2d}\,,\,\,\,\,\,\,\,\, {\bm{\sigma}}:\RR^{2d}\ni\begin{pmatrix}
		y_1\\
		y_2
\end{pmatrix}=\begin{pmatrix}
	\sigma(y_1)\\
	\sigma(y_2)
\end{pmatrix}\in \RR^{2d\times m}.
\end{align}

In order to apply geometric control theory  and the Stroock--Varadhan support theorem, it is convenient to reformulate \eqref{eq:two-pointmotionR2d} in Stratonovich form. For any ${\mathbf{y}}=({\mathbf{y}}^{(1)},\ldots, {\mathbf{y}}^{(i)},\ldots,{\mathbf{y}}^{(2d)})^T\in \RR^{2d}$, we define a family of vector fields on $\RR^{2d}$ as 

\vspace{-4mm}
{\small{
\begin{align}\label{eq:Sigma0}
	{U}_0^{(i)}({\mathbf{y}})\coloneqq
	\begin{cases}
		b^{(i)}({\mathbf{y}}^{(1)},\ldots, {\mathbf{y}}^{(d)})-\frac{1}{2}
				\displaystyle\sum_{k=1}^d\sum_{\ell=1}^m \big((\partial_{k}\sigma^{(i\ell)})\sigma^{(k\ell)}\big)({\mathbf{y}}^{(1)},\ldots, {\mathbf{y}}^{(d)}),\,\,\,\, \,\,\,\, \,\,\,\, \,\,\,\, \,\,\,\, \,\,\,\, \,\,\,\, \,\,\,\, \,\,\,\, 1 \leq i \leq d\,,\\[1mm]
		b^{(i-d)}({\mathbf{y}}^{(d+1)},\ldots, {\mathbf{y}}^{(2d)})-\frac{1}{2}\displaystyle\sum_{k=1}^d\sum_{\ell=1}^m \big((\partial_{k}\sigma^{((i-d)\ell)})\sigma^{(k\ell)}\big)({\mathbf{y}}^{(d+1)},\ldots, {\mathbf{y}}^{(2d)}), \,\,\,\,d+1 \leq i \leq 2d\,,
	\end{cases}
	% \begin{pmatrix}
	% 	b^{(1)}({\mathsf{y}}^{(1)},\ldots, {\mathsf{y}}^{(d)})\\
	% 	\vdots\\
	% 	b^{(d)}({\mathsf{y}}^{(1)},\ldots, {\mathsf{y}}^{(d)})\\
	% 	b^{(1)}({\mathsf{y}}^{(d+1)},\ldots, {\mathsf{y}}^{(2d)})\\
	% 	\vdots\\
	% 	b^{(d)}({\mathsf{y}}^{(d+1)},\ldots, {\mathsf{y}}^{(2d)})\\
	% \end{pmatrix}-\frac{1}{2}\begin{pmatrix}
	% 	\sum_{k=1}^d\sum_{\ell=1}^m \partial_{k}\sigma^{(1\ell)}({\mathsf{y}}^{(1)},\ldots, {\mathsf{y}}^{(d)})\sigma^{(k\ell)}({\mathsf{y}}^{(1)},\ldots, {\mathsf{y}}^{(d)})\\
	% 	\vdots\\
	% 		\sum_{k=1}^d\sum_{\ell=1}^m \partial_{k}\sigma^{(d\ell)}({\mathsf{y}}^{(1)},\ldots, {\mathsf{y}}^{(d)})\sigma^{(k\ell)}({\mathsf{y}}^{(1)},\ldots, {\mathsf{y}}^{(d)})\\
	% 		\sum_{k=1}^d\sum_{\ell=1}^m \partial_{k}\sigma^{(1\ell)}({\mathsf{y}}^{(d+1)},\ldots, {\mathsf{y}}^{(2d)})\sigma^{(k\ell)}({\mathsf{y}}^{(d+1)},\ldots, {\mathsf{y}}^{(2d)})\\
	% 	\vdots\\
	% 		\sum_{k=1}^d\sum_{\ell=1}^m \partial_{k}\sigma^{(d\ell)}({\mathsf{y}}^{(d+1)},\ldots, {\mathsf{y}}^{(2d)})\sigma^{(k\ell)}({\mathsf{y}}^{(d+1)},\ldots, {\mathsf{y}}^{(2d)})
	% \end{pmatrix}
\end{align}
}}
and for each $j=1,\ldots,m$,

\vspace{-4mm}
{\small{
		\begin{align}\label{eq:Sigma1d}
			{U}_j^{(i)}({\mathbf{y}})\coloneqq
			\begin{cases}
				\sigma^{(ij)}({\mathbf{y}}^{(1)},\ldots, {\mathbf{y}}^{(d)}),\,\,\,\, \,\,\,\, \,\,\,\, \,\,\,\, 1 \leq i \leq d\,,\\[1mm]
				\sigma^{((i-d)j)}({\mathbf{y}}^{(d+1)},\ldots, {\mathbf{y}}^{(2d)}), \,\,\,\,d+1 \leq i \leq 2d\,,
			\end{cases}
		\end{align}
%\begin{align}
%	{U}_j({\mathsf{y}})\coloneqq\begin{pmatrix}
%		\sigma^{(1j)}({\mathbf{y}}^{(1)},\ldots, {\mathbf{y}}^{(d)})\\
%		\vdots\\
%		\sigma^{(dj)}({\mathbf{y}}^{(1)},\ldots, {\mathbf{y}}^{(d)})\\
%		\sigma^{(1j)}({\mathbf{y}}^{(d+1)},\ldots, {\mathbf{y}}^{(2d)})\\
%		\vdots\\
%		\sigma^{(dj)}({\mathbf{y}}^{(d+1)},\ldots, {\mathbf{y}}^{(2d)})\\
%	\end{pmatrix}\,,
%\end{align}
}}
i.e. $U_j,j=1,\ldots,m,$ is the $j$-th column of the matrix $\bm{\sigma}$ of \eqref{eq:BSigma}. 

\vspace{4mm}
We now proceed to state the geometric controllability result which follows from a variant of the Chow--Rashevsky theorem.
\begin{definition}[H\"{o}rmander's condition]
	Let $\ca{M}$ be an $n$-dimensional connected differentiable manifold. Given a family of vector fields  on $\RR^{n}$,
	$\ca{U}=\{U_1,\ldots, U_m\}\,.$ We say that $\ca{U}$ satisfies \emph{H\"{o}rmander's condition} on $\ca{M}$ if for all $x\in\ca{M}$,
$${\mathrm{Lie}} _x\{f_1, \ldots, f_m\}=T_x\ca{M},$$
where $T_x\ca{M}$ denotes the tangent space of $\ca{M}$ at $x$, ${\mathrm{Lie}} _x\{f_1, \ldots, f_m\}$ is the smallest Lie algebra containing $f_1(x), \ldots, f_m(x)$, i.e. the linear span of all iterated Lie brackets generated by these vector fields.   Recall that for  vector fields $U,V$ on $\ca{M}$,  the Lie bracket $[U,V]$ is defined as 
$[U,V]=UV-VU.$
	%$[U,V](x)=\nabla V(x) U(x)-\nabla U(x) V(x)\,. $
\end{definition}

\begin{lemma}[Geometric controllability]\label{lem:geometricalcontrollability}	
 Given smooth vector fields $f_0, \ldots, f_m$ on $\ca{M}$,   consider a control affine system 
	\begin{equation}\label{eq:affinecontrolsystem}
		\dot{x}=f_0(x)+\sum_{i=1}^m \ell_i f_i(x)\,,
	\end{equation}
where $\ell = (\ell_1, \ell_2, \ldots, \ell_m)\in \mathbb{R}^m$ is a control. We denote by $x(t; x_0, \ell)$ the corresponding solution at time $t$ with initial condition $x_0$ driven by the control $\ell$. This control affine system defines a family of vector fields 
\begin{equation}
	\scr{F}=\Big\{f_0+\sum_{i=1}^m \ell_i f_i: \ell=\big(\ell_1, \ell_2, \ldots \ell_m\big)\in\RR^m\Big\}\,,
\end{equation}
and   the reachable set  from $x_0$ at time $t>0$ is given by  	 \footnote{Here, for any $t\geq 0$ and vector field $f$, ${\rm{e}}^{t f}$ is the flow generated by the vector field defined by ${\rm{e}}^{t f}(x_0)\coloneqq x(t; x_0, 1)$. }
\begin{align}\label{eq:trajectories}
	\ca{R}(t, x_0)\coloneqq \Big\{{\rm{e}}^{t_n \tilde{f}_{i_n}}\circ \cdots \circ {\rm{e}}^{t_1 \tilde{f}_{i_1}}x_0, \,\,\,\,\,\tilde{f}_{i_1},\ldots, \tilde{f}_{i_n}\in\scr{F}, t_1,  \ldots t_n\geq 0, {\text{such that}}~\sum_{k=1}^{n} t_k=t\,\Big\},
\end{align}
see \cite[Chapter 1, P.28, Definition 6]{bookGeometriccontroltheory}.
\begin{thmenumerate}[label=\upshape(\roman*)]
	\item \cite[Chapter 4, p.~106, Theorem 2]{bookGeometriccontroltheory} Assume that for all $x$ in $\ca{M}$, the vector fields $\{ f_1, \ldots, f_m\}$ satisfy
	    H\"{o}rmander's condition.
	  % $${\mathrm{Lie}} _x\{f_1, \ldots, f_m\}=T_x\ca{M},$$
	  %where $T_x\ca{M}$ denotes the tangent space of $\ca{M}$ at $x$.  \footnote{${\mathrm{Lie}} _x\{f_1, \ldots, f_m\}$ is the smallest Lie algebra containing $f_1(x), \ldots, f_m(x)$, i.e. the linear span of all iterated Lie brackets generated by these vector fields.   For any vector fields $U,V$ on $\RR^{d}$,  the Lie bracket $[U,V]$ is defined as 
	  	%	$[U,V](x)=\nabla V(x) U(x)-\nabla U(x) V(x)\,. $}
Then the control affine system \eqref{eq:affinecontrolsystem} 
satisfies 
\begin{equation}
\ca{R}(T, x_0)=\ca{M}\,,
\end{equation}
 for all $x_0\in \ca{M}$ and $T>0$.
%is strongly controllable in the sense that for any $T>0$,  any point of $\ca{M}$ is reachable from any other point by $\scr{F}$ in $T$.
	\item \cite[Chapter 4, p.~114, Theorem 5]{bookGeometriccontroltheory} Assume that the drift $f_0$ is recurrent \footnote{We call a vector field $f$ recurrent if it is complete in the sense that the integral curves through each point $x\in \ca{M}$ are defined for all $t\in \RR$ and  it has a dense subset of recurrent points. Recall a point $x\in\ca{M} $ is called recurrent if $x$ is contained in the positive limit set $\omega(x)\coloneqq \big\{y=\lim_{n\to\infty} {\text{e}}^{t_n f}(x): \lim_{n\to\infty}t_n=\infty\big\}$. },  and for all $x$ in $\ca{M}$, the vector fields $\{f_0, f_1, \ldots, f_m\}$ satisfy 
 H\"{o}rmander's condition.
	 Then  \eqref{eq:affinecontrolsystem}  
	 satisfies that for all $x_0, x_1\in \ca{M}$ , there exists $T>0$ such that 
	 \begin{equation}
	 x_1\in \ca{R}(T, x_0).
	 \end{equation}
	\end{thmenumerate}

\end{lemma}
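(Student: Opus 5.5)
Both parts are stated with precise references to \cite{bookGeometriccontroltheory}, so the plan is to reduce each claim to the standard orbit and Lie-saturate machinery there rather than redo the geometric control theory. The common starting point is the orbit theorem (Sussmann--Nagano): the orbit of $x_0$ under a family of smooth vector fields is an immersed submanifold whose tangent space contains the evaluated Lie algebra of the family. Under a Hörmander-type rank condition on the connected manifold $\ca{M}$, this orbit is therefore open, and a connectedness argument shows it is all of $\ca{M}$. What remains in each case is to pass from the orbit, which uses forward and backward flows, to the forward-time reachable set $\ca{R}(T,x_0)$.

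For (i), the key point is that the controls $\ell\in\RR^m$ are unbounded, so the drift $f_0$ can be made negligible. I would rescale time: on a short interval of length $t/\lambda$, the control $\lambda\ell$ yields the vector field $f_0+\lambda\sum_i\ell_if_i$. After rescaling, its flow approximates that of $\sum_i\ell_if_i$ as $\lambda\to\infty$, and the flows of $\pm f_i$ are available since the sign of $\ell_i$ is free. Hence the closure of the reachable set contains the reachable set of the symmetric driftless system $\{\pm f_1,\ldots,\pm f_m\}$. By Chow--Rashevsky and Hörmander's condition on $\{f_1,\ldots,f_m\}$, that driftless system reaches all of $\ca{M}$, and it does so in arbitrarily short time by the same rescaling.

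To go from closure to exact reachability at the prescribed time $T$, I would use that the reachable set from $x_0$ has nonempty interior, by the Krener-type rank theorem, together with a reverse-time argument. The reachable set of the time-reversed system contains an open neighbourhood of the target $x_1$, so it suffices to approximately reach an interior point of that neighbourhood and then steer exactly into $x_1$. Any time left over before $T$ is absorbed by inserting a short loop: go out along $f_j$ with control $\lambda$ and return along $-f_j$ with control $-\lambda$. The drift error of such a loop is $O(t)$ and can be corrected by the open-map property. I expect this exact-time bookkeeping to be the only delicate step; I would try the Lie-saturate formalism first, in which these closure and convexification steps are encoded as operations preserving the closure of the reachable set.

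For (ii), the controls still enter linearly, but the rank condition now involves $f_0$, so the drift cannot be ignored. The main obstacle is to show that $-f_0$ belongs to the Lie saturate. Recurrence of $f_0$ gives this: for a recurrent point $x$ there are times $t_n\to\infty$ with ${\rm e}^{t_nf_0}(x)\to x$. Flowing forward for a long time thus approximately realises ${\rm e}^{-sf_0}$ near a dense set of points, and by continuity the closure of the reachable set is invariant under the backward flow of $f_0$. Together with the $\pm f_i$ from the unbounded controls, the saturate is then a symmetric family satisfying Hörmander's condition, so its reachable set is $\ca{M}$. The same interior-point argument as in (i) upgrades approximate reachability to exact reachability, but only at some time $T$ depending on $x_0$ and $x_1$. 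That dependence is exactly the weaker conclusion stated in (ii).
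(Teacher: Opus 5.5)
Your route is the right one: unbounded controls put $\pm f_i$ into the Lie saturate, recurrence puts $-f_0$ there, and Krener's theorem upgrades density of reachable sets to exact reachability. This is the mechanism behind the two results of \cite{bookGeometriccontroltheory}, which the paper cites without further argument. For (ii), where $T$ is free, it works.

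The gap is the fixed-time conclusion $\ca R(T,x_0)=\ca M$ in (i).
\begin{itemize}
\item Krener's theorem only gives interior points of the set reachable in time \emph{at most} $s$. So the time $s_z$ needed to steer an approximately reached point $z$ into $x_1$ is not under your control, and the leftover $T-\varepsilon-s_z$ can be of order $T$.
\item Your loop cannot absorb such a leftover. To first order in $\tau$, the composition ${\rm e}^{\tau(f_0-\lambda f_j)}\circ{\rm e}^{\tau(f_0+\lambda f_j)}$ is just ${\rm e}^{2\tau f_0}$. It does nothing the control $\ell=0$ would not do, and the drift is never cancelled unless $f_0\in\operatorname{span}\{f_1,\dots,f_m\}$. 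Over an interval of non-small length the displacement is not small, so openness near one point cannot correct it.
\item Your claim that the backward reachable set contains a neighbourhood of $x_1$ is not available at that stage. It is a small-time local controllability property that Krener's theorem does not provide; Krener only gives nonempty interior.
\end{itemize}

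The missing ingredient is fixed-time accessibility. The ideal of $\mathrm{Lie}(\scr F)$ generated by the differences $X-Y$, $X,Y\in\scr F$, contains $f_i=(f_0+f_i)-f_0$, hence all of $\mathrm{Lie}(f_1,\dots,f_m)$. So it has full rank under the hypothesis of (i). By the Sussmann--Jurdjevic strong accessibility theorem applied to $-\scr F$, the set of points steered into $x_1$ in \emph{exactly} time $T-\varepsilon$ has nonempty interior $O$. (This can be derived from Krener's theorem for the time-augmented system $\dot x=f_0+\sum_i\ell_if_i$, $\dot t=1$ on $\ca M\times\RR$.)

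Pair this with density at exact time. Take any admissible trajectory from $x_0$, stop it at time $\varepsilon-\delta$, and spend the last $\delta$ units on your rescaled manoeuvre ${\rm e}^{\delta(f_0+(\tau/\delta)f_i)}$, letting $\delta\to0$. This shows $\overline{\ca R(\varepsilon,x_0)}$ is invariant under ${\rm e}^{\tau f_i}$ for all $\tau\in\RR$. Hence it is invariant under the group these flows generate, which acts transitively by H\"ormander's condition and the orbit theorem, so $\overline{\ca R(\varepsilon,x_0)}=\ca M$. Then $\ca R(\varepsilon,x_0)\cap O\neq\emptyset$ gives $x_1\in\ca R(T,x_0)$ with no leftover time. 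This is exactly what the strong Lie saturate encodes. The ordinary saturate you propose to use admits positive time rescalings and so loses track of $T$.

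In (ii), make the ``by continuity'' step explicit. You need recurrent points \emph{inside} the reachable set, and that comes from Krener's theorem: $\mathrm{int}\,\ca R(x_0)$ is dense in $\ca R(x_0)$. Apply recurrence at the recurrent points of this open set, then use continuity of ${\rm e}^{-sf_0}$ and closedness. Density of recurrent points in $\ca M$ alone says nothing about a reachable set with empty interior.
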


\vspace{3mm}
We now introduce the parabolic H\"{o}rmander condition which is widely used in the literature to ensure the existence of a smooth density of the solution (see~\cite[Theorem 1.3]{Hairer.2011}). We will later show that it can be used to verify the controllability condition (ii) of ~\cref{lem:geometricalcontrollability}.
\begin{definition}[Parabolic H\"{o}rmander's condition]
%	\item (Lie bracket) Given a manifold $\ca{M}$ and $x\in \ca{M}$, for any vector fields $U,V\in T_x\ca{M}$, where $T_x\ca{M}$ is the tangent space of $\ca{M}$ at $x$,  we define the Lie bracket $[U,V]:C^{\infty}(\ca{M})\to C^{\infty}(\ca{M})$ as
	%\begin{equation}
	%	[U,V]f=UVf-VUf\,.
	%\end{equation}
%In particular, 
Given a family of vector fields  on $\ca{M}$,
$\ca{U}=\{U_0, U_1,\ldots, U_m\}\,.$
We  inductively define a collection of vector fields
\begin{align} \label{eq:Liespace}
	{\scr{U}}_0\coloneqq\,\{U_j:\,\,j=1,\ldots,m\}\,,\,\,\,\,\,{\scr{U}}_{k+1}\coloneqq\,{\scr{U}}_{k}\cup \big\{[U,U_j], \,U\in {\scr{U}}_{k}, \,\,j=0,\ldots,m\big\}\,,
\end{align}
and vector spaces 
$
	{\scr{U}}_k(x)={\rm{span}}\{U(x): U\in \scr{U}_k\}\,.
$
We say that $\ca{U}$ satisfies the \emph{parabolic H\"{o}rmander condition}  if for all $x\in\ca{M}$,
\begin{equation}\label{eq:Hormander}
		\bigcup_{k\geq 1} {\scr{U}}_{k}(x)=T_x\ca{M}\,.
	\end{equation}
%and $\ca{U}$ satisfies H\"{o}rmander's condition if for all $y\in\RR^d$,
%\begin{equation}
%		\widetilde{{\scr{U}}}_0(y) \cup \Big(\cup_{k\geq 1} {\scr{U}}_{k}(y)\Big)=\RR^{d}\,,
%	\end{equation}
%where $\widetilde{{\scr{U}}}_0=\,\,\{U_j:\,\,0, 1,\ldots,n\}\,.$
	\worknote{It only proves the existence of a smooth density of the the solutions with respect to Lebesgue measure and has no information of the positivity or Harnack inequalities of the density under H\"{o}rmander's condition. We instead look for the help from support theorem. The support theorem requires the existence of a deterministic path, which is guaranteed by the Chow--Rashevsky theorem.}

\end{definition}

\vspace{3mm}

%Propagation path: \begin{definition}[Propagation path] Given a family of vector fields $$\ca{U}=\{U_0(y), U_1(y),\ldots, U_n(y)\}$$ on $\RR^{d},$ a propagation path associated with $\ca{U}$ (or $\ca{U}$-trajectory) is an absolutely continuous path $\gamma:[0,T]\to\RR^d$ such that
%\begin{align}
%\begin{cases}
%	\dot{\gamma}(t)=\sum_{j=0}^n\lambda_j(t)U_j(\gamma(t))\,,\,\,\,\,\,\,[0,T]\,,\\
%	\gamma(0)=y_0\,,
%\end{cases}
%\end{align}
%for piecewise constant real functions $\lambda_0,\lambda_1,\ldots, \lambda_n$. Based on propagation paths, we define the set of all points $y$ reachable from $y_0$:
%\begin{align}
%	\text{PathSet}(y_0,\RR^d)\coloneqq \{y\in\RR^d|\,\text{there exists propagation path}\,\gamma, %\gamma(0)=y_0, \gamma(T)=y \}\,.
%\end{align}
%\end{definition}

%\begin{lemma}[Chow--Rashevskii Theorem]\label{lem:Chow--Rashasky's theorem} Let $\ca{M} $ be a connected $d$-dimensional manifold. Given a family of vector fields $\ca{U}=\{U_0, U_1,\ldots, U_n\}$ on $\ca{M}$. If at each $y\in\ca{M}$, all these vector fields together with their iterated Lie brackets span the tangent space $T_y\ca{M}$ , then any two points of $\ca{M}$ can be connected by a piecewise smooth $\ca{U}$-path. Precisely speaking, for any $y_0, y_1 \in \ca{M}$, there is piecewise constant real functions $\lambda_0,\lambda_1,\ldots, \lambda_n$ such that
%\begin{align}
%\begin{cases}
%	\dot{\gamma}(t)=\sum_{j=0}^n\lambda_j(t)U_j(\gamma(t))\,,\,\,\,\,\,\,[0,T]\,,\\
%	\gamma(0)=y_0\,,\,\,\gamma(T)=y_1\,.
%\end{cases}
%\end{align}
%\end{lemma}

\vspace{3mm}
We now show how condition (ii) of \cref{thm:weaksynchronisationSDE} allows us to verify the reachability  \cref{assum:reachability}.

\begin{theorem}[Reachability]\label{thm:reachability}
	Consider the RDS $\varphi$ corresponding to multiplicative SDE \eqref{eq:multiSDE00}.
We further assume  that $b,  \sigma$ are smooth with  bounded derivatives up to order $2$ and

\begin{thmenumerate}[label=\upshape(\roman*)]
\item The vector fields $U_1, U_2,\ldots,U_m,$ corresponding to two-point motion defined in \eqref{eq:Sigma1d}, satisfy H\"{o}rmander's condition for ${\bf{y}}\in \RR^{2d}\setminus \Delta$,

\textbf {or}

\item  $U_0$ is recurrent and  $U_0, U_1, U_2,\ldots,U_m,$ corresponding to two-point motion defined in \eqref{eq:Sigma0} and \eqref{eq:Sigma1d}, satisfy  the parabolic H\"{o}rmander condition on $\RR^{2d}\setminus \Delta$.
\end{thmenumerate}
Then, \cref{assum:reachability} is satisfied.

\end{theorem}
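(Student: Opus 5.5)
The plan is to combine the Stroock--Varadhan support theorem for the two-point motion \eqref{eq:compact2ddimensionSDE}, written in Stratonovich form with vector fields $U_0,\dots,U_m$, with the controllability statements of \cref{lem:geometricalcontrollability} on the manifold $\ca{M}=\RR^{2d}\setminus\Delta$. Fix $\varepsilon>0$, $a\in\RR^d$ and $(y_1,y_2)\in\RR^{2d}$.

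\emph{Trivial cases.} If $(y_1,y_2)\in\Delta_{a,\varepsilon}$, then $\tau^a_{(y_1,y_2),\varepsilon}=0$ and there is nothing to prove. If $y_1=y_2$, the two trajectories coincide for all times, because the flow is a diffeomorphism. Reachability then reduces to the one-point motion entering $B(a,\varepsilon/2)$. This follows from the same argument applied to the one-point control system, whose vector fields are the projections of the $U_j$. Alternatively, one can use a nearby off-diagonal starting point together with continuity of the flow. So the main case is $y_1\neq y_2$.

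\emph{Step 1: a deterministic path.} Choose a target $\mathbf{z}=(z_1,z_2)\in\ca{M}$ with $z_1\neq z_2$ and $|z_1-a|+|z_2-a|<\varepsilon/2$. The set $\ca{M}$ is open and connected, since $2d\ge 2$ and $\Delta$ has codimension $d$; the degenerate case $d=1$, where $\ca{M}$ has two components, needs separate attention. If it arises, choose $\mathbf{z}$ in the same component as $(y_1,y_2)$, which is possible because $\Delta_{a,\varepsilon}$ meets both components.

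Under (i), \cref{lem:geometricalcontrollability}(i) with $f_j=U_j$ and drift $f_0=U_0$ shows that $\mathbf{z}$ lies in the reachable set $\ca{R}(T,(y_1,y_2))$ for any $T>0$. Formally the lemma assumes the bracket condition on the $f_1,\dots,f_m$ alone, which is exactly hypothesis (i). Under (ii), the parabolic H\"ormander condition on $\ca{M}$ implies the full H\"ormander condition for $\{U_0,\dots,U_m\}$. Since $U_0$ is recurrent, \cref{lem:geometricalcontrollability}(ii) gives a time $T$ with $\mathbf{z}\in\ca{R}(T,(y_1,y_2))$.

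In either case we get piecewise constant controls $\ell\colon[0,T]\to\RR^m$ whose controlled trajectory $\gamma$ solves $\dot\gamma=U_0(\gamma)+\sum_j\ell_j(t)U_j(\gamma)$, starts at $(y_1,y_2)$ and satisfies $\gamma(T)=\mathbf{z}$. The control $h(t)=\int_0^t\ell(s)\,\d s$ is absolutely continuous with square-integrable derivative, so it lies in the Cameron--Martin space.

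\emph{Step 2: support theorem.} The coefficients $\bm b,\bm\sigma$ are smooth with bounded derivatives up to order two, so the Stroock--Varadhan support theorem applies to \eqref{eq:compact2ddimensionSDE}. It states that the support of the law of $(\mathbf{Y}^{\mathbf y}_t)_{t\in[0,T]}$ in $C([0,T];\RR^{2d})$ contains every skeleton path $\gamma_h$ with $h$ in the Cameron--Martin space. Here one should check that the lemma's reachable set is built from concatenated flows, which gives exactly piecewise constant controls.

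Pick $\eta>0$ with $|z_1-a|+|z_2-a|+2\eta<\varepsilon$. The support theorem gives
\[
\PP\Big(\sup_{t\le T}|\mathbf{Y}_t^{\mathbf y}-\gamma(t)|<\eta\Big)>0 .
\]
On this event $\mathbf{Y}^{\mathbf y}_T\in\Delta_{a,\varepsilon}$, so $\tau^a_{(y_1,y_2),\varepsilon}\le T<\infty$, and \eqref{ass:reachability} follows.

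\emph{Main obstacle.} The delicate point is applying the controllability results on the noncompact open manifold $\ca{M}$, not on $\RR^{2d}$. First, the controlled trajectory must stay in $\ca{M}$. Because the vector fields are smooth, solutions of the control system exist uniquely, so a trajectory starting off $\Delta$ never reaches $\Delta$ in finite time, since $\Delta$ is invariant. The lemma's flows are therefore well defined on $\ca{M}$. Second, the completeness and recurrence notions for $U_0$ must be understood relative to $\ca{M}$. I would check this first, and handle the $d=1$ disconnectedness by the component argument of Step 1.
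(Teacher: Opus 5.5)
Your proposal is correct and follows essentially the same route as the paper: geometric controllability on $\RR^{2d}\setminus\Delta$ (component by component when $d=1$) via \cref{lem:geometricalcontrollability}, conversion of the concatenated flows into a piecewise-constant Cameron--Martin control, and the Stroock--Varadhan support theorem, which gives a positive probability of entering $\Delta_{a,\varepsilon}$ by time $T$. Two of your additions go beyond the paper: your treatment of diagonal starting points completes a case its proof never addresses (most directly, $\tau^{a}_{(y,y),\varepsilon}\le\tau^{a}_{(y,y''),\varepsilon/2}$ for any $y''\neq y$), and your choice of $\eta$ correctly handles the sum-of-distances form of $\Delta_{a,\varepsilon}$, which the paper's Euclidean estimate glosses over.
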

\begin{remark}

It is worthwhile to emphasise that $\bm{\sigma}$ is degenerate on the diagonal $\Delta\coloneqq \{(y, y)\in \RR^d\times \RR^d\}$ as for any  $\mathbf{y}\in \Delta$, the determinant
\begin{equation}
	\mathrm{det}\,(\bm{\sigma}\bm{\sigma}^T)(\mathbf{y})=0\,.
\end{equation}
Moreover, it is impossible for  $\ca{U}=\{U_0, U_1, \ldots, U_m\}$ to satisfy H\"{o}rmander condition  on  $\Delta$ since the largest dimension vector space that the Lie algebra of $\ca{U}$ can generate is $d$ (as opposed to the required $2d$).
\end{remark}

\begin{proof}[{Proof of \cref{thm:reachability}}]
 	We prove the theorem  by combining geometric controllability (cf.  \cref{lem:geometricalcontrollability})	 with probabilistic controllability, namely, the Stroock--Varadhan support theorem for the two-point motion. Since $b$ and $\sigma$ have bounded derivatives up to order 2, it follows from  the Stroock--Varadhan support theorem (see \cite[Theorem 3.5]{MilletSanz1994}) that, for any $\alpha\in [0, 1/2)$, the support of  the law
 	\worknote{Here, the support theorem requires the boundedness of the derivatives up to order 2, but still we can prove the reachability; if we just apply Hormander condition, which only implies the existence of the heat kernel. If we want the positivity of the heat kernel, we need to impose boundedness of all the derivatives for all orders. Then, we have the infinite differentiability of the density, which implies that the heat kernel would be positive anywhere, otherwise, it's not continuous anymore. Here, even though we use Hormander condition, but we don't require the boundedness of the derivatives greater than 3} $\PP\circ ({\bf{Y}}_{\cdot}^{\,\mathbf{y}})^{-1}$ of the two-point motion (see \eqref{eq:two-pointmotionR2d}) 
 	\begin{align}
 		\,\mathrm{d} {\bf{Y}}_t^{\,\mathbf{y}}=
 		\bm{b}({\bf{Y}}_t^{\,\mathbf{y}})\,\mathrm{d}t+
 		\bm{\sigma}({\bf{Y}}_t^{\,\mathbf{y}})\,\mathrm{d}W_t\,,\,\,\,\,\,\,{\bf{Y}}_0^{\,\mathbf{y}}=\mathbf{y}\,,
 	\end{align}	
 	is characterized by 
 	$$\overline{\{\ca{S}(h): h\in W_0^{1,2}([0,T]; \RR^{m})\}}^{ C^{\alpha}([0,T];\RR^{2d})},$$ where the closure is taken in $C^{\alpha}([0,T];\RR^{2d})$, and 
 $W_0^{1,2}([0, T]; \RR^m)$  denotes the space of absolutely continuous functions  $h: [0, T]\to\RR^m$ such that  $h_0=0$, and the  derivative $\dot{h}\in L^2([0,T];\RR^{m})$. Here,  ${\ca{S}} (h)$ is the solution to the controlled equation 
 	\begin{align}\label{eq:support}
 		\ca{S}(h)_t={\mathbf{y}}+\int_0^t \Big[\bm{b}(\ca{S}(h)_s)-\frac{1}{2}\big((\nabla{\bm{\sigma}}){\bm{\sigma}}\big)(\ca{S}(h)_s)\Big]\d s+\int_0^t {\bm{\sigma}}(\ca{S}(h)_s)\dot{h}_s\,\d s\,.
 	\end{align}
 	
 	Consider the following family of vector fields
 	\begin{align}
 			\scr{F}\coloneqq\Big\{U_0+\sum_{i=1}^m \ell_i U_i: \ell=\big(\ell_1, \ell_2, \ldots \ell_m\big)\in \RR^m\Big\}\,, \label{newF}
 	\end{align}
 	where $ U_0, U_1,\ldots,U_m,$ are  defined in \eqref{eq:Sigma0},  \eqref{eq:Sigma1d}. 
 	
 	(i) Let $d\geq 2$. Fix
 	$\bar{\mathbf{y}}=(\bar{y}_1, \bar{y}_2)^T\in \RR^{2d}\setminus\Delta,$ and choose $\hat{\mathbf{y}}=(\hat{y}_1, \hat{y}_2)^T\in \Delta_{a, \varepsilon/2}\setminus\Delta\subset\RR^{2d}\setminus\Delta \,.$ 
 	 Since the vector fields $ U_1, U_2,\ldots,U_m,$  satisfy  $	{\text{Lie}}_\mathbf{y} \{ U_1, \ldots, U_m\}=\RR^{2d}$ for  ${\bf{y}}\in \RR^{2d}\setminus \Delta$ and $ \RR^{2d}\setminus \Delta$ is connected, 
 	geometric controllability  yields that
 	the control-affine system
 	\begin{equation}
 		\dot{\mathbf{y}}=U_0(\mathbf{y})+\sum_{i=1}^m \ell_i U_i(\mathbf{y})
 	\end{equation}
 	satisfies
 	\begin{equation}
 		\ca{R}(T,\bar{\mathbf{y}})=
 	\RR^{2d}\setminus \Delta\,.
 	\end{equation}
 In particular, for any $T>0$, there exists  $\tilde{U}_1, \ldots, \tilde{U}_n\in \scr{F}$ (see \eqref{newF}) and $t_1, \ldots, t_n>0$ with $\sum_{k=1}^n t_k=T$ such that 
 	\begin{equation}
 		\hat{\mathbf{y}}={\rm{e}}^{t_n \tilde{U}_n}\circ \dots \circ {\rm{e}}^{t_1 \tilde{U}_1}	\bar{\mathbf{y}}\,.\label{eq:control} 
 	\end{equation}
 	 We next formulate   \eqref{eq:control} in terms of  a controlled ODE. Write   
 	 $$\tilde{U}_k=U_0+\sum_{i=1}^m \ell_{i, k} U_i,\,\,\,\,\,k=1, \ldots, n.$$ Then the path   $\mathbf{y}(\cdot)$ connecting $\bar{\mathbf{y}}$ to $\hat{\mathbf{y}}$ satisfies 
 	\begin{align}
 		\hat{\mathbf{y}}=\,\,&\bar{\mathbf{y}}+\int_0^{t_1} \Big[ U_0({\mathbf{y}}(s))+\sum_{i=1}^m\ell_{i, 1} U_i({\mathbf{y}}(s))\Big] \d s+\int_{t_1}^{t_1+t_2} \Big[U_0({\mathbf{y}}(s))+\sum_{i=1}^m\ell_{i, 2} U_i({\mathbf{y}}(s)) \Big]\d s\\
 		&+\dots+\int_{\sum_{i=1}^{n-1} t_i}^{\sum_{i=1}^{n} t_i} \Big[ U_0({\mathbf{y}}(s))+\sum_{i=1}^m\ell_{i, n} U_i({\mathbf{y}}(s))\Big] \d s\\
 		\eqqcolon\,\,& \bar{\mathbf{y}}+\int_0^T U_0({\mathbf{y}}(s)) \d s+\int_0^T \ell_1(s)U_1({\mathbf{y}}(s)) \d s+\dots+\int_0^T \ell_m(s)U_m({\mathbf{y}}(s)) \d s\,,\label{eq:support2}
 	\end{align}
 	where $\ell_j(\cdot), j=1, \ldots, m,$ is the piecewise constant function defined by 
 	\begin{equation}
 		\ell_j(t)={\bf{1}}_{[0, t_1]}(t)\ell_{j, 1} +{\bf{1}}_{[t_1, t_1+t_2]}(t)\ell_{j, 2}+\cdots+ {\bf{1}}_{[\sum_{i=1}^{n-1} t_i, \sum_{i=1}^{n} t_i]}(t) \ell_{j, n}\,.
 	\end{equation}
 	Define a function $h_{\cdot}\in W_0^{1, 2}([0, T]; \RR^{m})$ by $h_0=0$ and  $\dot{h}_\cdot=(\ell_1(\cdot),\ldots, \ell_m(\cdot))^T$.  It follows from \eqref{eq:support}  that the controlled path to \eqref{eq:support2} satisfies
 	$$({\mathbf{y}}(t))_{t\in [0, T]}\in \rm{supp}\left(\bb{P}\circ \big({\bf{Y}}_{t}^{\bar{\mathbf{y}}}\big)_{t\in [0, T]}^{-1}\right).$$ 	
 	Since $\mathbf{y}(T)=\hat{\mathbf{y}}\in
 	\Delta_{a,\varepsilon/2}\setminus\Delta$, consider the open neighbourhood
 	of $\mathbf{y}(\cdot)$ given by
 	\begin{align}
 		O\coloneqq\left\{
 		\mathbf{z}(\cdot)\in C^\alpha([0,T];\RR^{2d}):\sup_{t\in[0,T]}
 		|\mathbf{z}(t)-\mathbf{y}(t)|<\frac{\varepsilon}{2}
 		\right\}.
 	\end{align}
 	For every $\mathbf{z}(\cdot)\in O$, we have
 	\begin{align}
 		|\mathbf{z}(T)-(a,a)|\leq|\mathbf{z}(T)-\mathbf{y}(T)|+
 		|\hat{\mathbf{y}}-(a,a)|<\varepsilon,
 	\end{align}
 	and hence $\mathbf{z}(T)\in\Delta_{a,\varepsilon}$.
 	Therefore,
 	\begin{align}
 		\PP\big(
 		\tau_{(\bar y_1,\bar y_2),\varepsilon}^{a}<\infty
 		\big)\geq\,\,&\PP\left(({\bf Y}_{t}^{\,\bar{\mathbf y}})_{t\in[0,T]}\in O\right)
 		=\,\,\big(\PP\circ({\bf Y}_{\cdot}^{\,\bar{\mathbf y}})^{-1}
 		\big)(O)>0,
 	\end{align}
 	where the last inequality follows from
 	$\mathbf{y}(\cdot)\in
 	\operatorname{supp}\big(
 	\PP\circ({\bf Y}_{\cdot}^{\,\bar{\mathbf y}})^{-1}
 	\big)$
 	and the definition of the support of a probability measure.
 	
 	 When $d=1$, the set $\RR^2\setminus\Delta$ consists of two connected
 	components,
 	\begin{align}
 		\{(x,y)\in\RR^2:x<y\}
 		\qquad\text{and}\qquad
 		\{(x,y)\in\RR^2:x>y\}\,.
 	\end{align}
The geometric controllability argument is applied separately to
 	each component. Hence, 
 	\begin{align}
 		\ca R(T,\bar{\mathbf y})=
 		\begin{cases}
 			{(x,y)\in\RR^2:x<y},
 			&\text{if }\bar y_1<\bar y_2,\\ 
 			{(x,y)\in\RR^2:x>y},
 			&\text{if }\bar y_1>\bar y_2.
 		\end{cases}
 	\end{align}
 	For either case, one can choose
 	$\hat{\mathbf y}\in\Delta_{a,\varepsilon/2}\setminus\Delta$
 	in the same connected component as $\bar{\mathbf y}$, and the preceding support-theorem argument applies without change. This again yields
 	\begin{align}
 		\PP\big(
 		\tau_{(\bar y_1,\bar y_2),\varepsilon}^{a}<\infty
 		\big)>0.
 	\end{align}
 	
(ii) The parabolic H\"{o}rmander condition on $\RR^{2d}\setminus \Delta$ implies that 	\begin{align}
	{\text{Lie}} _{\mathbf{y}}\{U_0, U_1, \ldots, U_m\}=\RR^{2d}\,,
\end{align}
since the  vector fields involved in the parabolic H\"{o}rmander condition already generate the entire tangent space.
Using  \cref{lem:geometricalcontrollability} (ii), there exist a time $T_0$,  and 
vector fields $Z_1, \ldots, Z_n\in \scr{F}$ such that 
\begin{align}
	\tilde{\mathbf{y}}={\rm{e}}^{t_n Z_n}\circ \cdots\circ {\rm{e}}^{t_1 Z_1}\bar{\bf{y}},
\end{align}
where $t_1, \ldots, t_n\geq 0$ and $\sum_{i=1}^n t_i=T_0$.  From this point onwards, the proof proceeds exactly as in case (i) following \eqref{eq:control}.

\end{proof}

\subsection{Optimised multiplicative noise and synchronising synchronous realisations}
\label{sec:optimized-multiplicative-synchronisation}

Let $\mathbb{T}^d=\mathbb{R}^d/\mathbb{Z}^d$ be the $d$-dimensional
flat torus, and let
\begin{equation}
   \d  \mu( x)\coloneqq Z^{-1}{\rm{e}}^{-V(x)}\,\d x,
    \qquad
    Z\coloneqq\int_{\mathbb{T}^d}{\rm{e}}^{-V(x)}\,\d x,
    \label{eq:gibbs-measure-general}
\end{equation}
where $V\in C^\infty(\mathbb{T}^d)$. A standard approach to sampling
$\mu$ is to simulate an overdamped Langevin diffusion and approximate
expectations with respect to $\mu$ by long-time averages. The usual
overdamped Langevin equation corresponds to a constant diffusion
matrix. More generally, for a smooth map
\begin{equation}
     H:\mathbb{T}^d\longrightarrow \mathbb{S}^{++}_d,
\end{equation}
one may consider
\begin{equation}
    \d X_t
    =
    \bigl[-H(X_t)\nabla V(X_t)+\operatorname{div}H(X_t)\bigr]\,\d t
    +
    \sigma(X_t)\,\d W_t,
    \qquad
    \sigma(x)\sigma(x)^{\mathsf T}=2H(x),
    \label{eq:general-overdamped-langevin}
\end{equation}
where $W$ is an $m$-dimensional Brownian motion, with $m$ not
necessarily equal to $d$, and
\begin{equation}
    (\operatorname{div}H)_i=
    \sum_{j=1}^d \partial_j H_{ij}.
\end{equation}
The generator associated with \eqref{eq:general-overdamped-langevin}
is
\begin{equation}
    L_H f=
    \bigl[-H\nabla V+\operatorname{div}H\bigr]\cdot\nabla f
    +
    H:\nabla^2 f,
    \label{eq:general-generator}
\end{equation}
and the corresponding Fokker--Planck equation is
\begin{equation}
    \partial_t \rho_t
    =
    \nabla\cdot
    \left[
        H\bigl(\nabla\rho_t+\rho_t\nabla V\bigr)
    \right].
    \label{eq:general-fokker-planck}
\end{equation}
In particular, the Gibbs measure \eqref{eq:gibbs-measure-general} is
invariant, independently of the particular factorization
$\sigma\sigma^{\mathsf T}=2H$.

The work \cite{LelievrePavliotisRobinSantetStoltz2025} emphasizes that,
for a fixed target measure, there are infinitely many reversible
overdamped Langevin dynamics of the form
\eqref{eq:general-overdamped-langevin}, and proposes choosing $H$ so
as to maximize the spectral gap of $L_H$. The underlying heuristic is
to increase the diffusivity in low-probability barrier regions, where
rapid motion facilitates transitions between modes, and to decrease it
in regions where the target measure is concentrated. Under an
appropriate normalization of $H$, nonconstant diffusion coefficients
can substantially increase the spectral gap and hence the
$L^2(\mu)$ convergence rate.

Once $H$, and therefore the one-point generator
\eqref{eq:general-generator} and Fokker--Planck equation
\eqref{eq:general-fokker-planck}, have been fixed, there nevertheless
remains a second degree of freedom. Namely, the covariance
$2H=\sigma\sigma^{\mathsf T}$ does not determine the noise frame
$\sigma$. Distinct, possibly rectangular, square roots of the same
diffusion matrix define the same one-point Markov semigroup but
generally generate different stochastic flows. Consequently, they give
rise to different synchronous two-point motions.

This distinction is relevant for numerical sampling with
Richardson--Romberg extrapolation. The duplicated-diffusion analysis of
Lemaire, Pag\`es and Panloup \cite{LPP15} shows that,
when two discretisations are driven by the same noise, the asymptotic
variance of the extrapolated estimator depends on the long-time joint
law of the corresponding two-point process. In particular, if this
joint law converges to the diagonal coupling of the invariant measure,
then the leading discretization bias can be removed without increasing
the asymptotic variance. Strong mixing of the one-point motion alone
does not guarantee such a property, since the two copies may retain
nontrivial long-time correlations.

The weak-synchronisation framework developed in the present work provides a mechanism for proving the required
two-point mixing for multiplicative SDEs. Roughly speaking, a negative
top Lyapunov exponent yields local asymptotic stability, while a
H\"ormander condition for the two-point motion away from the diagonal
provides the required reachability. This raises the following natural
question: given a diffusion matrix $H$ chosen for favorable one-point
sampling properties, can one choose a noise frame
$\sigma\sigma^{\mathsf T}=2H$ whose synchronous stochastic flow is
synchronizing?

We illustrate this question on an explicit double-well example. The
same multiplicative-noise sampler admits a natural principal
realization whose synchronous two-point motion does not synchronize,
as well as an overcomplete realization whose two-point motion satisfies
H\"ormander's condition away from the diagonal and whose top Lyapunov
exponent is strictly negative.

	\subsubsection{Application: A double-well target and the homogenized diffusion coefficient}\label{subsec:double-well-homogenized}

	In this subsection, we expand on the model problem developed in \cite{LelievrePavliotisRobinSantetStoltz2025}, Section 4.1, Figure 3 (p. 16–17), and extend it by the aspect of choosing a synchronous, synchronizing coupling. We start by recalling the construction and the results of \cite{LelievrePavliotisRobinSantetStoltz2025} regarding the spectral gap of the one-point motion. % of Optimizing the diffusion coefficient of overdamped Langevin dynamics for the example \(V(q)=\cos(4\pi q)\), with \(D_{\mathrm{hom}}=e^V\) and the corresponding spectral-gap comparison.
	Consider the double-well potential
	\begin{equation}
		V(q)=\cos(4\pi q),
		\qquad q\in\mathbb{T},
		\label{eq:double-well-potential}
	\end{equation}
	with corresponding Gibbs measure
	\begin{equation}
		\d\mu(q)=Z^{-1}{\rm{e}}^{-V(q)}\,\d q,
		\qquad Z=\int_{\mathbb{T}}{\rm{e}}^{-V(q)}\,\d q.
		\label{eq:double-well-measure}
	\end{equation}	
	For a scalar diffusion coefficient
	$H:\mathbb{T}\to(0,\infty)$, the reversible Langevin equation reads
	\begin{equation}
		\d X_t=
		\bigl[-H(X_t)V'(X_t)+H'(X_t)\bigr]\,\d t
		+\sqrt{2H(X_t)}\,\d W_t,
		\label{eq:scalar-variable-diffusion}
	\end{equation}
	%with generator
	%\begin{equation}
	%    L_D f
	%    =
	%    Df''+(D'-DV')f'
	%    =
	%    e^V\frac{d}{dq}
	%    \left(
	%        De^{-V}f'
	%    \right).
	%    \label{eq:scalar-generator}
	%\end{equation}
	%Its Dirichlet form is
	%\begin{equation}
	%    \mathcal{E}_D(f,f)
	%    =
	%    -\langle f,L_Df\rangle_{L^2(\mu)}
	%    =
	%    \int_{\mathbb{T}}
	%        D(q)|f'(q)|^2\,\mu(dq),
	%    \label{eq:scalar-dirichlet-form}
	%\end{equation}
	with spectral gap $\Lambda(H)$.
	%\begin{equation}
	%    \Lambda(D)
	%    =
	%    \inf_{\substack{
			%        f\in H^1(\mu),\ \mu(f)=0\\
			%        f\not\equiv0
			%    }}
	%    \frac{
		%        \displaystyle
		%        \int_{\mathbb{T}}D(q)|f'(q)|^2\,\mu(dq)
		%    }{
		%        \displaystyle
		%        \int_{\mathbb{T}}f(q)^2\,\mu(dq)
		%    }.
	%    \label{eq:scalar-spectral-gap}
	%\end{equation}
	%
	%To prevent a trivial acceleration obtained by multiplying $D$ by a
	%constant, we impose the normalization
	%\begin{equation}
	%    \int_{\mathbb{T}}
	%        D(q)^2e^{-2V(q)}\,dq
	%    =
	%    1.
	%    \label{eq:diffusion-normalization}
	%\end{equation}
	%The corresponding normalized constant coefficient is
	%\begin{equation}
	%    D_{\mathrm{cst}}
	%    =
	%    \left(
	%        \int_{\mathbb{T}}e^{-2V(q)}\,dq
	%    \right)^{-1/2}.
	%    \label{eq:constant-diffusion}
	%\end{equation}
	The explicit optimiser of the one-dimensional homogenized problem
	studied in
	\cite{LelievrePavliotisRobinSantetStoltz2025} is
$
		H_{\mathrm{homo}}(q)={\rm{e}}^{V(q)}.
$
The homogenised sampler thus takes the form
	\begin{equation}
		\d X_t=\sqrt{2}\,{\rm{e}}^{V(X_t)/2}\,\d W_t,
		\label{eq:homogenized-principal-sde}
	\end{equation}
	with generator and Fokker--Planck equation
	\begin{equation}
		\ca{A}_{\mathrm{homo}}f={\rm{e}}^Vf'',\qquad
		\partial_t\rho_t=\partial_q^2({\rm{e}}^V\rho_t).
		\label{eq:homogenized-generator-fp}
	\end{equation}
	Since ${\rm{e}}^V\mu$ is a constant multiple of Lebesgue measure, $\mu$ is
	stationary for \eqref{eq:homogenized-generator-fp}.
	For the potential \eqref{eq:double-well-potential}, the numerical
	values reported in
	\cite{LelievrePavliotisRobinSantetStoltz2025} are
	\begin{equation}
		%\Lambda(D^\star)\simeq22.84,
		%\qquad
		\Lambda(H_{\mathrm{homo}})\simeq21.18,
		\qquad
		\Lambda(H_{\mathrm{cst}})\simeq8.46.
		\label{eq:double-well-spectral-gaps}
	\end{equation}
	%where $H^\star$ denotes the numerically optimised diffusion
	%coefficient. 
	%Thus
	%\begin{equation}
	%    \frac{
		%        \Lambda(D_{\mathrm{hom}})
		%    }{
		%        \Lambda(D_{\mathrm{cst}})
		%    }
	%    \simeq 2.50.
	%   \label{eq:spectral-gap-ratio}
	%\end{equation}
	The explicit multiplicative-noise coefficient $H_{\mathrm{homo}}$
	therefore yields a convergence rate approximately two and a half times
	that of the normalized additive-noise dynamics. The mechanism is transparent: since $H_{\mathrm{hom}}(q)
	\propto
	\frac{1}{\mu(q)},$
	the diffusivity is largest in the barrier regions, where $\mu$ is
	small, and smallest near the two modes.

	Let $m\geq1$, and let $
	\sigma
	=
	(\sigma_1,\ldots,\sigma_m):
	\mathbb{T}\longrightarrow\mathbb{R}^m$ 
	be a smooth noise frame satisfying
$
		|\sigma(q)|^2=
		\sum_{k=1}^m\sigma_k(q)^2=
		2{\rm{e}}^{V(q)}.
		\label{eq:noise-frame-constraint}
$
	Then the It\^o SDE
	\begin{equation}
		\d X_t=
		\sum_{k=1}^m
		\sigma_k(X_t)\,\d W_t^k
		\label{eq:general-noise-frame-sde}
	\end{equation}
	has generator $\ca{A}_{\mathrm{hom}}$ and therefore the same one-point
	Markov semigroup as \eqref{eq:homogenized-principal-sde}. For a fixed choice of $\sigma$, the associated
	synchronous two-point motion is
	\begin{equation}
		\begin{aligned}
			\d X_t=\,&
			\sum_{k=1}^m
			\sigma_k(X_t)\,\d W_t^k,
			\\
			\d Y_t
			=\,&
			\sum_{k=1}^m
			\sigma_k(Y_t)\,\d W_t^k,
		\end{aligned}
		\label{eq:synchronous-two-point-general}
	\end{equation}
	where both components are driven by the same $m$-dimensional Brownian
	motion. The corresponding noise vector fields on $\mathbb{T}^2$ are
	\begin{equation}
		U_k(x,y)
		=
		\sigma_k(x)\,\partial_x
		+
		\sigma_k(y)\,\partial_y,
		\qquad
		k=1,\ldots,m.
		\label{eq:two-point-vector-fields}
	\end{equation}
	
	Let $    \Delta
	=
	\{(q,q):q\in\mathbb{T}\}
	\label{eq:diagonal}
	$
	be the diagonal and let $
	\mu_\Delta
	=
	(\operatorname{Id},\operatorname{Id})_\#\mu,
	$
	be the diagonal coupling.

	\textbf{Failure of synchronisation for the principal noise frame}
	The principal square root of the scalar covariance
	$2H_{\mathrm{hom}}$ is
	\begin{equation}
		\sigma_{\mathrm{prin}}(q)=
		\sqrt{2}\,{\rm{e}}^{V(q)/2}.
		\label{eq:principal-noise-frame}
	\end{equation}
	It gives the canonical realization
	\eqref{eq:homogenized-principal-sde} considered in  \cite{LelievrePavliotisRobinSantetStoltz2025}, and the synchronous coupling
	\begin{equation}
		\begin{aligned}
		\d X_t=\,&
			\sigma_{\mathrm{prin}}(X_t)\,\d W_t,
			\\
			\d Y_t
			=\,&
			\sigma_{\mathrm{prin}}(Y_t)\,\d W_t.
		\end{aligned}
		\label{eq:principal-synchronous-coupling}
	\end{equation}
	
	The potential has the half-period symmetry
$
		V(q+\tfrac12)=V(q),
		\label{eq:half-period-potential}
$
	and thus,
	\begin{equation}
		\sigma_{\mathrm{prin}}(q+\tfrac12)=
		\sigma_{\mathrm{prin}}(q).
		\label{eq:half-period-sigma}
	\end{equation}
	\begin{proposition}
		\label{prop:principal-not-synchronizing}
		The principal synchronous coupling
		\eqref{eq:principal-synchronous-coupling} is not weakly synchronizing.
		Its two-point motion has at least two distinct invariant probability
		measures.
	\end{proposition}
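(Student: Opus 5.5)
The plan is to exploit the half-period symmetry \eqref{eq:half-period-sigma} to exhibit a second closed invariant set of the two-point motion, disjoint from the diagonal. Let $\tau:\mathbb{T}\to\mathbb{T}$, $\tau(q)=q+\tfrac12$. Since $\sigma_{\mathrm{prin}}\circ\tau=\sigma_{\mathrm{prin}}$ and the SDE \eqref{eq:homogenized-principal-sde} has no drift, the process $\tau(X_t)=X_t+\tfrac12$ solves the same equation,
\[
\d(X_t+\tfrac12)=\sigma_{\mathrm{prin}}(X_t)\,\d W_t=\sigma_{\mathrm{prin}}(X_t+\tfrac12)\,\d W_t .
\]
The coefficient $\sigma_{\mathrm{prin}}$ is smooth on the compact torus, so pathwise uniqueness holds. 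Hence for every $x$ we have $\varphi(t,\omega,x+\tfrac12)=\varphi(t,\omega,x)+\tfrac12$ for all $t\ge0$, $\PP$-a.s. (first for fixed $x$, then for all $x$ simultaneously by continuity of the flow). Consequently the anti-diagonal set $\Delta^{1/2}:=\{(q,q+\tfrac12):q\in\mathbb{T}\}$ is invariant for the synchronous two-point motion \eqref{eq:principal-synchronous-coupling}. In particular, two trajectories started at $x$ and $x+\tfrac12$ stay at torus distance exactly $\tfrac12$ forever.

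The invariant measures are then constructed as follows. The one-point motion has $\mu$ as an invariant measure; this was checked after \eqref{eq:homogenized-generator-fp}. The first invariant measure of the two-point motion is the diagonal coupling $\mu_\Delta$, whose invariance is immediate because $\Delta$ is invariant. The second is the shifted coupling $\mu_{\Delta}^{1/2}:=(\mathrm{Id},\tau)_\#\mu$. It is invariant because, if $X_0\sim\mu$ and $Y_0=X_0+\tfrac12$, then $(X_t,Y_t)=(X_t,X_t+\tfrac12)$ with $X_t\sim\mu$. The two measures are distinct because they are supported on the disjoint closed sets $\Delta$ and $\Delta^{1/2}$. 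This settles the second claim of \cref{prop:principal-not-synchronizing}.

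For the failure of weak synchronisation, argue by contradiction. Suppose $A(\omega)=\{a(\omega)\}$ is a weak point attractor. Then for any two points $x,y$, both $\dist(\varphi(t,\theta_{-t}\omega,x),a(\omega))\to0$ and $\dist(\varphi(t,\theta_{-t}\omega,y),a(\omega))\to0$ in probability. Hence $\dist(\varphi(t,\theta_{-t}\omega,x),\varphi(t,\theta_{-t}\omega,y))\to0$ in probability. Since $\theta_{-t}$ preserves $\PP$, the same holds for $\dist(\varphi(t,\cdot,x),\varphi(t,\cdot,y))$. Taking $y=x+\tfrac12$ contradicts the identity $\dist(\varphi(t,\omega,x),\varphi(t,\omega,x+\tfrac12))=\tfrac12$. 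Equivalently, generalised strong mixing towards $\mu_\Delta$ fails for the initial law $\mu_\Delta^{1/2}$. (The paper defines attractors on a Polish space; the torus is compact, so $A$ is automatically random compact and no moment issues arise.)

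The only point needing care is the pathwise identity $\varphi(t,\omega,x+\tfrac12)=\varphi(t,\omega,x)+\tfrac12$ for all $x$ simultaneously, outside one null set. I would obtain it from strong uniqueness applied to countably many rational $x$, and then extend it using continuity in $x$ of the $C^1$ flow. Everything else is direct.
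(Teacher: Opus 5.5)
Your proof is correct. For the existence of two invariant measures it is the paper's argument: the half-period symmetry \eqref{eq:half-period-sigma} and pathwise uniqueness give $Y_t=X_t+\tfrac12$. Hence $(\mathrm{Id},\tau)_\#\mu$ is an invariant coupling supported off $\Delta$, and so differs from $\mu_\Delta$.

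You differ from the paper in how you rule out weak synchronisation. The paper only says that non-uniqueness of the two-point invariant measure precludes it. That step tacitly relies on the argument of \cref{cor:two-point-unique-invariant}: weak synchronisation together with one-point strong mixing forces the two-point laws, and hence any invariant coupling, to equal $\mu_\Delta$. You instead argue directly from the definition of a weak point attractor. A singleton attractor would give $\dist(\varphi(t,\cdot,x),\varphi(t,\cdot,y))\to0$ in probability, by the triangle inequality and $\theta_{-t}$-invariance of $\PP$. This is impossible for $y=x+\tfrac12$, since that distance is identically $\tfrac12$.

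Your route is self-contained, needs no mixing input, and makes the paper's ``in particular'' explicit. The paper's route is shorter, and it phrases the failure as non-uniqueness of $\pi^{(2)}$, which is what matters for Richardson--Romberg extrapolation.

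Incidentally, the simultaneous-in-$x$ identity you flag as the delicate point is not needed. The contradiction uses a single pair $(x,x+\tfrac12)$. The invariance of $(\mathrm{Id},\tau)_\#\mu$ follows from pathwise uniqueness with an initial datum independent of $W$, as in the paper.
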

	
	\begin{proof}
		Let $\tau_{1/2}(q)=q+\tfrac12$ denote translation by one half on $\mathbb{T}$, and define
		\begin{equation}
			\mu_{\Delta_{1/2}}=
			(\operatorname{Id},\tau_{1/2})_\#\mu,
			\qquad
			\mu_{\Delta_{1/2}}(\d x, \d y)
			=\mu(\d x)\delta_{x+1/2}(\d y).
			\label{eq:half-period-invariant-measure}
		\end{equation}
		Suppose that $Y_0=X_0+1/2$. If $X$ solves the first equation in
		\eqref{eq:principal-synchronous-coupling}, then, by
		\eqref{eq:half-period-sigma}, the process $X+1/2$ solves the same SDE as $Y$, with the same
		initial condition. Pathwise uniqueness yields $
		Y_t=X_t+\tfrac12
		\text{ for all }t\geq0
		\text{ a.s..}$ Hence, if $X_0\sim\mu$, then the joint
		law of $(X_t,Y_t)$ is $\mu_{\Delta_{1/2}}$ for every $t\geq0$ and
		$\mu_{1/2}$ is invariant.
		
	Since $\operatorname{supp}(\mu_{\Delta_{1/2}})
		=\big\{(q,q+\tfrac12):q\in\mathbb{T}\big\},$ which is disjoint from $\Delta$, then $\mu_{\Delta_{1/2}}\neq\mu_\Delta.$
		The two-point motion therefore has nonunique invariant probability
		measures, and in particular cannot be weakly synchronising.
	\end{proof}
	
	In fact,  the failure of synchronisation is not caused
	by a lack of local contraction because the principal realisation
	itself has a negative top Lyapunov exponent.  Taking the spatial derivative and applying It\^{o}'s formula, Birkhoff's ergodic theorem yields 
	\begin{align}
		\lambda_{\mathrm{top}}^{\rm{prin}}
		&=
		-\frac{1}{2}
		\int_{\mathbb{T}}
		|\sigma_{\mathrm{prin}}'(q)|^2\,\d\mu(q)=
		-\frac{1}{4Z}
		\int_{\mathbb{T}}
		|V'(q)|^2\,\d q
		=
		-\frac{2\pi^2}{Z}
		<0.
		\label{eq:principal-top-lyapunov}
	\end{align}
	The essential reason is that  the invariant graph
	$y=x+1/2$ prevents the two-point dynamics from reaching a neighborhood
	of the diagonal. This illustrates why negativity of the top Lyapunov
	exponent must be supplemented by a reachability or irreducibility
	condition for the two-point motion.
	
	\textbf{Synchronisation by an overcomplete H\"{o}rmander noise frame}
	
	The following construction is motivated by the general observation that the Lagrangian representation of a
	second-order equation is not unique: one may enlarge the driving noise and choose a representation whose two-point vector fields satisfy a H\"ormander condition.
	
Define the three-component noise frame
	\begin{equation}	\label{eq:overcomplete-frame}
		\sigma(q)=
		{\rm{e}}^{V(q)/2}
		\begin{pmatrix}
			1, \cos(2\pi q), 
			\sin(2\pi q)
		\end{pmatrix},
		\qquad
		q\in\mathbb{T}.
	\end{equation}
	Then
$
		\sum_{k=0}^2\sigma_k(q)^2
	=
		{\rm{e}}^{V(q)}
		\left[
		1+\cos^2(2\pi q)+\sin^2(2\pi q)
		\right]
		=
		2{\rm{e}}^{V(q)}.
$
Therefore, the SDE
	\begin{equation}
		\d X_t
		=
		{\rm{e}}^{V(X_t)/2}
		\left[
		\d W_t^0
		+
		\cos(2\pi X_t)\,\d W_t^1
		+
		\sin(2\pi X_t)\,\d W_t^2
		\right]
		\label{eq:overcomplete-sde}
	\end{equation}
	has exactly the same generator $\ca{A}_{\mathrm{homo}}$, invariant measure
	$\mu$, Fokker--Planck equation, and spectral gap as
	\eqref{eq:homogenized-principal-sde}. We next consider the two-point motion of \eqref{eq:overcomplete-sde}, 
	\begin{equation}
		\begin{aligned}
			\d X_t
			=\,&
			\sum_{k=0}^2
			\sigma_k(X_t)\,\d W_t^k,
			\\
			\d Y_t
			=\,&
			\sum_{k=0}^2
			\sigma_k(Y_t)\,\d W_t^k.
		\end{aligned}
		\label{eq:overcomplete-two-point}
	\end{equation}
	The associated two-point noise vector fields are
	\begin{equation}
		U_k(x,y)
		=
		\sigma_k(x)\,\partial_x
		+
		\sigma_k(y)\,\partial_y,
		\qquad
		k=0,1,2.
		\label{eq:overcomplete-two-point-fields}
	\end{equation}
	
	In the following we validate the assumptions of  \cref{thm:weaksynchronisationSDE}  for this particular example, thereby deducing synchronisation.

	\textbf{Negativity of the top Lyapunov exponent}
	\label{subsec:negative-lyapunov}
	
	Let $\varphi_t(\omega,q)$ denote the RDS generated by \eqref{eq:overcomplete-sde}. %Since the coefficients are smooth on the compact torus,
	%$\varphi_t(\omega,\cdot)$ is a $C^1$ stochastic flow of
	%diffeomorphisms. 
	Taking the spatial derivative and applying It\^{o}'s formula, Birkhoff's ergodic theorem yields
	\begin{equation}
		\lambda_{\mathrm{top}}\leq 
		-\frac{1}{2}
		\int_{\mathbb{T}}
		|\sigma'(q)|^2\,\d\mu(q).
		\label{eq:top-lyapunov-general}
	\end{equation}
Rewrite
$a(q)={\rm{e}}^{V(q)/2},
	g(q)=\bigl(
	1,\cos(2\pi q),\sin(2\pi q)\bigr).
$ Then, $\sigma=ag$. Since $
	|g(q)|^2=2,
	\,\,
	g(q)\cdot g'(q)=0,
	\,\,
	|g'(q)|^2=4\pi^2,$
	we obtain
	\begin{equation}
		|\sigma'(q)|^2=a(q)^2
		\left|\frac{V'(q)}{2}g(q)+g'(q)\right|^2
		={\rm{e}}^{V(q)}
		\left[\frac{1}{2}|V'(q)|^2+4\pi^2
		\right].
		\label{eq:sigma-prime-norm}
	\end{equation}
	Using
	$\d \mu(q)=Z^{-1}{\rm{e}}^{-V(q)}\,\d q$  and $V(q)=\cos(4\pi q)$, we have
	\begin{equation}	\label{eq:top-lyapunov-before-evaluation}
		\lambda_{\mathrm{top}}\leq -\frac{1}{2Z}
		\int_{\mathbb{T}}
		\left[\frac12|V'(q)|^2+4\pi^2
		\right]\d q\leq 	-\frac{4\pi^2}{Z}<0.
	\end{equation}	
	\textbf{H\"{o}rmander condition}
	
	\begin{proposition}
		\label{prop:hormander-off-diagonal}
		The vector fields $U_0,U_1,U_2$ span
		$T_{(x,y)}\mathbb{T}^2$ at every
		$(x,y)\in\mathbb{T}^2\setminus\Delta$. In particular, the synchronous
		two-point motion \eqref{eq:overcomplete-two-point}  satisfies H\"{o}rmander's condition, away from the diagonal.
	\end{proposition}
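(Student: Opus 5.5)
We argue directly at the level of the noise fields, with no brackets: it suffices to show that already the three vectors $U_0(x,y),U_1(x,y),U_2(x,y)\in\RR^2$ span $\RR^2$ whenever $x\neq y$ in $\mathbb{T}$. Writing $U_k(x,y)=(\sigma_k(x),\sigma_k(y))$ and factoring out the positive scalars ${\rm e}^{V(x)/2}$ and ${\rm e}^{V(y)/2}$ (multiplying the first row by ${\rm e}^{-V(x)/2}$ and the second by ${\rm e}^{-V(y)/2}$ is an invertible change of coordinates), the span equals $\RR^2$ if and only if the $2\times 3$ matrix
\begin{equation}
	G(x,y)=\begin{pmatrix}
		1 & \cos(2\pi x) & \sin(2\pi x)\\
		1 & \cos(2\pi y) & \sin(2\pi y)
	\end{pmatrix}
\end{equation}
has rank $2$. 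Equivalently, its rows $g(x)$ and $g(y)$, with $g$ as defined before \eqref{eq:sigma-prime-norm}, must be linearly independent.

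Suppose $\alpha g(x)+\beta g(y)=0$. The first coordinate gives $\beta=-\alpha$. The remaining coordinates then give $\alpha\bigl(e^{2\pi i x}-e^{2\pi i y}\bigr)=0$ in complex notation. The map $q\mapsto e^{2\pi i q}$ is injective on $\mathbb{T}=\RR/\mathbb{Z}$, so $x\neq y$ forces $\alpha=0$, hence $\beta=0$. Therefore $G(x,y)$ has rank $2$, and $\operatorname{span}\{U_0,U_1,U_2\}(x,y)=T_{(x,y)}\mathbb{T}^2$ for all $(x,y)\notin\Delta$. Since the zeroth-order part of the Lie algebra is already the full tangent space, Hörmander's condition holds on $\mathbb{T}^2\setminus\Delta$.

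There is no real obstacle. The only point needing care is that we work on the torus rather than on $\RR$. The injectivity of $e^{2\pi i\cdot}$ modulo $1$ is exactly what rules out the half-period degeneracy that broke the principal frame in \cref{prop:principal-not-synchronizing}: for the overcomplete frame, $g(x)\neq g(x+\tfrac12)$ because $(\cos,\sin)$ changes sign. It may also help to remark that $\mathbb{T}^2\setminus\Delta$ is connected, being an open annulus, so that the geometric controllability of \cref{lem:geometricalcontrollability}(i) later applies on a single component.
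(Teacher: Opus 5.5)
Your proof is correct and follows the paper's argument: you factor out the positive weights $\mathrm{e}^{V(x)/2}$ and $\mathrm{e}^{V(y)/2}$, reduce to linear independence of $g(x)$ and $g(y)$, use the first coordinate to force equal coefficients, and conclude from injectivity of $q\mapsto(\cos 2\pi q,\sin 2\pi q)$ on $\mathbb{T}$. Your extra remark that $\mathbb{T}^2\setminus\Delta$ is a connected open annulus is also correct, though it is not needed for this proposition.
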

	
	\begin{proof}
		The values of the three vector fields are the columns of the matrix
		\begin{equation}
		U(x,y)=
			\begin{pmatrix}
				{\rm{e}}^{V(x)/2}
				&
				{\rm{e}}^{V(x)/2}\cos(2\pi x)
				&
				{\rm{e}}^{V(x)/2}\sin(2\pi x)
				\\
				{\rm{e}}^{V(y)/2}
				&
				{\rm{e}}^{V(y)/2}\cos(2\pi y)
				&
				{\rm{e}}^{V(y)/2}\sin(2\pi y)
			\end{pmatrix}.
			\label{eq:two-point-noise-matrix}
		\end{equation}
		Since the exponential factors are strictly positive,
		$U(x,y)$ has rank less than $2$ if and only if
	$g(x)=
		\bigl(1,\cos(2\pi x),\sin(2\pi x)\bigr)
$
		and $
		g(y)=\bigl(1,\cos(2\pi y),\sin(2\pi y)\bigr)
$
		are linearly dependent. Suppose that
	$g(y)=c\,g(x)
	$
		for some $c\in\mathbb{R}$. Comparing the first components gives
		$c=1$. Hence
$
	\cos(2\pi x)=\cos(2\pi y),\,
\sin(2\pi x)=\sin(2\pi y),
$
which implies that $x=y$ in $\mathbb{T}$. Therefore,
$
			\operatorname{rank}U(x,y)=2\,,
			\text{whenever }x\neq y,
$
and 
	\begin{equation}
		\operatorname{span}
		\{U_0(x,y),U_1(x,y),U_2(x,y)\}
		=T_{(x,y)}\mathbb{T}^2
	\end{equation}
		off the diagonal. 
	\end{proof}

	\textbf{Weak synchronisation and uniqueness of the two-point invariant measure}
	We now verify the hypotheses of the weak-synchronisation criterion developed in this work. The theorem is stated there for
	multiplicative SDEs on $\mathbb{R}^d$. We use its compact-manifold
	counterpart on $\mathbb{T}$; the proof is unchanged after replacing
	the Euclidean geometric-control and support arguments by their
	standard manifold versions.
	
	The required hypotheses are as follows.
	
	\begin{enumerate}
		\item
		\emph{Regularity.}
		The coefficients $\sigma_0,\sigma_1,\sigma_2$ in
		\eqref{eq:overcomplete-frame} are $C^\infty$ and, since the
		state space is compact, have bounded derivatives of every order.
		
		\item
		\emph{Strong mixing of the one-point motion.}
		Its generator is
	$
		\ca{A}_{\mathrm{hom}}=
		{\rm{e}}^V\partial_q^2$.
		Since the diffusion is uniformly elliptic on the connected compact
		manifold $\mathbb{T}$,  it  has the unique invariant
		probability measure $\mu$ and its transition semigroup is strongly
		mixing.
		
		\item
		\emph{Negative top Lyapunov exponent.}
		 \eqref{eq:top-lyapunov-before-evaluation} gives
$
		\lambda_{\mathrm{top}}<0.
$
		
		\item
		\emph{H\"ormander condition away from the diagonal.}
		Proposition~\ref{prop:hormander-off-diagonal} gives
		\[
		\operatorname{span}
		\{
		U_0(x,y),U_1(x,y),U_2(x,y)
		\}
		=
		T_{(x,y)}\mathbb{T}^2
		\]
		for every $(x,y)\notin\Delta$.
	\end{enumerate}
	
	The synchronisation criterion therefore implies that the RDS generated by \eqref{eq:overcomplete-sde} satisfies
	\begin{equation}
		d_{\mathbb{T}}
		\bigl(
		\varphi_t(x),\varphi_t(y)
		\bigr)
		\longrightarrow0
		\qquad
		\text{in probability}
		\label{eq:overcomplete-weak-synchronisation}
	\end{equation}
	for every $x,y\in\mathbb{T}$.
	
	Let $P_t^{(2)}$ denote the transition semigroup of the two-point
	motion,
	\begin{equation}
		P_t^{(2)}F(x,y)=
		\mathbb{E}
		\left[F\bigl(
		\varphi_t(x),\varphi_t(y)
		\bigr)\right]\,,
		\label{eq:two-point-semigroup}
	\end{equation}
	where $F$ is any bounded measurable function on $\bb{T}\times \bb{T}$.
	\begin{corollary}
		\label{cor:two-point-unique-invariant}
		For every $x,y\in\mathbb{T}$,
		\begin{equation}
			P_t^{(2)}
			\bigl((x,y),\cdot
			\bigr)
			\xrightarrow[t\to\infty]{w}
			\mu_\Delta\,.
			\label{eq:two-point-convergence}
		\end{equation}
		In particular, $\mu_\Delta$ is the unique invariant probability measure of the two-point motion.
	\end{corollary}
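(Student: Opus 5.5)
The plan is to derive the two-point convergence from the weak synchronisation \eqref{eq:overcomplete-weak-synchronisation} and the strong mixing of the one-point motion, by comparing $F(\varphi_t(x),\varphi_t(y))$ with $F(\varphi_t(x),\varphi_t(x))$. Since weak convergence on the compact space $\mathbb{T}^2$ is tested by continuous functions, it suffices to consider $F\in C(\mathbb{T}\times\mathbb{T})$. Each such $F$ is uniformly continuous because $\mathbb{T}^2$ is compact. I then write
\[
P_t^{(2)}F(x,y)-\int F\,\d\mu_\Delta
=\mathbb{E}\bigl[F(\varphi_t(x),\varphi_t(y))-F(\varphi_t(x),\varphi_t(x))\bigr]
+\Bigl(\mathbb{E}\bigl[f(\varphi_t(x))\bigr]-\int f\,\d\mu\Bigr),
\]
where $f(q)\coloneqq F(q,q)$ is continuous on $\mathbb{T}$ and satisfies $\int F\,\d\mu_\Delta=\int f\,\d\mu$.

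The second term tends to zero by strong mixing of the one-point semigroup (item 2 of the hypothesis list above). For the first term, I fix $\eta>0$ and choose $\delta>0$ from uniform continuity so that $|F(a,b)-F(a,a)|\le\eta$ whenever $d_{\mathbb{T}}(a,b)\le\delta$. This gives
\[
\bigl|\mathbb{E}[\cdots]\bigr|\le \eta+2\|F\|_\infty\,\mathbb{P}\bigl(d_{\mathbb{T}}(\varphi_t(x),\varphi_t(y))>\delta\bigr),
\]
and the probability on the right vanishes as $t\to\infty$ by \eqref{eq:overcomplete-weak-synchronisation}. Letting $\eta\downarrow0$ yields \eqref{eq:two-point-convergence}.

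For uniqueness, let $\pi^{(2)}$ be any invariant probability measure of $P^{(2)}_t$. Then $\int F\,\d\pi^{(2)}=\int P_t^{(2)}F\,\d\pi^{(2)}$ for all $t$. Since $|P_t^{(2)}F|\le\|F\|_\infty$ and $P_t^{(2)}F\to\int F\,\d\mu_\Delta$ pointwise, dominated convergence gives $\int F\,\d\pi^{(2)}=\int F\,\d\mu_\Delta$. Hence $\pi^{(2)}=\mu_\Delta$. The invariance of $\mu_\Delta$ itself follows directly: if $X_0=Y_0\sim\mu$, pathwise uniqueness gives $X_t=Y_t$, so the joint law stays $\mu_\Delta$.

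The main obstacle is not in this corollary but upstream, in the validity of \eqref{eq:overcomplete-weak-synchronisation}. On the torus the Euclidean \cref{thm:weaksynchronisationSDE} must be transferred to the manifold setting. There are two delicate points. First, one needs the convergence in probability of the forward distance $d_{\mathbb{T}}(\varphi_t(x),\varphi_t(y))$, whereas the attractor statement concerns pullback trajectories. Second, \cref{thm:weaksynchronisationSDE} requires connectedness of $\mathbb{T}^2\setminus\Delta$, which does hold here, unlike in $\mathbb{R}^2$.

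The first point is resolved by the stationarity of $\theta_t$. The law of $(\varphi_t(\theta_{-t}\omega,x),\varphi_t(\theta_{-t}\omega,y))$ equals that of $(\varphi_t(\omega,x),\varphi_t(\omega,y))$. Both pullback points converge in probability to the singleton attractor $a(\omega)$, so their distance tends to zero in probability. The forward distance has the same law, so it tends to zero in probability as well.
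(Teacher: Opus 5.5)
Your proof is correct, but it reaches the conclusion by a different mechanism than the paper.

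The paper argues by compactness. The laws $P_t^{(2)}((x,y),\cdot)$ are tight on $\mathbb{T}^2$, and every subsequential limit $\nu$ satisfies $\nu(\Delta)=1$ by \eqref{eq:overcomplete-weak-synchronisation}. Such a $\nu$ also has first marginal $\mu$ by one-point mixing, and a measure concentrated on $\Delta$ with first marginal $\mu$ must be $\mu_\Delta$.

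You instead estimate $\bigl|P_t^{(2)}F(x,y)-\int F\,\d\mu_\Delta\bigr|$ directly. You split it into a synchronisation error, controlled by uniform continuity of $F$, and the one-point mixing error for $f(q)=F(q,q)$. What this buys:
\begin{itemize}
\item It bypasses Prokhorov's theorem and the identification of couplings concentrated on the diagonal.
\item It also bypasses the Portmanteau step from convergence in probability to $\nu(\Delta)=1$, which the paper leaves implicit.
\item It carries over verbatim to non-compact state spaces. Bounded Lipschitz test functions already determine weak convergence, so no tightness is needed.
\item It transfers rates: for Lipschitz $F$, the first term is at most $\operatorname{Lip}(F)\,\mathbb{E}\,d_{\mathbb{T}}(\varphi_t(x),\varphi_t(y))$.
\end{itemize}
The paper's subsequence argument, in turn, is the template it reuses for the McKean--Vlasov case in \cref{thm:weak_synchro_generalised_strong_mixing}.

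Your uniqueness step coincides with the paper's. You additionally check that $\mu_\Delta$ is itself invariant, which the paper leaves implicit.

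Your upstream remarks are also sound. The paper simply asserts the forward convergence \eqref{eq:overcomplete-weak-synchronisation}. Your derivation of it from the singleton pullback attractor, via $\theta_{-t}$-invariance of $\mathbb{P}$, is the correct justification. Also, $\mathbb{T}^2\setminus\Delta$ is indeed connected: the map $(x,y)\mapsto(x,y-x)$ sends $\Delta$ to $\mathbb{T}\times\{0\}$, so the complement is an open annulus. Connectedness is not actually essential, though, as the paper's separate treatment of the two components of $\mathbb{R}^2\setminus\Delta$ shows.
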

	
	\begin{proof}
		Strong mixing of the one-point process yields as $t\to\infty$,
		\begin{equation}
			\operatorname{Law}(\varphi_t(x))
			\xrightarrow[]{w}\mu,
			\qquad
			\operatorname{Law}(\varphi_t(y))
			\xrightarrow[]{w}\mu.
			\label{eq:marginal-convergence}
		\end{equation}
		Since $\mathbb{T}^2$ is compact, the family of joint laws of
		$(\varphi_t(x),\varphi_t(y))$ is tight. Let $\nu$ be any subsequential
		weak limit. By
		\eqref{eq:overcomplete-weak-synchronisation},
$
		\nu(\Delta)=1.
$
	By 
	\eqref{eq:marginal-convergence}, the first marginal of $\nu$ is $\mu$, and thus,
	$$
		\d\nu(x, y)= \d\mu_\Delta(x, y)
		=
		\d\mu(x)\d\delta_x(y).
	$$
		Hence every subsequential limit equals $\mu_\Delta$, which proves
		\eqref{eq:two-point-convergence}. Let $\rho$ be an invariant probability measure for
		$P_t^{(2)}$. For every $F\in C(\mathbb{T}^2)$,
		\[
		\int_{\mathbb{T}^2}F\,\d\rho
		=
		\int_{\mathbb{T}^2}
		P_t^{(2)}F(x,y)\,\d\rho(x, y).
		\]
		By \eqref{eq:two-point-convergence} and bounded convergence theorem,
		\[
		\lim_{t\to\infty}
		\int_{\mathbb{T}^2}
		P_t^{(2)}F(x,y)\,\d\rho(x, y)
		=
		\int_{\mathbb{T}^2}F\,\d\mu_\Delta.
		\]
		Therefore $\rho=\mu_\Delta$.
	\end{proof}
	
\section{Weak synchronisation for  multiplicative McKean--Vlasov systems}\label{sec4:weaksynchroforMVSDE}

In this section, we provide sufficient conditions for weak synchronisation of the following multiplicative McKean--Vlasov system
\begin{subequations}\label{system3}
	\begin{align}
		\d Y_t = & \,\,b(Y_t, \mu_t) \, \d t + \sigma(Y_t, \mu_t) \,\d W_t\,,\,\,\,\,\,\,\,\,\,\,\,\,\,\,\,\,\,\,\,\,\,\,\,\,\,\,\,\,\,\,\,\,\,\,\,\,\,\,\,\,\,\,\,\,\,\,\,\,\,\,\,\,\,Y_0=y\in \RR^d,\label{eq: MVRDE3}\\
		\partial_t \mu_t =&-\nabla\cdot(b(\cdot,\mu_t)\mu_t)+\frac{1}{2}D^2:\left((\sigma\sigma^T)(\cdot,\mu_t)\mu_t\right)\,, \,\,\,\,\,\,\,\,\,\,\,\,\,\,\,\mu_0=\mu\in\ca{P}_2(\RR^d)\label{eq:MVPDE3}\,.
	\end{align}
\end{subequations}
 It turns out that weak synchronisation of the coupled McKean--Vlasov system \eqref{system3} can be reduced to that of the corresponding frozen  SDE,  provided that \eqref{eq:MVPDE3} exhibits exponential ergodicity and certain additional conditions are satisfied.

\subsection{Generation of RDS for coupled McKean--Vlasov systems}
Recall that in \cite[Theorem 3.10]{gess2025random}, we  proved that, under suitable assumptions (see Assumptions 3.1, 3.7, 3.9) on the coefficients, there exists an  RDS $\varphi^{\rm{MV}}$ associated with the coupled system \eqref{system3}. More precisely,  
 \begin{align}
	&\varphi^{\rm{MV}}: [0,\infty) \times \Omega \times \RR^d\times \ca{P}_2(\RR^d) \longrightarrow \RR^d\times \ca{P}_2(\RR^d)\,,\\
	&\,\,\,\,\,\,\,\,\,\,\,\,\,\,\,(t,\omega,y,\mu)
	\longmapsto  
	( \phi_{t}(\omega,y,\mu),  \label{definedRDS}
	S_{t}(\mu))\,,
\end{align}
is an RDS over the metric dynamical system $(\Omega,\mathcal{G},\mathbb{P},(\theta_t)_{t\in \R})$ constructed in \cref{sec:generationRDSforSDEs}, 
where $S_t:\cP_2(\R^d)\to \cP_2(\R^d)$ is the solution map of~\eqref{eq:MVPDE3} and  $\phi_t(\omega,\cdot,\cdot): \R^d\times \cP_2(\R^d)\to \R^d$ is the solution map of~\eqref{eq: MVRDE3}.
Furthermore, we let $\ca{F}$ be the $\PP$-completion of the $\sigma$-algebra $\ca{G}$ and  $\bb{F}\coloneqq (\ca{F}_{s, t})_{-\infty<s\leq t<\infty}$, where 
$
	\ca{F}_{s, t}\coloneqq \sigma(W_u-W_v: s\leq u\leq v\leq t)\vee \ca{N}, \,-\infty<s\leq t<\infty\,,
$
and $\ca{N}$ denotes the collection of $\ca{F}$-null sets.
Then the tuple 
$$\big(\Omega, \mathcal{G}, \bb{F}, \mathbb{P}, (\theta_t)_{t\in \R}, \varphi^{\rm{MV}}\big)$$ is a white-noise RDS. 

\begin{assumption}\label{assum:existence_of_mu_infty}
	Assume that there exist a constant $\lambda>0$ and  a probability measure $\mu^{\infty}\in\ca{P}_2(\RR^d)$ such that, for any $\mu\in \ca{P}_2(\RR^d)$, the solution to \eqref{eq:MVPDE3} satisfies
	\begin{align}
		\ca{W}_2(S_t\mu, \mu^{\infty})\leq C_{\mu, \mu^{\infty}} {\rm{e}}^{-\lambda t}\,,
	\end{align}
	for some constant depending on $\mu, \mu^{\infty}$.
\end{assumption}
Under \cref{assum:existence_of_mu_infty}, we define 
$$b^{\infty}(\cdot)\coloneqq b(\cdot,\mu^{\i})\,,\;\;\;\;\;\sigma^{\infty}(\cdot)\coloneqq\sigma(\cdot,\mu^{\infty})\,,$$ 
and introduce the following frozen SDE
\begin{equation}\label{eq:inftySDE}
	\d Y_t^{\infty} = \,b^{\infty}(Y_t^{\infty}) \, \d t + \sigma^{\infty}(Y_t^{\infty}) \,\d W_t\,,\,\,\,\,\,\,\,\,\,Y_0^{\infty}=y,
\end{equation}
which gives rise to a Markov semigroup $(Q_t)_{t\geq 0}$. Since \eqref{definedRDS}
defines a white-noise RDS $$(\Omega, \mathcal{G}, \bb{F}, \mathbb{P}, (\theta_t)_{t\in \R}, \varphi^{\rm{MV}})$$ associated with the coupled system \eqref{system3}, fixing the measure component at $\mu^{\infty}$ yields a white-noise RDS 
$$\varphi^{{\rm{Fro}}}:[0,\i)\times \Omega \times \RR^d\to \RR^d$$ 
on the same metric dynamical system $\big(\Omega, \mathcal{G}, \bb{F}, \mathbb{P}, (\theta_t)_{t\in \R}\big)$
corresponding to the frozen SDE \eqref{eq:inftySDE}.

\subsection{Lifted semigroup and weak synchronisation for  coupled McKean--Vlasov systems}\label{subsec:liftsemigroup}
In this subsection, we introduce  the lifted semigroup  associated with the coupled McKean--Vlasov system and study weak synchronisation within this framework.

\begin{definition}[Lifted semigroup]
	For any $(y, \mu)\in \RR^d\times \ca{P}_2(\RR^d)$, and any bounded measurable function $f$ on $\RR^d\times \ca{P}_2(\RR^d)$, define
\begin{align}\label{liftedsemigroup}
		{\bf{P}}_t f(y, \mu)\coloneqq\EE f(Y_t, \mu_t)\,,
	\end{align}
where $Y_t$ and $\mu_t$ solve  \eqref{eq: MVRDE3} and \eqref{eq:MVPDE3},  
respectively. It follows from \cite[Proposition 4.8]{RenRocknerWang.2022.JDE1} that 	$({\bf{P}}_t)_{t\geq 0}$ \footnote{We emphasise the use of the semigroups $(S_t)_{t\geq 0}$, $(Q_t)_{t\geq 0}$, and $({\bf P}_t)_{t\geq 0}$, which are associated with the MVPDE \eqref{eq:MVPDE3}, the frozen SDE \eqref{eq:inftySDE}, and the coupled McKean–Vlasov system \eqref{eq: MVRDE3}–\eqref{eq:MVPDE3}, respectively.} is a time-homogeneous Markov semigroup on $\mathbb{R}^d \times \mathcal{P}_2(\mathbb{R}^d)$.
\end{definition}

\begin{lemma}
	The probability measures $\mu^{\infty}$ and $\mu^{\infty}\otimes \delta_{\mu^{\infty}}$ are  invariant  for    $(Y_t^{\infty})_{t\geq 0}$ and  $({\bf{P}}_t)_{t\geq 0}$, respectively.  
\end{lemma}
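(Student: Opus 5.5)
The proof splits into two parts: first show that $\mu^{\infty}$ is a stationary point of the nonlinear Fokker--Planck flow $S_t$, and then read off both invariance statements from this.

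\textbf{Stationarity of $\mu^\infty$.} By \cref{assum:existence_of_mu_infty}, $\ca{W}_2(S_t\mu,\mu^\infty)\to0$ for every $\mu\in\ca{P}_2(\RR^d)$. Apply this with $\mu=\mu^\infty$ and $\mu=S_s\mu^\infty$, and use the semigroup property $S_{t+s}=S_tS_s$ together with continuity of $S_s$ in $\ca{W}_2$. The latter is available from the well-posedness theory behind \cite[Theorem 3.10]{gess2025random}. Then
\begin{align}
S_s\mu^\infty=\lim_{t\to\infty}S_sS_t\mu^\infty=\lim_{t\to\infty}S_{t+s}\mu^\infty=\mu^\infty,
\end{align}
so $S_s\mu^\infty=\mu^\infty$ for all $s\geq0$.

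\textbf{Invariance for the frozen SDE.} Let $Y_0\sim\mu^\infty$ be independent of $W$, and run \eqref{eq: MVRDE3} with $\mu_0=\mu^\infty$. Since $\mu_t=S_t\mu^\infty=\mu^\infty$ for all $t$, the coefficients reduce to $b^\infty$ and $\sigma^\infty$. By pathwise uniqueness, $Y_t$ therefore coincides with the solution $Y^\infty_t$ of \eqref{eq:inftySDE}.

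On the other hand, the measure equation \eqref{eq:MVPDE3} is exactly the Fokker--Planck equation for the law of the particle started with law $\mu_0$. This is the consistency of the McKean--Vlasov system, namely $\Law(\phi_t(\cdot,Y_0,\mu_0))=S_t\mu_0$ when $Y_0\sim\mu_0$. Hence $\Law(Y^\infty_t)=S_t\mu^\infty=\mu^\infty$. Equivalently, $\int Q_tf\,\d\mu^\infty=\int f\,\d\mu^\infty$ for all bounded measurable $f$.

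The one point requiring care is this identification of $\Law(Y_t)$ with $S_t\mu_0$. It follows from uniqueness of the linear Fokker--Planck equation with the frozen coefficients $b(\cdot,\mu_t)$ and $\sigma(\cdot,\mu_t)$, which is Lipschitz/regular under the standing assumptions. I expect this to be the main, though routine, obstacle.

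\textbf{Invariance for the lifted semigroup.} For bounded measurable $f$ on $\RR^d\times\ca{P}_2(\RR^d)$, the definition \eqref{liftedsemigroup} and Fubini's theorem give
\begin{align}
\int {\bf P}_tf\,\d(\mu^\infty\otimes\delta_{\mu^\infty})=\int_{\RR^d}\EE f\big(\phi_t(\cdot,y,\mu^\infty),S_t\mu^\infty\big)\,\mu^\infty(\d y).
\end{align}
Since $S_t\mu^\infty=\mu^\infty$ and $\phi_t(\cdot,y,\mu^\infty)=\varphi^{\rm Fro}_t(\cdot,y)$, this equals $\int Q_t f(\cdot,\mu^\infty)\,\d\mu^\infty$. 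By the previous step, the last expression equals $\int f(y,\mu^\infty)\,\mu^\infty(\d y)$, which is the integral of $f$ against $\mu^\infty\otimes\delta_{\mu^\infty}$. This proves that $\mu^\infty\otimes\delta_{\mu^\infty}$ is invariant for $({\bf P}_t)_{t\geq0}$.
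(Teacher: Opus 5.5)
Your proof is correct. The paper states this lemma without proof. The ingredients it relies on appear only implicitly elsewhere: the claim in the proof of \cref{prop:characterisation_of_E_0} that \cref{assum:existence_of_mu_infty} makes $\mu^\infty$ the stationary solution of \eqref{eq:MVPDE3}, and the identity $\varphi^{\mathrm{MV}}(t,\omega,(\cdot,\cdot))(\mu^\infty\otimes\delta_{\mu^\infty})=\varphi^{\mathrm{Fro}}(t,\omega,\cdot)\mu^\infty\otimes\delta_{\mu^\infty}$ used at the start of the proof of \cref{thm:weak_synchronisation_MVSDE}.

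Your argument follows this same route and is more careful on two points the paper passes over:
\begin{enumerate}
\item The convergence $S_t\mu^\infty\to\mu^\infty$ alone yields $S_s\mu^\infty=\mu^\infty$ only together with the semigroup property and the $\ca{W}_2$-continuity of $S_s$.
\item Invariance of $\mu^\infty$ for $(Q_t)_{t\geq0}$ needs the consistency $\Law(Y_t)=S_t\mu_0$ for $Y_0\sim\mu_0$. You correctly reduce this to uniqueness for the linear Fokker--Planck equation with coefficients $b(\cdot,\mu^\infty)$ and $\sigma\sigma^T(\cdot,\mu^\infty)$, of which $\mu^\infty$ is then a stationary solution.
\end{enumerate}

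A minor remark: in the first step you only need \cref{assum:existence_of_mu_infty} with $\mu=\mu^\infty$, since $S_tS_s\mu^\infty=S_{t+s}\mu^\infty$ anyway.
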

Let 
	\begin{equation}
		{\bf{E}}_0\coloneqq \,\,\Big\{(y, \mu)\in \RR^d\times \ca{P}_2(\RR^d): {\bf{P}}_t\big((y, \mu), \cdot\big)\xrightarrow[t\to\infty]{w}\footnotemark\mu^{\infty}\otimes\delta_{\mu^{\infty}}\Big\}\,.
\end{equation}
\footnotetext{Here, weak convergence is understood with respect to the topology induced by the metric
$d_2\big((x,\mu),(y,\nu)\big)
\coloneqq |x-y|+\mathcal W_2(\mu,\nu)$ on $\RR^d\times\ca P_2(\RR^d)$. More precisely, for every
$f\in C_b\big(\RR^d\times\ca P_2(\RR^d), d_2\big),$
$
\lim_{t\to\infty}{\bf{P}}_t f(y,\mu)=
\int_{\RR^d\times\ca P_2(\RR^d)}
f(x,\nu),
\big(\mu^\infty\otimes\delta_{\mu^\infty}\big)(\d x,\d\nu).
$}
\begin{theorem}\label{thm:weak_synchronisation_MVSDE}
	Assume that \cref{assum:existence_of_mu_infty} holds. Let 
	\begin{equation}
\mu^{\rm{Fro}}_{\omega}\coloneqq\,\,\lim_{t_k\to\infty}\varphi^{{\rm{Fro}}}(t_k,\theta_{-t_k}\omega, \cdot)\mu^{\infty}\,.
	\end{equation}
If ${\rm{supp}}\big(\mu^{{\rm{Fro}}}_{\omega}\big)$ is almost surely compact, then $${\rm{supp}}\big(\mu^{{\rm{Fro}}}_{\omega}\big)\times \{\mu^{\infty}\}$$
is a weak point attractor for ${\bf{E}}_0$. In particular, if ${\rm{supp}}\big(\mu^{{\rm{Fro}}}_{\omega}\big)$ is almost surely a singleton, then conditional weak synchronisation occurs for $\varphi^{\rm{MV}}$ on $\bf{E}_0$.
\end{theorem}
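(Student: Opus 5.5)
To prove \cref{thm:weak_synchronisation_MVSDE} we must check, for the candidate set $\mathbf{A}(\omega)\coloneqq{\rm{supp}}(\mu^{\rm Fro}_\omega)\times\{\mu^\infty\}$, the three defining properties of a weak point ${\bf E}_0$-attractor. These are: random compactness, $\varphi^{\rm MV}$-invariance, and attraction in probability of every $(y,\mu)\in{\bf E}_0$.

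\textbf{Compactness and invariance.} Compactness is immediate from the hypothesis. Measurability of $\omega\mapsto{\rm dist}((x,\nu),\mathbf A(\omega))$ follows from the fact that $\mu^{\rm Fro}_\omega$ is an $\mathcal F_0$-measurable random measure. Indeed,
\[
{\rm dist}\big((x,\nu),\mathbf A(\omega)\big)={\rm dist}\big(x,{\rm supp}\,\mu^{\rm Fro}_\omega\big)+\ca W_2(\nu,\mu^\infty).
\]
For invariance, $S_t\mu^\infty=\mu^\infty$ and $\phi_t(\omega,\cdot,\mu^\infty)=\varphi^{\rm Fro}(t,\omega,\cdot)$. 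Since $\mu^{\rm Fro}$ is an invariant Markov measure of the frozen RDS, we have $\varphi^{\rm Fro}(t,\omega)\mu^{\rm Fro}_\omega=\mu^{\rm Fro}_{\theta_t\omega}$. Because $\varphi^{\rm Fro}(t,\omega,\cdot)$ is a homeomorphism (a diffeomorphism, as in \cref{sec:generationRDSforSDEs}), supports are mapped onto supports. This gives $\varphi^{\rm MV}(t,\omega,\mathbf A(\omega))=\mathbf A(\theta_t\omega)$.

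\textbf{Attraction.} Fix $(y,\mu)\in{\bf E}_0$. Since $\mathbf A$ has trivial measure component, the measure component of the distance converges deterministically:
\[
\ca W_2(S_t\mu,\mu^\infty)\le C e^{-\lambda t}\to0
\]
by \cref{assum:existence_of_mu_infty}. It therefore remains to show
\[
{\rm dist}\big(\phi_t(\theta_{-t}\omega,y,\mu),{\rm supp}\,\mu^{\rm Fro}_\omega\big)\to0
\]
in probability. We follow the argument of \cite[Lemma 2.19/Prop.~2.20]{FlandoliGessScheutzow.2017.PTRF511}, where attraction toward the support of the Markov measure is obtained from strong mixing.

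By measure preservation of $\theta_t$, the pair $(\phi_t(\theta_{-t}\omega,y,\mu),\mu^{\rm Fro}_\omega)$ has the same law as $(\phi_t(\omega,y,\mu),\mu^{\rm Fro}_{\theta_t\omega})$. The first entry is $\mathcal F_{0,t}$-measurable; the second is $\mathcal F_t$-measurable. We then exploit the independence of $\mathcal F_{-\infty,0}$ and $\mathcal F_{0,t}$ in the pulled-back picture. Set $\Phi_\varepsilon(x,m)\coloneqq \mathbf 1\{{\rm dist}(x,{\rm supp}\,m)>\varepsilon\}$ and write $\varphi^{\rm Fro}_{s}(\theta_{-s}\omega)\mu^\infty$ for the approximants to $\mu^{\rm Fro}_\omega$. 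For $s\ge t$ large, decompose the pulled-back flow at time $-t$:
\[
\mu^{\rm Fro}_\omega=\varphi^{\rm Fro}(t,\theta_{-t}\omega)\,\mu^{\rm Fro}_{\theta_{-t}\omega}.
\]
Here $\mu^{\rm Fro}_{\theta_{-t}\omega}$ is independent of $\mathcal F_{-t,0}$. Conditionally on it, the point $\phi_t(\theta_{-t}\omega,y,\mu)$ and the sample points $\varphi^{\rm Fro}(t,\theta_{-t}\omega,z)$, $z\sim\mu^{\rm Fro}_{\theta_{-t}\omega}$, are driven by the same noise on $[-t,0]$. We then apply the ${\bf E}_0$ hypothesis together with the mixing of the frozen semigroup. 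For $(y,\mu)\in{\bf E}_0$, the law of $\phi_t(\cdot,y,\mu)$ converges to $\mu^\infty$, and so does the law of the sample points. This yields
\[
\mathbb P\big({\rm dist}(\phi_t(\theta_{-t}\omega,y,\mu),{\rm supp}\,\mu^{\rm Fro}_\omega)>\varepsilon\big)\le \mathbb E\,\mu^{\rm Fro}_\omega\big(\{x:{\rm dist}(x,{\rm supp}\,\mu^{\rm Fro}_\omega)>\varepsilon\}\big)+o(1)=o(1).
\]
The first term vanishes because $\mu^{\rm Fro}_\omega$ gives full mass to its own support.

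\textbf{Main obstacle.} Making this step rigorous is the main difficulty, since the particle coordinate for $\mu\ne\mu^\infty$ is not driven by the frozen flow. The route I would try first is a two-time splitting. First run the true system from $-t$ to $-s$, with $s$ fixed; its law at time $-s$ is close to $\mu^\infty$ because $(y,\mu)\in{\bf E}_0$, and its measure component $S_{t-s}\mu$ is within $Ce^{-\lambda(t-s)}$ of $\mu^\infty$. Then, on $[-s,0]$, compare $\phi$ with the frozen flow pathwise by a Gronwall/rough-path continuity estimate in the measure variable (from \cite{gess2025random}). This replaces $\phi$ by $\varphi^{\rm Fro}(s,\theta_{-s}\omega,\cdot)$ applied to an initial point that is independent of $\mathcal F_{-s,0}$ and approximately $\mu^\infty$-distributed. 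The FGS identity then gives
\[
\mathbb E\,\Phi_\varepsilon\big(\varphi^{\rm Fro}(s,\theta_{-s}\omega)\xi,\mu^{\rm Fro}_\omega\big)\approx \mathbb E\int \Phi_\varepsilon\big(\varphi^{\rm Fro}_s(\theta_{-s}\omega)z,\mu^{\rm Fro}_\omega\big)\,\mu^\infty(\d z)\to0 \quad\text{as } s\to\infty,
\]
by the convergence of $\varphi^{\rm Fro}_s(\theta_{-s}\omega)\mu^\infty\to\mu^{\rm Fro}_\omega$ (weak convergence plus the portmanteau theorem on closed sets). Choosing $s$ large first and then $t\gg s$ concludes the attraction. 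The singleton case then follows directly from \cref{def:weaksynch}.
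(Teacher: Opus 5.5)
\textbf{The attraction step has a genuine gap, and it is not where you locate it.} Your compactness and invariance arguments are fine and match Step (iv) of the paper. The overall architecture of the attraction step is also right: split at an intermediate time, use independence of past and future noise, use ${\bf E}_0$ for the law at the intermediate time, and use weak convergence of the pullbacks of $\mu^\infty$.

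The problem is the decoupling. In your two-time splitting, $\xi\coloneqq\phi_{t-s}(\theta_{-t}\omega,y,\mu)$ is indeed independent of $\ca F_{-s,0}$. The target set ${\rm supp}(\mu^{\rm Fro}_\omega)$, however, is not $\ca F_{-s,0}$-measurable. Indeed $\mu^{\rm Fro}_\omega=\varphi^{\rm Fro}(s,\theta_{-s}\omega)\mu^{\rm Fro}_{\theta_{-s}\omega}$, and $\mu^{\rm Fro}_{\theta_{-s}\omega}$ is a functional of the noise on $(-\infty,-s]$, including the increments on $[-t,-s]$ that produce $\xi$.

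Your ``FGS identity'' $\EE\,\Phi_\varepsilon(\varphi^{\rm Fro}(s,\theta_{-s}\omega)\xi,\mu^{\rm Fro}_\omega)\approx\EE\int\Phi_\varepsilon(\varphi^{\rm Fro}_s(\theta_{-s}\omega)z,\mu^{\rm Fro}_\omega)\,\mu^\infty(\d z)$ would need the joint law of $(\xi,\mu^{\rm Fro}_{\theta_{-s}\omega})$ to be close to a product. That fails in general. If the frozen flow synchronises, $\xi$ is close to the random point $a(\theta_{-s}\omega)$, which is exactly what is being proved.

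Nor can you swap ${\rm supp}(\mu^{\rm Fro}_\omega)$ for the $\ca F_{-s,0}$-measurable set ${\rm supp}(\varphi^{\rm Fro}_s(\theta_{-s}\omega)\mu^\infty)$. Supports are not stable under weak convergence: in the Ornstein--Uhlenbeck example the latter is all of $\RR^d$ for every $s$. The same objection already sinks your first heuristic, since matching one-time marginal laws says nothing about proximity to a random set built from the same noise.

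\textbf{The missing idea is the paper's measurable proxy set.}
\begin{enumerate}[(a)]
\item Work on the lifted space with $\varphi^{\rm MV}$ throughout. Let $\bar A_{n,b}(\omega)$ be the union of the $d_2$-balls of radius $\varepsilon/3$ whose mass under $\varphi^{\rm MV}(n,\theta_{-n}\omega)(\mu^\infty\otimes\delta_{\mu^\infty})=\varphi^{\rm Fro}(n,\theta_{-n}\omega)\mu^\infty\otimes\delta_{\mu^\infty}$ is at least $b$.
\item This set is open and $\ca F_{-n,0}$-measurable. Using compactness of the support and weak convergence of the pullbacks, for $b$ small and $n$ large it satisfies, with probability close to one, two properties: it lies in the $\varepsilon$-neighbourhood of ${\rm supp}(\mu^{\rm Fro}_\omega)\times\{\mu^\infty\}$, and it carries pushforward mass at least $1-\gamma/3$.
\item Independence of $\ca F_{-\infty,-n}$ and $\ca F_{-n,0}$ then applies exactly to $\PP\big(\varphi^{\rm MV}(s,\theta_{-(s+n)}\omega,(y,\mu))\in\varphi^{\rm MV}(n,\theta_{-n}\omega)^{-1}\bar A_{n,b}(\omega)\big)$.
\item Fatou's lemma, the portmanteau theorem on the open preimage, and the definition of ${\bf E}_0$ give the lower bound $(1-\gamma/3)^2$.
\end{enumerate}

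This device also makes your pathwise comparison of $\phi$ with the frozen flow on $[-s,0]$ unnecessary. The intermediate state is only approximately $\mu^\infty\otimes\delta_{\mu^\infty}$-distributed in both coordinates, but that is absorbed by weak convergence on $\RR^d\times\ca P_2(\RR^d)$. The frozen dynamics enter only through the exact identity for the pushforward of $\mu^\infty\otimes\delta_{\mu^\infty}$ under $\varphi^{\rm MV}$.
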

\begin{proof}
	One can  check that, for any $t\geq 0$, for $\omega\in \Omega$,
	\begin{align}
		\varphi^{\rm{MV}}(t,\omega, (\cdot,\cdot))(\mu^{\infty}\otimes  \delta_{\mu^{\infty}})&=\varphi^{\rm{MV}}(t,\omega,(\cdot,\mu^{\infty}))\mu^{\infty}\otimes  \delta_{\mu^{\infty}}\\
		&=\varphi^{\rm{Fro}}(t,\omega, \cdot)\mu^{\infty}\otimes  \delta_{\mu^{\infty}}\,.
	\end{align}
Hence, the statistical equilibrium of $\varphi^{\rm{MV}}$ associated with the invariant probability measure $\mu^{\infty}\otimes  \delta_{\mu^{\infty}}$ exists and satisfies
	\begin{align}
\lim_{t_{k}\to\infty}\varphi^{\rm{MV}}(t_k,\theta_{-t_k}\omega, (\cdot,\cdot))(\mu^{\infty}\otimes  \delta_{\mu^{\infty}})=\mu^{\rm{Fro}}_{\omega}\otimes  \delta_{\mu^{\infty}}.\label{muinfinity}
	\end{align}
Since ${\rm{supp}}\big(\mu^{\rm{Fro}}_{\omega}\big)$ is almost surely compact, to prove that ${\rm{supp}}\big(\mu^{\rm{Fro}}_{\omega}\big)\times \{\mu^{\infty}\}$ is a weak point attractor for $\bf{E}_0$, it remains to show that this random set is invariant and  attracts every point in $\bf{E}_0$ in probability.

	\vspace{2mm}
	\noindent {\bf{Step (i):}} Fix $\varepsilon>0$. Let $(y_1, \mu_1), (y_2, \mu_2),\ldots,$ be a countable dense subset of  $\RR^d\times \ca{P}_2(\RR^d)$. Denote by 
	\begin{align}
		B_i\coloneqq \,\,&\Big\{(y, \mu): d_2\big((y, \mu), (y_i, \mu_i)\big) < \varepsilon/3\Big\}\,,\\[1mm]
		I(\omega)\coloneqq \,\,&\min \left\{i: {\rm{supp}}\big(\mu^{\rm{Fro}}_{\omega}\big)\times \{\mu^{\infty}\}\subseteq \cup_{j=1}^i B_j\right\}\,.
	\end{align}
	Since 	${\rm{supp}}\big(\mu^{\rm{Fro}}_{\omega}\big)\times \{\mu^{\infty}\}$ is compact almost surely, $I(\omega)<\infty$ almost surely.  Moreover, let 
	\begin{align}
		\tilde{A}(\omega)\coloneqq\,\,& \cup_{j\in J(\omega)} B_j=\cup_{j\in \big\{j\leq I(\omega): B_j\cap \big({\rm{supp}}(\mu^{\rm{Fro}}_{\omega})\times \{\mu^{\infty}\}\big)\neq \emptyset \big\}} B_j\,,\\[1mm]
			\beta(\omega)\coloneqq\,\,& \min \big\{\big( \mu^{\rm{Fro}}_{\omega}\otimes \delta_{\mu^{\infty}}\big)(B_j), \,j\in J(\omega)\big\}.
	\end{align}
	Then $\tilde{A}(\omega)$ is a  bounded open set and $\beta(\omega)>0$ almost surely.
	
	\vspace{2mm}
	\noindent {\bf{Step (ii):}}  For any $\varepsilon>0$, we define 
	\begin{align}
		 \big({\rm{supp}}\big(\mu^{\rm{Fro}}_{\omega}\big)\times \{\mu^{\infty}\}\big)^{\varepsilon}\coloneqq \,\,&\left\{(y, \mu): d_2\big((y, \mu), {\rm{supp}}\big(\mu^{\rm{Fro}}_{\omega}\big)\times \{\mu^{\infty}\}\big) < \varepsilon\right\}\\[1mm]
		=\,\,&\Big\{(y, \mu):\inf_{(x, \nu)\in {\rm{supp}}\big(\mu^{\rm{Fro}}_{\omega}\big)\times \{\mu^{\infty}\} } d_2\big((y, \mu), (x, \nu)\big)<\varepsilon\Big\}\,.
	\end{align}
 Given $\gamma>0$, we construct  a random set $\bar{A}_{n, b}(\cdot)$ such that 
	\begin{align}
	\PP\Big( \omega: \tilde{A}(\omega)\subseteq\bar{A}_{n, b}(\omega)\subseteq \big({\rm{supp}}\big(\mu^{\rm{Fro}}_{\omega}\big)\times \{\mu^{\infty}\}\big)^{\varepsilon}\Big)\geq 1-\frac{\gamma}{4}\,.\label{eq:three_sets}
	\end{align}
Indeed, since $\lim_{n\to\infty}\varphi^{\rm{MV}}(n, \theta_{-n}\omega)(\mu^{\infty}\otimes \delta_{\mu^{\infty}})=\mu^{\rm{Fro}}_{\omega}\otimes  \delta_{\mu^{\infty}}$ and $\mu^{\rm{Fro}}_{\omega}\otimes  \delta_{\mu^{\infty}}\big(\big({\rm{supp}}\big(\mu^{\rm{Fro}}_{\omega}\big)\times \{\mu^{\infty}\}\big)^{\varepsilon/3}\big)=1$, it follows that, $\PP$-a.s. for any $b>0$, there exists $n_0$, such that for $n\geq n_0$,
\begin{align}\label{eq:mass_on_third_varepsilon}
\varphi^{\rm{MV}}(n, \theta_{-n}\omega)(\mu^{\infty}\otimes \delta_{\mu^{\infty}})\big(\big({\rm{supp}}\big(\mu^{\rm{Fro}}_{\omega}\big)\times \{\mu^{\infty}\}\big)^{\varepsilon/3}\big)\geq 1-\frac{b}{2}\,.
\end{align}
%Choose  $(y, \mu)\in \RR^d\times \ca{P}_2(\RR^d)$ such that the mass of push-forward measure on its ball is greater than $b$ and 
Let 
$$D(n, b, \omega)\coloneqq \Big\{(y, \mu): \Big(\varphi^{\rm{MV}}(n, \theta_{-n}\omega)(\mu^{\infty}\otimes \delta_{\mu^{\infty}})\Big) B\big((y, \mu), \varepsilon/3\big)\geq b\Big\}\,.$$
Define 
$$\bar{A}_{n, b}(\omega)\coloneqq \cup_{(y, \mu)\in D(n, b, \omega)} B\big((y, \mu), \varepsilon/3\big)\,.$$
	
By the construction of $\bar{A}_{n, b}$ and \eqref{eq:mass_on_third_varepsilon}, for   $n\geq n_0$ and  $(y, \mu)\in D(n, b, \omega)$, we have $\big({\rm{supp}}\big(\mu^{\rm{Fro}}_{\omega}\big)\times \{\mu^{\infty}\}\big)^{\varepsilon/3} \cap B\big((y, \mu), \varepsilon/3\big)\neq \emptyset$, $B\big((y, \mu), \varepsilon/3\big)\subseteq \big({\rm{supp}}\big(\mu^{\rm{Fro}}_{\omega}\big)\times \{\mu^{\infty}\}\big)^{\varepsilon}$ and $\bar{A}_{n, b}(\omega)\subseteq \big({\rm{supp}}\big(\mu^{\rm{Fro}}_{\omega}\big)\times \{\mu^{\infty}\}\big)^{\varepsilon}$.
	
Furthermore, by the definition of $\beta(\omega)$, for $\omega\in \Omega$ such that $\beta(\omega)>b$, it holds that 
$\big( \mu^{\rm{Fro}}_{\omega}\otimes \delta_{\mu^{\infty}}\big)(B_j)>b$, $j\in J(\omega)$, and for $n$ sufficiently large,  $\tilde{A}(\omega)\subseteq \bar{A}_{n, b}(\omega)$. Then, 
\begin{align}
	\liminf_{n\to\infty}\PP \big(\omega: \tilde{A}(\omega)\subseteq \bar{A}_{n, b}(\omega)\big)\geq \PP (\omega: \beta(\omega)>b).
\end{align} 
Therefore, for  given $\gamma>0$, we may find $b$ sufficiently small and $n_0$ sufficiently large  such that for all $n\geq n_0$,
\begin{align}
	\PP \Big(\omega: \tilde{A}(\omega)\subseteq \bar{A}_{n, b}(\omega)\subseteq \big({\rm{supp}}\big(\mu^{\rm{Fro}}_{\omega}\big)\times \{\mu^{\infty}\}\big)^{\varepsilon}\Big)\geq 1-\frac{\gamma}{4}\,,
\end{align}
which completes the construction of $\bar{A}_{n, b}$ in \eqref{eq:three_sets}.

Moreover, since ${\rm{supp}}\big(\mu^{\rm{Fro}}_{\omega}\big)\times \{\mu^{\infty}\}\subseteq \tilde{A}(\omega)$,   we have $\mu^{\rm{Fro}}_{\omega}\otimes \delta_{\mu^{\infty}}\big(\tilde{A}(\omega)\big)=1$. Since $\tilde{A}(\omega)$ is open,  the weak convergence of $\lim_{n\to\infty}\varphi^{\rm{MV}}(n, \theta_{-n}\omega)(\mu^{\infty}\otimes \delta_{\mu^{\infty}})=\mu^{\rm{Fro}}_{\omega}\otimes \delta_{\mu^{\infty}}$ and Portmanteau theorem yield that  for $\PP$-almost every $\omega$, there exists $n_1$,
$
	\varphi^{\rm{MV}}(n, \theta_{-n}\omega)(\mu^{\infty}\otimes \delta_{\mu^{\infty}})\big(\tilde{A}(\omega)\big)\geq 1-\frac{\gamma}{3}\,.$ Hence, \eqref{eq:three_sets} implies that there exists $n_1>n_0$ such that 
	\begin{align}
		&\PP\Big(\omega:\varphi^{\rm{MV}}(n, \theta_{-n}\omega)(\mu^{\infty}\otimes \delta_{\mu^{\infty}})\big( \bar{A}_{n, b}(\omega)\big)\geq 1-\frac{\gamma}{3}\Big)\geq 1-\frac{\gamma}{3}\,.\label{eq:square_probability}
		\end{align}

	\noindent {\bf{Step (iii):}}  We prove that for every $(y, \mu)\in {\bf{E}_0}$, ${\rm{supp}}\big(\mu^{\rm{Fro}}_{\omega}\big)\times \{\mu^{\infty}\}$ attracts $(y, \mu)$ in probability. Indeed,
	\begin{align}
		&\liminf_{s\to \infty}\PP \Big(\omega: d_2\big(\varphi^{\rm{MV}}(s+n, \theta_{-(s+n)}\omega, (y, \mu)), {\rm{supp}}\big(\mu^{\rm{Fro}}_{\omega}\big)\times \{\mu^{\infty}\}\big)<\varepsilon \Big)\\[1mm]
		=\,\,& \liminf_{s\to \infty}\PP \Big(\omega:\varphi^{\rm{MV}}(s+n, \theta_{-(s+n)}\omega, (y, \mu))\in \big( {\rm{supp}}\big(\mu^{\rm{Fro}}_{\omega}\big)\times \{\mu^{\infty}\}\big)^\varepsilon \Big)\\[1mm]
		\geq \,\,& \liminf_{s\to \infty}\PP \Big(\omega:\varphi^{\rm{MV}}(s+n, \theta_{-(s+n)}\omega, (y, \mu))\in \bar{A}_{n, b}(\omega)\Big)-\frac{\gamma}{4}\\[1mm]
		= \,\,& \liminf_{s\to \infty}\PP \Big(\omega:\varphi^{\rm{MV}}(s, \theta_{-(s+n)}\omega, (y, \mu))\in \varphi^{\rm{MV}}(n, \theta_{-n}\omega)^{-1}\bar{A}_{n, b}(\omega)\Big)-\frac{\gamma}{4}\,,\label{eq:delt34}
	\end{align}
	where in the second step, we use the fact that 
	\begin{align}
		&\PP \Big(\omega:\varphi^{\rm{MV}}(s+n, \theta_{-(s+n)}\omega, (y, \mu))\in \bar{A}_{n, b}(\omega)\Big)\\[1mm]
		\leq\,\,&\PP \Big(\omega:\varphi^{\rm{MV}}(s+n, \theta_{-(s+n)}\omega, (y, \mu))\in \big({\rm{supp}}\big(\mu^{\rm{Fro}}_{\omega}\big)\times \{\mu^{\infty}\}\big)^{\varepsilon}\Big)\\[1mm]
		&+\PP \Big(\omega:\varphi^{\rm{MV}}(s+n, \theta_{-(s+n)}\omega, (y, \mu))\in \bar{A}_{n, b}(\omega)\setminus \big({\rm{supp}}\big(\mu^{\rm{Fro}}_{\omega}\big)\times \{\mu^{\infty}\}\big)^{\varepsilon}  \Big)\\[1mm]
		\leq \,\,&\PP \Big(\omega:\varphi^{\rm{MV}}(s+n, \theta_{-(s+n)}\omega, (y, \mu))\in \big({\rm{supp}}\big(\mu^{\rm{Fro}}_{\omega}\big)\times \{\mu^{\infty}\}\big)^{\varepsilon} \Big)\\[1mm]
		&+\PP \Big(\omega: \bar{A}_{n, b}(\omega)\setminus \big({\rm{supp}}\big(\mu^{\rm{Fro}}_{\omega}\big)\times \{\mu^{\infty}\}\big)^{\varepsilon} \Big)\\[1mm]
		\leq \,\,&\PP \Big(\omega:\varphi^{\rm{MV}}(s+n, \theta_{-(s+n)}\omega, (y, \mu))\in \big({\rm{supp}}\big(\mu^{\rm{Fro}}_{\omega}\big)\times \{\mu^{\infty}\}\big)^{\varepsilon} \Big)+\frac{\gamma}{4}\,.
	\end{align}
	
	To estimate \eqref{eq:delt34}, we observe that for any $a, b, c, e\in \RR$, $d\in \RR_+$, and $y\in \RR^d$,
	\begin{align}
		\theta_a\ca{F}_{b, c}=\ca{F}_{b-a, c-a}\,,\,\,\,\,\,\,\,\varphi^{\rm{MV}}(d, \theta_e\omega)y\,\,\,\text{is}\,\,\,\ca{F}_{e, d+e}-\ca{B}(\RR^d)\,\,\text{measurable}\,.
	\end{align}
	Then  $\bar{A}_{n, b}(\cdot)$ is $\ca{F}_{-n, 0}$-measurable,  $\varphi^{\rm{MV}}(n, \theta_{-n}\cdot)^{-1}\big( \bar{A}_{n, b}\big)\,\,\,\,\,\text{is}\,\, \ca{F}_{-n, 0}- \ca{B}(\RR^d)-\text{measurable}\,,$ and  $\varphi^{\rm{MV}}(s, \theta_{-(s+n)}\cdot, (y, \mu))$  is $\ca{F}_{-\infty, -n}-\ca{B}(\RR^d)$-measurable. Therefore, by the independence of $\ca{F}_{-\infty, -n}$ and  $\ca{F}_{-n, 0}$, and Fatou's lemma,
	\begin{align}
		&\liminf_{s\to\infty}\PP \Big(\omega:\varphi^{\rm{MV}}(s, \theta_{-(s+n)}\omega, (y, \mu))\in \varphi^{\rm{MV}}(n, \theta_{-n}\omega)^{-1}\bar{A}_{n, b}(\omega)\Big)\\[1mm]
		=\,\,&\liminf_{s\to\infty}\EE_{\omega'}\PP \Big(\omega:\varphi^{\rm{MV}}(s, \theta_{-(s+n)}\omega, (y, \mu))\in \varphi^{\rm{MV}}(n, \theta_{-n}\omega')^{-1}\bar{A}_{n, b}(\omega')\Big)\\[1mm]
	\geq\,\,&\EE_{\omega'}\liminf_{s\to\infty}\PP \Big(\omega:\varphi^{\rm{MV}}(s, \theta_{-(s+n)}\omega, (y, \mu))\in \varphi^{\rm{MV}}(n, \theta_{-n}\omega')^{-1}\bar{A}_{n, b}(\omega')\Big)\\[1mm]
\geq\,\,&\EE_{\omega'}\big(\mu^{\infty}\otimes \delta_{\mu^{\infty}}\big) \Big(\varphi^{\rm{MV}}(n, \theta_{-n}\omega')^{-1}\bar{A}_{n, b}(\omega')\Big)\\[1mm]
		\geq \,\,&\int_{\big\{\omega': (\mu^{\infty}\otimes \delta_{\mu^{\infty}}) (\varphi^{\rm{MV}}(n, \theta_{-n}\omega')^{-1}\bar{A}_{n, b}(\omega'))\geq 1-\gamma/3\big\}}\big(\mu^{\infty}\otimes \delta_{\mu^{\infty}}\big) \big(\varphi^{\rm{MV}}(n, \theta_{-n}\omega')^{-1}\bar{A}_{n, b}(\omega')\big) \d \PP\\[1mm]
		\geq \,\,& (1-\gamma/3)^2\,,
	\end{align}
	where we use  the definition of $\bf{E}_0$ in the third step and \eqref{eq:square_probability} in the last step.
	Plugging this estimate into \eqref{eq:delt34}, we obtain 
	\begin{align}
		\liminf_{s\to \infty}\PP \Big(\omega: d_2\big(\varphi^{\rm{MV}}(s+n, \theta_{-(s+n)}\omega, (y, \mu)), {\rm{supp}}\big(\mu^{\rm{Fro}}_{\omega}\big)\times \{\mu^{\infty}\}\big)<\varepsilon \Big)\geq 1-\gamma\,,
	\end{align}
	which implies that ${\rm{supp}}\big(\mu^{\rm{Fro}}_{\omega}\big)\times \{\mu^{\infty}\}$ attracts every $(y, \mu)\in{\bf{E}_0}$ in probability by the arbitrariness of $\varepsilon$ and $\gamma$.
	
	\noindent {\bf{Step (iv):}} It remains to prove the 
	invariance of  ${\rm{supp}}\big(\mu^{\rm{Fro}}_{\omega}\big)\times \{\mu^{\infty}\}$  under $\varphi^{\rm{MV}}$. 
Since $\varphi^{\rm{Fro}}(t,\omega,\cdot)$ is a homeomorphism, we have
\begin{equation}
	{\rm{supp}}\Big(
	\varphi^{\rm{Fro}}(t,\omega,\cdot)\mu_{\omega}^{\rm{Fro}}\Big)=	\varphi^{\rm{Fro}}\big(
	t,\omega,{\rm{supp}}(\mu_{\omega}^{\rm{Fro}})
	\big)\,.
\end{equation}
On the other hand, by the invariance of the statistical equilibrium,
$\varphi^{\rm{Fro}}(t,\omega,\cdot)\mu_{\omega}^{\rm{Fro}}=\mu_{\theta_t\omega}^{\rm{Fro}}\,.
$
Therefore,
\begin{align}
	\varphi^{\rm{Fro}}\big(
	t,\omega,{\rm{supp}}(\mu_{\omega}^{\rm{Fro}})
	\big)={\rm{supp}}\big(\mu_{\theta_t\omega}^{\rm{Fro}}\big)\,,
\end{align}
which proves the desired invariance.

\end{proof}

\begin{proposition}[Characterisation of $\bf{E}_0$]\label{prop:characterisation_of_E_0}
	Assume the following three conditions.
	\begin{enumerate}[(i)]
		\item The McKean--Vlasov PDE \eqref{eq:MVPDE3} satisfies \cref{assum:existence_of_mu_infty};
		\item The frozen SDE \eqref{eq:inftySDE} is strongly mixing with respect to $\mu^{\infty}$ and there exists a constant $C>0$  such that for  any  $\mu, \nu\in\ca{P}(\RR^d)$,   the solution to  \eqref{eq:inftySDE} satisfies 
		\begin{equation}
			\sup_{t\geq 0}d_w\big(Q_t^*\mu, Q_t^*\nu\big)\leq C d_w\big(\mu, \nu)\,.\label{es:exponential_ergodicity_frozen}
		\end{equation}
		\item For any $x, y\in \RR^d$, $\mu, \nu\in \ca{P}_2(\RR^d)$, 
		\begin{align}
        %|b(x, \mu)-b(y, \mu)|+|\sigma(x, \mu)-\sigma(y, \mu)|\leq\,\,& F(M_2(\mu), 0)|x-y|\,,\\
        %|b(0, \mu)|+|\sigma(0, \mu)|\leq\,\,&F(M_2(\mu), 0)\,,\\
%2\langle x, b(x,\mu_t)\rangle
%+\Tr(\sigma\sigma^T(x,\mu_t))
%\le -c|x|^2+C,\qquad t\ge T_0\,,\\
			|b(y, \mu)-b(y, \nu)|+|\sigma\sigma^T(y, \mu)-\sigma\sigma^T(y, \nu)|\leq\,\,& F(M_2(\mu), M_2(\nu))(1+|y|)\ca{W}_2(\mu, \nu)\,,\label{eq:E_0_local_Lipschitz}
		\end{align}
		where $F: \RR_+^2\to\RR_+$ is non-decreasing in each variable. Moreover, 
		for any $(y, \mu)\in \RR^d\times \ca{P}_2(\RR^d)$, the solution to the McKean--Vlasov system \eqref{eq: MVRDE3} satisfies 
	\begin{equation}\label{es:uniform_in_time_estimate}
	    \sup_{t\geq 0}M_1\big(\Law(Y_t)\big)<\infty\,.
	\end{equation}

	\end{enumerate}
	Then 
	\begin{equation}
		{\bf{E}_0}=\RR^d\times \ca{P}_2(\RR^d)\,.
	\end{equation}
	%$({\bf{P}}_t)_{t\geq 0}$ is strongly mixing with respect to $\mu^{\infty}\otimes \delta_{\mu_{\infty}}.$
	
\end{proposition}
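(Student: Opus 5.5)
Since the measure component $\mu_t=S_t\mu$ is deterministic and, by (i), converges to $\mu^\infty$ in $\ca W_2$, the product structure of $\mu^\infty\otimes\delta_{\mu^\infty}$ reduces the claim to a statement about the particle marginal. Concretely, I will show that for every $(y,\mu)$,
\begin{equation}
	d_w\big(\Law(Y_t^{y,\mu}),\mu^\infty\big)\xrightarrow[t\to\infty]{}0 .
\end{equation}
Given this, the joint law of $(Y_t,\mu_t)$ is $\Law(Y_t)\otimes\delta_{\mu_t}$. For $f\in C_b(\RR^d\times\ca P_2(\RR^d),d_2)$, tightness of $\Law(Y_t)$ (a consequence of \eqref{es:uniform_in_time_estimate}) together with $\mu_t\to\mu^\infty$ lets me replace $f(\cdot,\mu_t)$ by $f(\cdot,\mu^\infty)$ up to an error that is uniformly small on compacts. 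This yields ${\bf P}_tf(y,\mu)\to\int f(x,\mu^\infty)\,\d\mu^\infty(x)$, that is, $(y,\mu)\in{\bf E}_0$.

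To prove the marginal convergence I will use an asymptotic-autonomy (freezing) argument. Fix $s>0$ and write $\nu_s:=\Law(Y_s)$. The process $Y_{s+\cdot}$ solves the nonautonomous SDE with coefficients $b(\cdot,\mu_{s+r})$ and $\sigma(\cdot,\mu_{s+r})$, started from $\nu_s$. Let $\tilde Y^{(s)}_r$ solve the frozen equation \eqref{eq:inftySDE} from the same initial law, so that $\Law(\tilde Y^{(s)}_{r})=Q_r^*\nu_s$. By the triangle inequality,
\begin{equation}
	d_w\big(\Law(Y_{s+r}),\mu^\infty\big)\le d_w\big(\Law(Y_{s+r}),Q_r^*\nu_s\big)+d_w\big(Q_r^*\nu_s,\mu^\infty\big).
\end{equation}
I handle the two terms as follows.
\begin{itemize}
	\item \textbf{Mixing term.} The second term tends to $0$ as $r\to\infty$ for each fixed $s$, since strong mixing plus dominated convergence give $Q_r^*\nu\to\mu^\infty$ weakly for every initial law $\nu$.
	\item \textbf{Comparison term.} The first term must be small uniformly in $r$ once $s$ is large. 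For this I use a Duhamel (perturbation) expansion along the frozen semigroup:
	\begin{equation}
		\langle g,\Law(Y_{s+r})\rangle-\langle g,Q_r^*\nu_s\rangle=\int_0^r\EE\big[(\ca L_{s+u}-\ca L^\infty)Q_{r-u}g\,(Y_{s+u})\big]\,\d u,
	\end{equation}
	where $\ca L_{t}$ is the generator with measure frozen at $\mu_t$ and $\ca L^\infty$ the frozen generator. By \eqref{eq:E_0_local_Lipschitz}, together with the bounded second moments of $S_t\mu$, the operator $\ca L_{s+u}-\ca L^\infty$ acts on $Q_{r-u}g$ with a coefficient bounded by $F\,(1+|y|)\,\ca W_2(\mu_{s+u},\mu^\infty)\lesssim(1+|y|)e^{-\lambda(s+u)}$. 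The uniform first-moment bound \eqref{es:uniform_in_time_estimate} controls $\EE(1+|Y_{s+u}|)$. The remaining factor is the derivatives $\|\nabla Q_{r-u}g\|_\infty$ and $\|\nabla^2Q_{r-u}g\|_\infty$, which I need to be bounded uniformly in $r-u$. The intended source is \eqref{es:exponential_ergodicity_frozen}, read as a uniform-in-time Lipschitz bound for $Q_t$ acting on test functions defining $d_w$. Granting this, the integral is at most $C\int_0^\infty e^{-\lambda(s+u)}\,\d u=Ce^{-\lambda s}/\lambda$.
\end{itemize}
Choosing first $s$ large and then $r$ large gives the marginal convergence.

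The main obstacle is this last step. Condition \eqref{es:exponential_ergodicity_frozen} only gives a uniform $d_w$-contraction, that is, control of Lipschitz norms, whereas the Duhamel expansion involves the second-order term $(\sigma\sigma^T(\cdot,\mu_t)-\sigma\sigma^T(\cdot,\mu^\infty)):\nabla^2Q_{r-u}g$, which requires second derivatives. To avoid needing uniform $C^2$ bounds, my first attempt would be to work at the level of measures instead. I would compare $\Law(Y_{s+r})$ with $Q_r^*\nu_s$ by a telescoping sum over unit time steps, $\sum_k Q^*_{r-k-1}\big(P^{(s+k)*}_{1}\eta_k-Q_1^*\eta_k\big)$, where $P^{(t)}_1$ is the one-step nonautonomous transition and $\eta_k=\Law(Y_{s+k})$. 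Each summand is then bounded by $C\,d_w\big(P^{(s+k)*}_1\eta_k,Q_1^*\eta_k\big)$ using \eqref{es:exponential_ergodicity_frozen}. The one-step discrepancy I would estimate by a synchronous coupling over a single unit time interval. There, standard Gr\"onwall/BDG estimates with \eqref{eq:E_0_local_Lipschitz} and the moment bound give a bound of order $e^{-\lambda(s+k)}$, possibly with a square root coming from the $\sigma\sigma^T$ rather than $\sigma$ continuity. In either case the resulting series is summable and its sum tends to $0$ as $s\to\infty$, which closes the argument.
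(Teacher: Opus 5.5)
Your final argument, the unit-step telescoping rather than the Duhamel expansion you discard, is correct. It takes a genuinely different route from the paper's.

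\textbf{The paper's route.} The paper never compares the nonautonomous and frozen evolutions over an unbounded horizon. It uses the uniform first-moment bound to make $K=\overline{\{\Law(Y_t):t\ge0\}}^{d_w}$ compact and extracts a subsequential limit $\tilde\mu$. It proves asymptotic autonomy only on a fixed window $[t_n-T,t_n]$, by duality with the backward Kolmogorov equation, which gives $\tilde\mu=Q_T^*\rho_T$ for some $\rho_T\in K$. It then uses \eqref{es:exponential_ergodicity_frozen} as an equicontinuity property, upgrading pointwise mixing to mixing uniform over $K$, and finally sends $T\to\infty$.

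\textbf{Your route.} You use \eqref{es:exponential_ergodicity_frozen} instead as a stability estimate that stops one-step errors from being amplified in the telescoping sum. This gives a comparison uniform in the horizon $r$. Compactness of $K$ and uniform-on-compacts mixing then become unnecessary: pointwise mixing is applied only to the single law $\nu_s$.

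\textbf{Trade-offs.} The price is quantitative input. Summability of the one-step discrepancies uses the exponential rate in \cref{assum:existence_of_mu_infty}, whereas the paper only needs $\mu_t\to\mu^\infty$. The gain is that you never need second-derivative bounds on the Kolmogorov solution $u$, which the paper's duality step requires on $[0,T]$. Your estimate is also explicit in $s$.

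Three points in the one-step coupling need more care than ``standard Gr\"onwall/BDG''.
\begin{enumerate}
\item Only $\sup_t M_1(\Law(Y_t))$ is assumed, so an $L^2$ estimate with random initial datum $\eta_k$ does not close. Do it conditionally on $X_0=x$: finite-time second moments from a deterministic start give $\EE_x|X_1-Z_1|\lesssim(1+|x|)(\cdots)$. Integrate against $\eta_k$ only afterwards.
\item \eqref{eq:E_0_local_Lipschitz} controls $\sigma\sigma^T$, not $\sigma$. The bound $|\sigma(y,\mu)-\sigma(y,\nu)|^2\lesssim|\sigma\sigma^T(y,\mu)-\sigma\sigma^T(y,\nu)|$ holds for the symmetric square root (Powers--St{\o}rmer) but not for a general factorisation. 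For example, $\sigma(y,\mu)=\sigma_0(y)O(\mu)$ with $O(\mu)$ orthogonal and discontinuous in $\mu$ satisfies (iii), yet the synchronous coupling gives no bound. You therefore need one of two fixes. Either assume measure-continuity of $\sigma$ itself; this holds in the EKS application, where $\sigma=\sqrt{2\mathrm{Cov}(\mu)}$. Or use that the one-step discrepancy depends only on laws, and re-realise the nonautonomous equation with a factorisation adapted to $\sigma(\cdot,\mu^\infty)$. The paper's generator-level argument only sees $\sigma\sigma^T$, which is why (iii) is phrased that way.
\item The Gr\"onwall step tacitly needs spatial Lipschitz continuity of the frozen coefficients. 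This is a standing well-posedness assumption, not one of (i)--(iii).
\end{enumerate}
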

\begin{proof}
	We first show that  $\mu^{\infty}\otimes \delta_{\mu^{\infty}}$ is the unique invariant probability measure of $({\bf{P}}_t)_{t\geq 0}$, and then conclude by a  uniform-on-compacts strong mixing argument.
	
	Suppose that $\lambda\in \ca{P}\big(\RR^d\times \ca{P}_2(\RR^d)\big)$ is an invariant probability measure of $({\bf{P}}_t)_{t\geq 0}$, i.e. if $\Law(Y_0, \mu_0)=\lambda$
, then $\Law(Y_t, \mu_t)=\lambda$ for any $t\geq 0$. Then, taking marginals, we have
	\begin{align}
		\Law(Y_t)=\pi_1 \lambda\,,\,\,\,\,\,\,\,\,\,\Law(\mu_t)=\pi_2\lambda\,,\,\,\,\,\,\,\,\,\,\,t\geq 0\,.
	\end{align}
Using the exponential ergodicity in \cref{assum:existence_of_mu_infty}, we know that $\mu^{\infty}$ is the unique stationary solution to \eqref{eq:MVPDE3} and thus $\pi_2\lambda=\delta_{\mu^{\infty}}$.  Then the dynamics of the coupled McKean--Vlasov system \eqref{eq: MVRDE3}-\eqref{eq:MVPDE3} with initial distribution $\lambda$ coincides with the system $(Y_t^{\infty}, \mu^{\infty})$  with initial distribution $(\pi_1\lambda, \mu^{\infty})$. Moreover, by  condition (ii), it holds that $\mu^{\infty}$ is also the unique invariant probability measure of  \eqref{eq:inftySDE} and thus $\pi_1\lambda=\mu^{\infty}$. Consequently, every invariant probability measure of $({\bf{P}}_t)_{t\geq 0}$ takes the form of $\mu^{\infty}\otimes \delta_{\mu^{\infty}}$.

	By \eqref{es:uniform_in_time_estimate} and (i), for every $(y,\mu)\in\RR^d\times\ca P_2(\RR^d)$, the family $\big(\Law(Y_t,\mu_t)\big)_{t\geq0}$ is tight.  Prokhorov's theorem implies that every sequence $t_n\to\infty$ admits a  subsequence $(t_{n})_{n\geq1}$, which we do not relabel  for notational convenience,   such  that 
	\begin{align}
		\Law(Y_{t_{n}}, \mu_{t_{n}})\xrightarrow[n\to\infty]{w} \tilde{\lambda}\,,
	\end{align}
	for some $\tilde{\lambda}\in \ca{P}(\RR^d\times \ca{P}_2(\RR^d))$. We will now show that $\tilde{\lambda}=\mu^{\infty}\otimes \delta_{\mu^{\infty}}\,.$ In fact,  due to \cref{assum:existence_of_mu_infty}, we know that $\pi_2\tilde{\lambda}=\delta_{\mu^{\infty}}$. Since any probability measure on
	$\RR^d\times\ca{P}_2(\RR^d)$ whose second marginal is a Dirac measure is necessarily a product measure, there exists $\tilde{\mu}\in\ca{P}(\RR^d)$ such that
	\begin{align}
		\tilde{\lambda}=\tilde{\mu}\otimes\delta_{\mu^{\infty}}.
	\end{align}
	We are left to show that
	$\tilde{\mu}=\mu^{\infty}$. 
	Let $\nu_t\coloneqq\Law(Y_t)$. Recall that $Y_0=y$ and that the measure argument in the equation for $Y_t$ in \eqref{eq: MVRDE3} is given by the deterministic curve of probability measures $(\mu_t)_{t\geq0}$ defined by \eqref{eq:MVPDE3}.
	Then, the family $(\nu_t)_{t\geq0}$ satisfies the linear time-inhomogeneous
	Fokker--Planck equation
	\begin{equation}
		\partial_t\nu_t
		=
		-\nabla\cdot\big(b(\cdot,\mu_t)\nu_t\big)
		+
		\frac{1}{2}D^2:
		\big(
		\sigma(\cdot,\mu_t)\sigma(\cdot,\mu_t)^T\nu_t
		\big)\,,\,\,\,\,\,\,\,\nu_0=\delta_y.
	\end{equation}
By condition (iii), we know that  $\sup_{t\in [0, T]}M_2(\nu_t)<\infty$.
    
Let $K\coloneqq \overline{ \{\nu_t: t\geq 0\}}^{d_w}$. 	Fix $T>0$ and for $n$ large enough, set
	$
	s_{n}\coloneqq t_{n}-T
	$.	 Since $K$ is compact, there	exists a subsequence  $(s_{n})_{n\geq1}$, which we again do not relabel for notational simplicity, and $\rho_T\in K$ such that
	\begin{equation}
		\nu_{s_{n}}
		\xrightarrow[n\to\infty]{w}
		\rho_T.
	\end{equation}
	Let $(q_t^{\rho_T})_{t\in[0,T]}$ be the solution to the frozen
	Fokker--Planck equation
	\begin{equation}\label{eq:q_rho_T}
		\partial_tq_t^{\rho_T}=
		-\nabla\cdot\big(
		b(\cdot,\mu^{\infty})q_t^{\rho_T}\big)
		+\frac{1}{2}D^2:
		\big(\sigma(\cdot,\mu^{\infty})
		\sigma(\cdot,\mu^{\infty})^T
		q_t^{\rho_T}\big),
		\qquad
		q_0^{\rho_T}=\rho_T.
	\end{equation}
and let $u$ solve the backward Kolmogorov equation
	\begin{align}
		\partial_tu(t,x)
		={}&
		-\langle b(x,\mu^{\infty}),\nabla u(t,x)\rangle
		-\frac{1}{2}
		D^2u(t,x):
		\big(
		\sigma(x,\mu^{\infty})
		\sigma(x,\mu^{\infty})^T
		\big),
		\\
		u(T,\cdot)
		={}&f,
		\qquad
		f\in C_b^2(\RR^d).
	\end{align}
	Then
	\begin{align}
		&\int_{\RR^d}
		u(T,x)\,\d\big(\nu_{s_{n}+T}-q_T^{\rho_T}
		\big)\\
		={}&\int_{\RR^d}	u(0,x)\,
		\d\big(\nu_{s_{n}}-\rho_T\big)+\int_0^T\frac{\d}{\d t}
		\int_{\RR^d}u(t,x)\,\d\big(\nu_{s_{n}+t}-q_t^{\rho_T}
		\big)
		\,\d t.
		\label{eq:fundamental_calculus}
	\end{align}
	Using the equations satisfied by $\nu_{s_{n}+t}$,
	$q_t^{\rho_T}$, and $u$, we obtain \footnote{The proof is analogous to the uniqueness part of the proof of \cite[Theorem 3.4]{gess2025random}.}
	\begin{align}
		&\frac{\d}{\d t}
		\int_{\RR^d}u(t,x)\,\d\big(\nu_{s_{n}+t}-q_t^{\rho_T}\big)\\
		=\,&
		\int_{\RR^d}
		\langle\nabla u(t,x), b(x,\mu_{s_{n}+t})-b(x,\mu^{\infty})\rangle
		\d\nu_{s_{n}+t}\\
		&+\frac{1}{2}\int_{\RR^d}D^2u(t,x):
		\big(\sigma\sigma^T(x,\mu_{s_{n}+t})-\sigma\sigma^T(x,\mu^{\infty})\big)
		\d\nu_{s_{n}+t}.
		\label{eq:derivative_asymptotic_autonomy}
	\end{align}
	By the convergence of $\mu_t$ to $\mu^\infty$ in \cref{assum:existence_of_mu_infty}, together with \eqref{eq:E_0_local_Lipschitz} and the bound $\sup_{t\in[0,T]}M_2(\nu_t)<\infty$, the right-hand side of \eqref{eq:derivative_asymptotic_autonomy} converges to $0$ uniformly for $t\in[0,T]$. Hence,
\begin{equation}
   \lim_{n\to\infty}\int_{\RR^d}f(x),\d\big(
\nu_{s_n+T}-q_T^{\rho_T}\big)=0. 
\end{equation}
Therefore, we obtain the following asymptotic autonomy:
\begin{equation}
\nu_{s_n+T}\xrightarrow[n\to\infty]{w}q_T^{\rho_T}.
\end{equation}

Since $\nu_{t_n}\xrightarrow[n\to\infty]{w}\tilde{\mu}$, by the uniqueness of the weak limit,  we have $q_T^{\rho_T}=Q_T^*{\rho_T}=\tilde{\mu}$.
	Using \eqref{es:exponential_ergodicity_frozen}, we know that for any $\varepsilon>0$, there exists $\delta$, such that for $d_w(\mu, \nu)\leq \delta$, $d_w(Q_t^*\mu, Q_t^*\nu)\leq \varepsilon$.  Since $K$ is a compact set in $(\ca{P}(\RR^{d}), d_w)$, it is totally bounded and we can find finitely many $\lambda_1, \ldots, \lambda_N\in \ca{P}(\RR^{d})$ such that 
	\begin{equation}
		K\subset \cup_{i=1}^N B_{d_w}(\lambda_i, \delta)\,.
	\end{equation}
	On the other hand, for each fixed $\lambda_i$, 
	\begin{equation}
		d_w\big(Q_T^* \lambda_i, \mu^{\infty}\big)\xrightarrow[T\to\infty]{}0\,,
	\end{equation}
	and there exists $T_0$ such that for all $T\geq T_0$, 
	\begin{equation}
		\max_{1\leq i\leq N} d_w\big(Q_T^* \lambda_i, \mu^{\infty}\big)\leq \varepsilon\,.
	\end{equation}
	For arbitrary $\lambda\in K$, choose $i_0$ such that $d_w(\lambda, \lambda_{i_0})\leq \delta$. Then, for all $T\geq T_0$, 
	\begin{align}
		d_w\big(Q_T^*\lambda, \mu^{\infty}\big)\leq\,\,& d_w\big(Q_T^* \lambda, Q_T^* \lambda_{i_0}\big)+d_w\big(Q_T^* \lambda_{i_0}, \mu^{\infty}\big)\\[1mm]
		\leq\,\,& \varepsilon+	\max_{1\leq i\leq N} d_w\big(Q_T^*\lambda_i, \mu^{\infty}\big)\\[1mm]
		\leq\,\,& 2\varepsilon\,,
	\end{align}
	and  
	\begin{equation}
		d_w(\tilde{\mu},\mu^{\infty})=
		d_w\big(Q_T^* \rho_T, Q_T^*\mu^{\infty} 
		\big)\leq \sup_{\lambda\in K}d_w\big(Q_T^*\lambda, \mu^{\infty}\big)\leq 2\varepsilon\,.
	\end{equation}
	By the arbitrariness of $
	\varepsilon$, $\tilde{\mu}=\mu^{\infty}$. Thus, every subsequential weak limit of $\Law(Y_t, \mu_t)$ is equal to $\mu^{\infty}\otimes \delta_{\mu^{\infty}}$ and 
	\begin{equation}
		{\bf E}_0=\RR^d\times\ca{P}_2(\RR^d).
	\end{equation}
\end{proof}

\begin{remark}\label{rem:global_dissipativity_E_0}
	Assume that $b$ and $\sigma$ are continuous on $\RR^d\times \ca{P}_2(\RR^d)$ and there exist constants $K>0$ and  $\lambda> \kappa\geq 0$ such that for any $(x, \mu), (y, \nu)\in \RR^d\times \ca{P}_2(\RR^d)$, 
	\begin{align}
		&|b(x, \mu)|+|\sigma (x, \mu)|\leq\,\, K(1+|x|+M_2(\mu)),\\[1mm]
		&2\langle b(x, \mu)-b(y, \nu), x-y\rangle+|\sigma(x, \mu)-\sigma(y, \nu)|^2\leq\, \kappa \ca{W}_2(\mu, \nu)^2-\lambda |x-y|^2\,\,.
	\end{align}
	Then ${\bf{E_0}}=\RR^d\times \ca{P}_2(\RR^d)$. Indeed, it follows from \cite[Theorem 6.1]{RenRocknerWang.2022.JDE1} that for all $ y\in\RR^d, \mu\in \ca{P}_2(\RR^d)$,
	\begin{align}
		&\ca{W}_2\big({\rm{Law}}(Y_t), \mu^{\infty})^2+\ca{W}_2\big(\mu_t, \mu^{\infty}) ^2\\
        \leq\,\,& \ca{W}_2 (\mu, \mu^{\infty})^2\big( 2{\rm{e}}^{-(\lambda-\kappa) t}-{\rm{e}}^{-\lambda t} \big)+\ca{W}_2(\delta_y, \mu^{\infty})^2{\rm{e}}^{-\lambda t} \,.\label{eq:exponential_ergodicity_P_t}
	\end{align}
	Consequently, 
	$$\Law(Y_t, \mu_t)=\Law(Y_t)\otimes \delta_{\mu_t}\xrightarrow[t\to\infty]{\ca{W}_2}\mu^{\infty}\otimes \delta_{\mu^{\infty}}\,.$$
	Then ${\bf{E_0}}=\RR^d\times \ca{P}_2(\RR^d)$,  since convergence in $\ca{W}_2$ implies weak convergence.
\end{remark}

\subsection{Weak synchronisation and generalised strong mixing for two-point McKean--Vlasov SDEs}
\begin{definition}[Generalised strong mixing]\label{matierial_def:strong_mixing}
	Let $\tilde{\ca{P}}(E)\subseteq \ca{P}(E)$. We say that a stochastic process $(X_t)_{t\geq 0}$ is well-posed in $\tilde{\ca{P}}(E)$, whenever $\Law(X_t)\in \tilde{\ca{P}}(E), t\geq 0,$ provided that $\Law(X_0)\in \tilde{\ca{P}}(E)$. Building on it, we say that it is \emph{generalised strongly mixing} in $\tilde{\ca{P}}(E)$ with respect to an invariant probability measure $\mu^{\infty}\in \tilde{\ca{P}}(E)$ if, for every random variable $X_0$ with $\Law(X_0)\in \tilde{\ca{P}}(E)$ and every bounded continuous function $f$, 
	\begin{equation}
		\int_{E}f (x)\, \d\Law(X_t)\to \int_{E} f(x)\,\d \mu^{\infty}(x)\,,\,\,\,\,\,\,\,\text{as}\,\,t\to\infty\,.
	\end{equation}
\end{definition}

\begin{remark}
	For a measure-independent SDE that is well-posed in
	$\widetilde{\ca P}(\RR^d)$, with
	$\delta_x\in\widetilde{\ca P}(\RR^d)$ for every $x\in\RR^d$,
	\cref{matierial_def:strong_mixing} is equivalent to convergence
	from every deterministic initial condition, since the associated
	Markov semigroup is linear in the initial distribution. In contrast,
	the law of a McKean--Vlasov SDE evolves nonlinearly and, in general,
	\begin{align}
		\Law(X_t^\mu)
		\neq
		\int_{\RR^d}\Law(X_t^{\delta_x})\,\d\mu( x).
	\end{align}
	Hence, convergence from every deterministic initial condition does
	not in general imply generalised strong mixing, which motivates the
	latter notion for McKean--Vlasov SDEs.
\end{remark}

We now investigate the relationship between weak synchronisation of coupled McKean–Vlasov SDEs and generalised strong mixing of two-point McKean–Vlasov SDEs.

Consider  the two-point 	McKean--Vlasov equation
\begin{subequations}
	\begin{align}
		\d Y_t^1=\,\,&b(Y_t^1,\Law(Y_t^1))\,\d t
		+
		\sigma(Y_t^1,\Law(Y_t^1))\,\d W_t,
		\label{eq:uniqueness_two_point_MVSDE_1}\\
		\d Y_t^2=\,\,&b(Y_t^2,\Law(Y_t^2))\,\d t
		+
		\sigma(Y_t^2,\Law(Y_t^2))\,\d W_t
		\label{eq:uniqueness_two_point_MVSDE_2}\,.
	\end{align}	
\end{subequations}

\begin{theorem}[Weak synchronisation and generalised strong mixing for McKean--Vlasov SDEs]\label{thm:weak_synchro_generalised_strong_mixing}
	Consider the coupled
	McKean--Vlasov system
	\begin{subequations}
		\begin{align}
			\d Y_t=\,\,&b(Y_t,\mu_t)\,\d t+\sigma(Y_t,\mu_t)\,\d W_t,\qquad Y_0=y\in \RR^d,
			\label{eq:prove_uniqueness_coupled_MVSDE}\\
			\partial_t\mu_t=\,\,&-\nabla\cdot\bigl(b(\cdot,\mu_t)\mu_t\bigr)
			+
			\frac{1}{2}
			D^2:\bigl(\sigma\sigma^T(\cdot,\mu_t)\mu_t\bigr),
			\qquad
			\mu_0=\mu\in\ca{P}_2(\RR^d).\label{eq:prove_uniqueness_coupled_MVSDE1}
		\end{align}
	\end{subequations}
Assume the following three conditions.
	\begin{enumerate}[(i)]
		\item The McKean--Vlasov PDE \eqref{eq:MVPDE3} satisfies \cref{assum:existence_of_mu_infty};
		\item The frozen SDE \eqref{eq:inftySDE} is strongly mixing with respect to $\mu^{\infty}$ and there exists a constant $C>0$  such that for  any  $\mu, \nu\in\ca{P}(\RR^d)$,   the solution to  \eqref{eq:inftySDE} satisfies 
		\begin{equation}
			\sup_{t\geq 0}d_w\big(Q_t^*\mu, Q_t^*\nu\big)\leq C d_w\big(\mu, \nu)\,.
		\end{equation}
		\item For any $x, y\in \RR^d$, $\mu, \nu\in \ca{P}_2(\RR^d)$, 
		\begin{align}
        %|b(x, \mu)-b(y, \mu)|+|\sigma(x, \mu)-\sigma(y, \mu)|\leq\,\,& F(M_2(\mu), 0)|x-y|\,,\\
        %|b(0, \mu)|+|\sigma(0, \mu)|\leq\,\,&F(M_2(\mu), 0)\,,\\
%2\langle x, b(x,\mu_t)\rangle
%+\Tr(\sigma\sigma^T(x,\mu_t))
%\le -c|x|^2+C,\qquad t\ge T_0\,,\\
			|b(y, \mu)-b(y, \nu)|+|\sigma\sigma^T(y, \mu)-\sigma\sigma^T(y, \nu)|\leq\,\,& F(M_2(\mu), M_2(\nu))(1+|y|)\ca{W}_2(\mu, \nu)\,,
		\end{align}
		where $F: \RR_+^2\to\RR_+$ is non-decreasing in each variable. Moreover, 
		for any $(y, \mu)\in \RR^d\times \ca{P}_2(\RR^d)$, the solution to the McKean--Vlasov system \eqref{eq: MVRDE3} satisfies 
	\begin{equation}
	    \sup_{t\geq 0}M_1\big(\Law(Y_t)\big)<\infty\,.
	\end{equation}  
    %\begin{equation}\label{eq:joint_tightness}
			%\big(\Law(Y_t, \mu_t)\big)_{t\geq 0}\,\,\,\,\,\,\,\,\text{is tight on}\,\,\RR^d\times\ca{P}_2(\RR^d)\,.
		%\end{equation}
	\end{enumerate}
If weak synchronisation occurs for \eqref{eq:prove_uniqueness_coupled_MVSDE}-\eqref{eq:prove_uniqueness_coupled_MVSDE1}, then the two-point McKean--Vlasov system \eqref{eq:uniqueness_two_point_MVSDE_1}-\eqref{eq:uniqueness_two_point_MVSDE_2}
	is generalised strongly mixing with respect to
	$\mu^{\infty}_{\Delta} 
	$ in $\ca{P}_2(\RR^d\times\RR^d)$. 
\end{theorem}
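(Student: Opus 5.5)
Fix a random vector $(Y_0^1,Y_0^2)$ with $\Law(Y_0^1,Y_0^2)=\pi_0\in\ca P_2(\RR^d\times\RR^d)$, independent of $\ca F_{0,\infty}$, and let $\mu^i\coloneqq\pi_0\circ(x_i)^{-1}$ be its marginals. The plan is to represent each component of the two-point McKean--Vlasov system through the lifted RDS $\varphi^{\rm MV}$. By uniqueness for \eqref{eq:MVPDE3}, $\Law(Y^i_t)=S_t\mu^i$, so $Y^i_t=\phi_t(\omega,Y^i_0,\mu^i)$. Each $Y^i$ is therefore the particle coordinate of the coupled system \eqref{eq:prove_uniqueness_coupled_MVSDE}--\eqref{eq:prove_uniqueness_coupled_MVSDE1}, started at a random point but with a deterministic measure component.

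The argument then proceeds in three steps.

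\emph{Marginal convergence and tightness.} \cref{prop:characterisation_of_E_0} gives ${\bf E}_0=\RR^d\times\ca P_2(\RR^d)$. Hence $\Law(\phi_t(\cdot,y,\mu^i))\to\mu^\infty$ weakly for every fixed $y$. Conditioning on $Y^i_0$ (which is independent of the noise) and using dominated convergence, $\Law(Y^i_t)=S_t\mu^i\to\mu^\infty$; this also follows directly from \cref{assum:existence_of_mu_infty}. In particular both marginal families are tight, so the joint family $\bigl(\Law(Y^1_t,Y^2_t)\bigr)_{t\ge0}$ is tight.

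\emph{Closeness to the diagonal.} We show $|Y^1_t-Y^2_t|\to0$ in probability. Weak synchronisation provides a singleton weak point attractor $\{a(\omega)\}$, which we take to be of the form $\{(a(\omega),\mu^\infty)\}$, consistent with \cref{thm:weak_synchronisation_MVSDE}. For deterministic $(y_1,\mu^1),(y_2,\mu^2)$,
\[
|\phi_t(\theta_{-t}\omega,y_1,\mu^1)-a(\omega)|+|\phi_t(\theta_{-t}\omega,y_2,\mu^2)-a(\omega)|\to0
\]
in probability. Since $\theta_t$ preserves $\PP$, the distance $|\phi_t(\omega,y_1,\mu^1)-\phi_t(\omega,y_2,\mu^2)|$ has the same law as the pulled-back version, so it also tends to $0$ in probability. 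One point needs care: the attractor property is usually stated for a single initial point. Here we need the pair $(y_1,\mu^1),(y_2,\mu^2)$ with possibly different measure components, and this is what weak synchronisation on the whole lifted space gives. We then integrate over $\pi_0$: by independence, $\PP(|Y^1_t-Y^2_t|>\eta)=\int g_t(y_1,y_2)\,\pi_0(\d y_1,\d y_2)$, where $g_t\to0$ pointwise and $g_t\le1$, so dominated convergence gives the claim.

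\emph{Identification of the limit.} Let $\nu$ be any subsequential weak limit of $\Law(Y^1_t,Y^2_t)$. From the second step, $\nu(\Delta)=1$. From the first step, the first marginal of $\nu$ is $\mu^\infty$. Hence $\nu=\mu^\infty_\Delta$, exactly as in the proof of \cref{cor:two-point-unique-invariant}, and the whole family converges to $\mu^\infty_\Delta$. Testing against bounded continuous $f$ gives generalised strong mixing in $\ca P_2(\RR^d\times\RR^d)$; well-posedness in $\ca P_2$ follows from the moment bounds in (iii).

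The main obstacle is the second step. It requires justifying that the attractor convergence holds jointly for two lifted initial points with different measure components, and that $Y^i_t=\phi_t(\omega,Y_0^i,\mu^i)$ holds pathwise with a random initial point, so that independence and measurability arguments apply. The first item I would verify is the measurable dependence of $\phi_t$ on $y$, which follows from the flow regularity of \cite{gess2025random}. Second, I would check that both attractor points project to the same $a(\omega)$, which holds because the singleton attractor has measure component $\mu^\infty$ and $S_t\mu^i\to\mu^\infty$ deterministically.
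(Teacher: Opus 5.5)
Your proposal is correct and follows essentially the same route as the paper. Both arguments get tightness from the marginals, use weak synchronisation to concentrate every subsequential limit on $\Delta$, identify that limit with $\mu^\infty_\Delta$ through its first marginal, and disintegrate over the initial law; the only difference is that you average the convergence in probability over $\pi_0$ before identifying the limit, whereas the paper identifies $\mu^\infty_\Delta$ for fixed $(y_1,y_2)$ and then integrates the weak convergence. One small correction: well-posedness in $\ca P_2(\RR^d\times\RR^d)$ comes from $\Law(Y^i_t)=S_t\mu^i\in\ca P_2(\RR^d)$, not from (iii), which only bounds first moments.
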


\begin{proof}
	Let $\eta\in\ca P_2(\RR^{2d})$ be an arbitrary initial joint
	distribution and set
	\begin{equation}
		\mu\coloneqq\pi_1\eta,\qquad \nu\coloneqq\pi_2\eta.
	\end{equation}
	Consider \eqref{eq:prove_uniqueness_coupled_MVSDE}--\eqref{eq:prove_uniqueness_coupled_MVSDE1}
	with initial conditions $(y_1,\mu)$ and $(y_2,\nu)$, respectively, where
	$(y_1,y_2)\in\RR^d\times\RR^d$.
	Let
	\begin{equation}
	\lambda_t\coloneqq
	\Law\big(\phi_t(\cdot,y_1,\mu),\phi_t(\cdot,y_2,\nu)\big),
	\end{equation}
	where $\phi$ is defined in \eqref{definedRDS}.
	It follows from \cref{prop:characterisation_of_E_0} that both marginals
	of $\lambda_t$ converge weakly to $\mu^\infty$. Thus, the family
	$(\lambda_t)_{t\geq0}$ is tight. Hence, by Prokhorov's theorem, for every
	sequence $t_n\to\infty$, there exist a subsequence $(t_{n_k})_{k\geq1}$
	and $\lambda\in\ca P(\RR^{2d})$ such that
	\begin{equation}
	\lambda_{t_{n_k}}\xrightarrow[k\to\infty]{w}\lambda.
	\end{equation}

	By weak synchronisation of the coupled McKean--Vlasov system
	\eqref{eq:prove_uniqueness_coupled_MVSDE}--\eqref{eq:prove_uniqueness_coupled_MVSDE1},
	\begin{equation}
		|\phi_t(\cdot,y_1,\mu)-\phi_t(\cdot,y_2,\nu)|
		\longrightarrow0
		\qquad\text{in probability}.
	\end{equation}
	Consider the bounded continuous function
	$
	h(x,y)=|x-y|\wedge1.
	$
	By the dominated convergence theorem,
	\begin{align}
		\EE h\big(
		\phi_{t_{n_k}}(\cdot,y_1,\mu),
		\phi_{t_{n_k}}(\cdot,y_2,\nu)
		\big)
		=
		\int_{\RR^{2d}}h(z_1,z_2)\,\d\lambda_{t_{n_k}}(z_1,z_2)
		\xlongrightarrow[k\to\infty]{}0.
	\end{align}
	Using the weak convergence of $\lambda_{t_{n_k}}$, we obtain
	\begin{equation}
		\int_{\RR^{2d}}h(x,y)\,\d\lambda(x,y)=0,
	\end{equation}
	which implies that $h=0$ $\lambda$-almost surely.
	Since $\{h=0\}=\Delta$, we have $\lambda(\Delta)=1$.
	Since both marginals of $\lambda$ are equal to $\mu^\infty$, it follows that
	\begin{equation}
	\lambda=\mu^\infty_\Delta.
	\end{equation}
	Since every subsequential limit is equal to $\mu^\infty_\Delta$, we conclude that
	\begin{align}
		\Law\big(
		\phi_t(\cdot,y_1,\mu),
		\phi_t(\cdot,y_2,\nu)
		\big)
		\xrightarrow[t\to\infty]{w}
		\mu^\infty_\Delta.
		\label{eq:joint_law_fixed_initial_data}
	\end{align}

	Conditioning on the initial data gives
	\begin{align}
		\Law(Y_t^1,Y_t^2)
		=
		\int_{\RR^{2d}}
		\Law\big(
		\phi_t(\cdot,y_1,\mu),
		\phi_t(\cdot,y_2,\nu)
		\big)
		\,\d\eta(y_1,y_2).
		\label{eq:linearity_of_joint_law}
	\end{align}
	For every $f\in C_b(\RR^{2d})$, by the dominated convergence theorem,
	\begin{align}
		\lim_{t\to\infty}\EE f(Y_t^1,Y_t^2)
		=\,\,&
		\int_{\RR^{2d}}
		\lim_{t\to\infty}
		\EE f\big(
		\phi_t(\cdot,y_1,\mu),
		\phi_t(\cdot,y_2,\nu)
		\big)
		\,\d\eta(y_1,y_2)\\
		=\,\,&
		\int_{\RR^{2d}}
		f(x,y)\,\d\mu^\infty_\Delta(x,y),
	\end{align}
	where \eqref{eq:joint_law_fixed_initial_data} is used in the last equality.
	Therefore,
	\begin{equation}
		\Law(Y_t^1,Y_t^2)
		\xrightarrow[t\to\infty]{w}
		\mu^\infty_\Delta.
		\label{eq:weak_limit_joint_law}
	\end{equation}
	%It remains to verify $\mu^{\infty}_{\Delta}$ is an invariant probability measure. Start
	%\eqref{eq:uniqueness_two_point_MVSDE_1}%--
	%\eqref{eq:uniqueness_two_point_MVSDE_2}
	%from
	%$(Y_0^1,Y_0^2)\sim\mu^{\infty}_{\Delta}.
	%$
	%Then
	%\begin{align}
	%	\Law(Y_0^1)=\Law(Y_0^2)=\mu^\infty.
	%\end{align}
	%Since $\mu^\infty$ is invariant for the one-point McKean--Vlasov SDE,
	%both marginals remain equal to $\mu^\infty$ for all $t\geq0$, and the two equations have identical coefficients.
	%Moreover, $Y_0^1=Y_0^2$ almost surely, together with pathwise uniqueness implies 
	%$
	%		Y_t^1=Y_t^2, \text{a.s. for all }t\geq0.
	%$
	%	Thus,
	%\begin{align}
	%\Law(Y_t^1,Y_t^2)=\mu^{\infty}_{\Delta},
	%	\qquad t\geq0,
	%	\end{align}
%and therefore $\mu^{\infty}_{\Delta}$ is an invariant
%probability measure of the two-point McKean--Vlasov system.
\end{proof}

\begin{remark}
Generalised strong mixing with respect to $ \mu^{\infty}_{\Delta}$ implies uniqueness of the invariant probability measure of the two-point motion in $\ca{P}_2(\RR^{2d})$.
Indeed, let $\Pi\in\ca P_2(\RR^{2d})$ be an invariant probability
measure and initiate the two-point motion from  $\Pi$. Invariance gives
\begin{equation}
	\Law(Y_t^1,Y_t^2)
	=
	\Pi,
	\qquad t\geq0.
\end{equation}
On the other hand, generalised strong mixing  in $\ca{P}_2(\RR^{2d})$ yields for any $(\Law(Y_0^1, Y_0^2))\in \ca P_2(\RR^{d}\times \RR^d)$,
\begin{align}
	\Law(Y_t^1,Y_t^2)
	\xrightarrow[t\to\infty]{w}
	\mu^{\infty}_{\Delta}.
\end{align}
Therefore,
$
\Pi=\mu^{\infty}_{\Delta},
$
which proves uniqueness.
\end{remark}

	\subsection{Application: Ensemble Kalman sampler}	
	In the following, we apply our main result \cref{thm:weak_synchronisation_MVSDE} to the coupled ensemble Kalman sampler. We first introduce the notation.   Let ${\sf{N}}_{\sf{m}, \sf{C}}$ be the normal distribution with mean ${\sf{m}}\in \RR^d$ and covariance ${\sf{C}}\in \bb{S}^d_{\succ 0}$, i.e.
	\begin{align}
		{\sf{N}}_{\sf{m}, \sf{C}}(y)=\frac{1}{(2\pi)^{d/2}({\rm{det}}\, \sf{C})^{1/2}}\exp\Big(-\frac{1}{2}|y-\sf{m}|_{\sf{C}}^2\Big)\,,
	\end{align}
Let $\bb{S}^d, \bb{S}_{\succ 0}^d, \bb{S}_{\succcurlyeq 0}^d $ denote the sets of symmetric,  symmetric positive-definite, and symmetric positive-semidefinite matrices in $\RR^{d\times d}$ respectively. We say that  $A\in SO(d)$ if $A^T A={\bf{I}}_{d\times d}$  and ${\rm{det}}(A)=1$. For a matrix $C\in\bb{S}^d_{\succeq 0}$, $C^{1/2}$ stands for the symmetric square root of $C$, and $\lambda_{\max}(C), \lambda_{\min}(C)$ denote the largest, smallest eigenvalues of $C$, respectively. For a vector $\xi\in \RR^d$ and $C\in \bb{S}_{\succ 0}^d$, $|\xi|_C^2$ stands for $\langle \xi, C^{-1}\xi\rangle$. We denote by $\|C\|_2$ the spectral norm of $C\in \RR^{d\times d}$ given by $|\lambda_{\max}(CC^T)|^{1/2}$, and by $|C|$ the Hilbert--Schmidt norm given by $|\sum_{i, j=1}^d C_{ij}^2|^{1/2}$.  Given $\mu\in \ca{P}_{2, +}(\RR^d)$ with mean $m(\mu)\coloneqq \int_{\RR^d} y\,\d \mu\in \RR^d$ and  covariance matrix ${\rm{Cov}}(\mu)\coloneqq\int_{\RR^d}(y-m(\mu))\otimes(y-m(\mu))\d \mu$, we define 
		\begin{align}
			T_{m(\mu), \sqrt{{\rm{Cov}}(\mu)}}:\,&\RR^d\to \RR^d\\
			&\,y\mapsto  \left(\sqrt{{\rm{Cov}}(\mu)}\right)^{-1}(y-m(\mu))\,.
		\end{align}
		%i.e. $T_{m(\mu), \sqrt{{\rm{Cov}}(\mu)}}^{-1}(y)=\sqrt{{\rm{Cov}}(\mu)}\,y+m(\mu)$. 
        We call $(T_{m(\mu), \sqrt{{\rm{Cov}}(\mu)}})\mu$ a normalization of $\mu$.
	
	Let
        \begin{align}
        \mathcal{P}_{2, +}(\RR^d)\coloneqq\,\,& \big\{\mu\in \mathcal{P}_2(\RR^d): {\rm{Cov}}(\mu)\succ 0\big\}\,,\\
	\widetilde{\ca P}_{2,+}(\RR^{2d})
	\coloneqq\,\,&
	\Big\{
	\eta\in\ca P_2(\RR^{2d}):
	\pi_1\eta,\pi_2\eta\in\ca P_{2,+}(\RR^d)
	\Big\}.
\end{align}

		Consider the coupled ensemble Kalman sampler system
		\begin{subequations}\label{eksdecoupled}
			\begin{align}
				\d Y_t= & \, -{\rm{Cov}}(\mu_t)\nabla V(Y_t)\,\d t+\sqrt{2{\rm{Cov}}(\mu_t)}\,\d W_t\, , \quad Y_0=y\in \RR^d\, , \label{eksdecoupledSDE}\\
				\partial_t \mu_t =&\,D^2:\left({\rm {Cov}}(\mu_t)\mu_t\right) +\nabla\cdot({\rm {Cov}}(\mu_t) \nabla V \mu_t)\, ,\;\;\;\;\;\mu_0=\mu\in \mathcal{P}_{2, +}(\RR^d) \,. \label{eksdecoupledPDE}
			\end{align}
		\end{subequations}
In \cite{gess2025random}, we constructed an RDS  $\varphi^{\rm{EKS}}$ associated with \eqref{eksdecoupledSDE}-- \eqref{eksdecoupledPDE}
	 via rough path theory.	
	\begin{theorem}
		Let 
        \begin{equation}
		 V(y)\coloneqq \frac{1}{2}|y-{\sf{m}}|_{{\sf{C}}}^2,\,\,\,\,\,{\text{with mean}}\,\,{\sf{m}}\in \RR^d \,\,{\text{and covariance}}\,\,{\sf{C}}\in\,\bb{S}_{\succ 0}^d\,. 
         \end{equation}
         The RDS $\varphi^{\rm{EKS}}$ associated with \eqref{eksdecoupled} exhibits conditional weak synchronisation on  $\RR^d\times \ca{P}_{2, +}(\RR^d)$ and its %minimal 
         weak point $\RR^d\times \ca{P}_{2, +}(\RR^d)$-attractor is given by $$\Big\{{\sf{m}}+\int_{-\infty}^0 \sqrt{2 {\sf{C}}}\, {\rm{e}}^{s}  \,\d W_s(\cdot)\Big\}\times \{\mathsf{N}_{{\sf{m}}, {\sf{C}}}\}.$$
    In particular, the two-point ensemble Kalman sampler is generalised strongly mixing in $\widetilde{\ca P}_{2,+}(\RR^{2d})$ with respect to $\mu^\infty_\Delta$.
	\end{theorem}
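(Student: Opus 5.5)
The plan is to apply \cref{thm:weak_synchronisation_MVSDE} on the state space $\RR^d\times\ca P_{2,+}(\RR^d)$, which is invariant under $S_t$. Covariance stays positive definite along \eqref{eksdecoupledPDE}, since for quadratic $V$ the covariance solves a matrix Riccati ODE. The work then splits into three pieces: the PDE input, the frozen-SDE input, and the identification of $\mathbf E_0$ via \cref{prop:characterisation_of_E_0}.

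\emph{PDE input.} For $V(y)=\frac12|y-{\sf m}|_{\sf C}^2$ we have $\nabla V(y)={\sf C}^{-1}(y-{\sf m})$. Hence the mean $m_t$ and covariance $C_t$ of $\mu_t$ solve closed ODEs:
\begin{equation}
\dot m_t=-C_t{\sf C}^{-1}(m_t-{\sf m}),\qquad \dot C_t=-2C_t{\sf C}^{-1}C_t+2C_t .
\end{equation}
Since $C_0\succ0$, one gets $C_t\to{\sf C}$ and $m_t\to{\sf m}$ exponentially, and $C_t$ stays uniformly bounded above and away from $0$ for $t\ge1$. Exponential $\ca W_2$-convergence $S_t\mu\to{\sf N}_{{\sf m},{\sf C}}$ for non-Gaussian $\mu\in\ca P_{2,+}$ is the result of \cite{BurgerEtAl2025}. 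This gives \cref{assum:existence_of_mu_infty} on $\ca P_{2,+}$. The proof of \cref{thm:weak_synchronisation_MVSDE} only uses this restricted version, so one just replaces $\ca P_2$ by $\ca P_{2,+}$ throughout.

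\emph{Frozen SDE.} Freezing at $\mu^\infty={\sf N}_{{\sf m},{\sf C}}$ gives the Ornstein--Uhlenbeck equation $\d Y^\infty_t=-(Y^\infty_t-{\sf m})\d t+\sqrt{2{\sf C}}\,\d W_t$. Its flow is explicit, $\varphi^{\rm Fro}(t,\omega,y)={\sf m}+e^{-t}(y-{\sf m})+\int_0^t e^{-(t-s)}\sqrt{2{\sf C}}\,\d W_s$. Therefore
\begin{equation}
\varphi^{\rm Fro}(t,\theta_{-t}\omega,y)={\sf m}+e^{-t}(y-{\sf m})+\int_{-t}^0 e^{s}\sqrt{2{\sf C}}\,\d W_s(\omega).
\end{equation}
As $t\to\infty$ this converges a.s. to $a(\omega)={\sf m}+\int_{-\infty}^0\sqrt{2{\sf C}}\,e^s\,\d W_s$, uniformly on bounded sets of $y$. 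Pushing $\mu^\infty$ forward (Gaussian tails give the needed uniform integrability) yields $\mu^{\rm Fro}_\omega=\delta_{a(\omega)}$, a compact singleton. Strong mixing of $Q_t$ and the bound \eqref{es:exponential_ergodicity_frozen} hold because $Q_t^*$ is a $\ca W_1$-contraction, $\ca W_1(Q_t^*\mu,Q_t^*\nu)\le e^{-t}\ca W_1(\mu,\nu)$ via the synchronous coupling. With $d_w$ taken as a bounded-Lipschitz metric, the same coupling gives $d_w(Q^*_t\mu,Q^*_t\nu)\le d_w(\mu,\nu)$, since $y\mapsto\EE f(\varphi_t(y))$ is again bounded by $1$ and $1$-Lipschitz.

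\emph{Condition (iii) and $\mathbf E_0$.} We have $b(y,\mu)=-{\rm Cov}(\mu){\sf C}^{-1}(y-{\sf m})$ and $\sigma\sigma^T=2{\rm Cov}(\mu)$. The estimate $|{\rm Cov}(\mu)-{\rm Cov}(\nu)|\lesssim(M_2(\mu)^{1/2}+M_2(\nu)^{1/2}+1)\ca W_2(\mu,\nu)$ gives \eqref{eq:E_0_local_Lipschitz} with the factor $(1+|y|)$. The uniform moment bound \eqref{es:uniform_in_time_estimate} follows from Itô's formula for $|Y_t-{\sf m}|^2_{\sf C}$. The drift contributes $-2\langle Y-{\sf m},{\sf C}^{-1}C_t{\sf C}^{-1}(Y-{\sf m})\rangle\le-2c|Y-{\sf m}|_{\sf C}^2$ for $t\ge1$, using the uniform lower bound on $C_t$. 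The Itô correction is bounded by $\sup_t{\rm tr}(C_t)$. Grönwall then gives $\sup_t\EE|Y_t|^2<\infty$. \cref{prop:characterisation_of_E_0} now yields $\mathbf E_0=\RR^d\times\ca P_{2,+}(\RR^d)$, and \cref{thm:weak_synchronisation_MVSDE} gives the attractor $\{a(\omega)\}\times\{{\sf N}_{{\sf m},{\sf C}}\}$.

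\emph{Two-point statement.} Conditional weak synchronisation means $|\phi_t(\cdot,y_1,\mu)-\phi_t(\cdot,y_2,\nu)|\to0$ in probability for all $(y_i,\cdot)\in\mathbf E_0$. The argument of \cref{thm:weak_synchro_generalised_strong_mixing} then goes through verbatim for initial laws $\eta\in\widetilde{\ca P}_{2,+}(\RR^{2d})$, because both marginals of $\eta$ lie in $\ca P_{2,+}$ and the measure argument of each component is exactly $S_t\pi_i\eta$. The conclusion is $\Law(Y^1_t,Y^2_t)\to\mu^\infty_\Delta$.

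\emph{Main obstacle.} I expect the hardest part to be the uniformity needed in the PDE input: exponential convergence for genuinely non-Gaussian initial data, together with uniform-in-time two-sided covariance bounds to control the moments. The first step to try is the explicit Riccati analysis of $(m_t,C_t)$ above, which needs only $C_0\succ0$ and not Gaussianity, with \cite{BurgerEtAl2025} cited for the $\ca W_2$ convergence of the full law.
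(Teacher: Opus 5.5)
Your proposal is correct and follows essentially the same route as the paper. You cite \cite{BurgerEtAl2025} for exponential $\ca W_2$-convergence on $\ca P_{2,+}(\RR^d)$ and use the explicit Ornstein--Uhlenbeck flow to get $\mu^{\rm Fro}_\omega=\delta_{a(\omega)}$. You then verify the hypotheses of \cref{prop:characterisation_of_E_0} (bounded-Lipschitz contraction via the synchronous coupling, the Lipschitz bound on ${\rm Cov}$, and a uniform second-moment bound by It\^o and Gr\"onwall) and conclude with \cref{thm:weak_synchronisation_MVSDE}; the one minor difference is that you get the uniform covariance bounds from the explicit Riccati equation in the ${\sf C}$-weighted norm, whereas the paper derives $|{\rm Cov}(\mu_t)-{\sf C}|\le K{\rm e}^{-t}$ from the $\ca W_2$-rate, and your version also justifies the invariance of $\ca P_{2,+}(\RR^d)$ and spells out the two-point statement, which the paper leaves implicit.
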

	\begin{proof}
		First of all, it follows from \cite[Theorem 1.20]{BurgerEtAl2025} that, for any $\mu_0\in \ca{P}_{2, +}(\RR^d)$, the solution to \eqref{eksdecoupledPDE} admits an equilibrium $\mu^{\infty}={\sf{N}}_{{\sf{m}}, {\sf{C}}}$  and  satisfies
		\begin{align}
			\begin{split}
				\mathcal{W}_2(\mu_t, {\sf{N}}_{{\sf{m}}, {\sf{C}}})
				\leq\,\,& {\rm{e}}^{-t}\bar{\lambda}({\sf{C}}, {\rm{Cov}}(\mu_0))\Big[\inf_{R\in SO(d)}\mathcal{W}_2(R_\#\bar{\mu}_0, {\sf{N}}_{0, {\rm{Id}}})^2+|m(\mu_0)-{\sf{m}}|_{{\rm{Cov}}(\mu_0)}^2\\
				&\,\,\,\,\,\,\,\,\,\,\,\,\,\,\,\,\,\,\,\,\,\,\,\,\,\,\,\,\,\,\,\,\,\,\,\,\,\,\,\,\,\,\,\,\,\,\,\,\,\,\,\,\,\,\,\,\,\,+|{\rm{Id}}-({\sf{C}}^{1/2}{\rm{Cov}}(\mu_0)^{-1}{\sf{C}}^{1/2})^{1/2}|^2\Big]^{1/2}\,,\label{es:ergodicityEKS}
			\end{split}
		\end{align}
		 where $\bar{\mu}_0$ is the normalization of $\mu_0$ and  $\bar{\lambda}({\sf{C}}, {\rm{Cov}}(\mu_0))\coloneqq \|{\sf{C}}\|_2\max \Big\{1, \|{\sf{C}}^{-1/2}{\rm{Cov}}(\mu_0){\sf{C}}^{-1/2}\|_2\Big\}$. For notational simplicity, we set $\lambda_0(\mu_0)\coloneqq \bar{\lambda}({\sf{C}}, {\rm{Cov}}(\mu_0))\Big[\inf_{R\in SO(d)}\mathcal{W}_2(R_\#\bar{\mu}_0, {\sf{N}}_{0, {\rm{Id}}})^2+|m(\mu_0)-{\sf{m}}|_{{\rm{Cov}}(\mu_0)}^2+|{\rm{Id}}-({\sf{C}}^{1/2}{\rm{Cov}}(\mu_0)^{-1}{\sf{C}}^{1/2})^{1/2}|^2\Big]^{1/2}$.
         
Substituting $\mu^{\infty}={\sf{N}}_{{\sf{m}}, {\sf{C}}}$ into \eqref{eksdecoupledSDE}, we have the frozen SDE 
		\begin{equation}
			\d Y^{\infty}_t=-(Y^{\infty}_t-{\sf{m}})\d t+\sqrt{2 {\sf{C}}}\,\d W_t,\,\,\,\,\,\,Y^{\infty}_0=y\,,
		\end{equation}
and its corresponding RDS 
		\begin{equation}
			\varphi^{\rm{Fro}}(t, \omega)y={\sf{m}}+{\rm{e}}^{-t}(y-{\sf{m}})+\int_0^t\sqrt{2 {\sf{C}}} {\rm{e}}^{-(t-s)}\d W_s(\omega)\,.
		\end{equation}
		Applying \cite[Example 3.8, Theorem 3.12]{FlandoliGessScheutzow.2017.PTRF511}, we know that $\varphi^{\rm{Fro}}$  exhibits weak synchronisation with the minimal weak point attractor ${\rm{supp}}(\mu^{\rm{Fro}}_{\omega})$, $\PP$-a.s. where
		\begin{equation}
			\mu_{\omega}^{\rm{Fro}}=\lim_{n\to\infty}
			\varphi^{\rm{Fro}}(n, \theta_{-n}\omega)\mathsf{N}_{{\sf{m}}, {\sf{C}}}=\delta_{{\sf{m}}+\int_{-\infty}^0 \sqrt{2 {\sf{C}}} \,{\rm{e}}^{s}  \,\d W_s(\omega)}\,,
		\end{equation}
		where for the second identity, we used the fact that for any $y\in\RR^d$,
		\begin{align}
			\lim_{n\to \infty}	\varphi^{\rm{Fro}}(n, \theta_{-n}\omega)y=\,\,&	\lim_{n\to \infty}\left({\sf{m}}+{\rm{e}}^{-n}(y-{\sf{m}})+\int_{-n}^0\sqrt{2 {\sf{C}}}\, {\rm{e}}^{s}\d W_s(\omega)\right)\\
            =\,\,&{\sf{m}}+\int_{-\infty}^0 \sqrt{2 {\sf{C}}}\,{\rm{e}}^{s}  \,\d W_s(\omega)\,.
		\end{align}
	According to \cref{thm:weak_synchronisation_MVSDE}, it suffices to show that  ${\bf{E}_0}=\RR^d\times \ca{P}_{2, +}(\RR^d)$. To prove it, we verify the conditions in \cref{prop:characterisation_of_E_0}. 	
	To verify \eqref{es:exponential_ergodicity_frozen}, let $d_w$ denote
the bounded-Lipschitz metric, which metrises the weak topology on
$\ca{P}(\RR^d)$. For any $f$ with
$\|f\|_\infty\leq1$ and $\operatorname{Lip}(f)\leq1$, define
\begin{align}
	g_t(x)
	\coloneqq
	\EE f\big(
	{\sf m}+{\rm e}^{-t}(x-{\sf m})
	+\int_0^t\sqrt{2{\sf C}}\,{\rm e}^{-(t-s)}\,\d W_s
	\big).
\end{align}
Then $\|g_t\|_\infty\leq1$ and
$\operatorname{Lip}(g_t)\leq {\rm e}^{-t}\leq1$. Hence,
\begin{equation}
	\big|\EE f(Y_t^{\infty,\mu})-\EE f(Y_t^{\infty,\nu})\big|
	=
	|\mu(g_t)-\nu(g_t)|
	\leq d_w(\mu,\nu).
\end{equation}
Taking the supremum over $f$ yields
\begin{equation}
	d_w\big(\Law(Y_t^{\infty,\mu}),
	\Law(Y_t^{\infty,\nu})\big)
	\leq d_w(\mu,\nu),
	\qquad t\geq0.
\end{equation}
Condition \eqref{eq:E_0_local_Lipschitz} follows directly from the estimates,  for all $\mu,\nu\in\mathcal{P}_2(\RR^d),$
		\begin{align}
			|\sqrt{{\rm{Cov}}(\mu)}-\sqrt{{\rm{Cov}}(\nu)}\,|\leq &\,\,\sqrt{2}\mathcal{W}_2(\mu,\nu)\,,\\[1mm]
			|{\rm{Cov}}(\mu)-{\rm{Cov}}(\nu)|\leq&\,\,2\big(M_2(\mu)^{1/2}+M_2(\nu)^{1/2}\big)\mathcal{W}_2(\mu,\nu)\,.\label{eq:Lip_of_cov}
		\end{align}
        
It remains to show that for any $(y, \mu_0)\in\RR^d\times \ca{P}_{2, +}(\RR^d)$, $(\Law(Y_t, \mu_t))_{t\geq 0}$ is tight. Due to the exponential ergodicity, and the time-continuity in $(\ca{P}_2(\RR^d), \ca{W}_2)$ \footnote{In \cite[Theorem 3.4]{gess2025random}, we proved the time continuity of $(\mu_t)_{t\geq 0}$ with respect to the tailor-made metric $\ca{D}_2$. The time continuity of $(\mu_t)_{t\geq 0}$ with respect to  $\ca{W}_2$ then follows from the equivalence of the topologies induced by $\ca{D}_2$ and $\ca{W}_2$; see \cite[Lemma 2.2]{gess2025random}.}, we know that the set $\{\mu_t:t\geq0\}$ is relatively compact in
$(\ca{P}_2(\RR^d),\ca{W}_2)$ and $(\delta_{\mu_t})_{t\geq0}$ is tight in $\ca{P}(\ca{P}_2(\RR^d))$. Now, we show that $\sup_{t\geq 0}\EE|Y_t|^2<\infty$. Considering 		\begin{align}
	M_2(\mu^\infty)
	=\,&\int_{\RR^d}|y|^2\,\d\mu^\infty(y)\\
	=\,&\int_{\RR^d}|y-{\sf m}|^2\,\d\mu^\infty(y)+|{\sf m}|^2\\
	=\,&{\rm {Tr}}({\sf C})+|{\sf m}|^2\,,
	\label{eq:momentestimatinfinityEKS}
\end{align}
        and 
\begin{equation}
			M_2(\mu_t)\leq\, 2\mathcal{W}_2(\mu_t, \mu^{\infty})^2+2M_2(\mu^{\infty})
			\leq\,2\lambda_0^2(\mu_0){\rm{e}}^{-2t}+2|{\sf{m}}|^2+2{\rm{Tr}}({\sf{C}})\,,
		\end{equation} together with \eqref{eq:Lip_of_cov}, we obtain 
\begin{align}
|{\rm{Cov}}(\mu_t)-{\sf{C}}|\leq\,&2\big(\sqrt{2\lambda_0(\mu_0)^2{\rm{e}}^{-2t}+2|\sf{m}|^2+2{\rm{Tr}}(\sf{C})}+\sqrt{2|\sf{m}|^2+2{\rm{Tr}}(\sf{C})}\big)\lambda_0(\mu_0) {\rm{e}}^{-t}\,,\\[1mm]
    |{\rm{Cov}}(\mu_t)|\leq\,&|{\sf{C}}|+2\big(\sqrt{2\lambda_0(\mu_0)^2{\rm{e}}^{-2t}+2|\sf{m}|^2+2{\rm{Tr}}(\sf{C})}+\sqrt{2|\sf{m}|^2+2{\rm{Tr}}(\sf{C})}\big)\lambda_0(\mu_0) {\rm{e}}^{-t}\,.
\end{align}
Using It\^{o}'s formula and taking expectations, we obtain
\begin{align}
	\frac{\d}{\d t}\EE|Y_t-{\sf{m}}|^2
	=\,&-2\EE\left\langle
	{\rm{Cov}}(\mu_t){\sf{C}}^{-1}(Y_t-{\sf{m}}),
	Y_t-{\sf{m}}
	\right\rangle
	+2{\rm{Tr}}({\rm{Cov}}(\mu_t))\\
	=\,&-2\EE|Y_t-{\sf{m}}|^2
	-2\EE\left\langle
	({\rm{Cov}}(\mu_t)-{\sf{C}}){\sf{C}}^{-1}(Y_t-{\sf{m}}),
	Y_t-{\sf{m}}
	\right\rangle\\
	&+2{\rm{Tr}}({\rm{Cov}}(\mu_t))\\
	\leq\,&
	\left(-2+2|{\sf{C}}^{-1}|
	|{\rm{Cov}}(\mu_t)-{\sf{C}}|\right)
	\EE|Y_t-{\sf{m}}|^2
	+2{\rm{Tr}}({\rm{Cov}}(\mu_t)).
\end{align}
By the above estimates, there exists a constant $K>0$, depending only on
${\sf{m}}$, ${\sf{C}}$, and $\lambda_0(\mu_0)$, such that
\begin{align}
	|{\rm{Cov}}(\mu_t)-{\sf{C}}|
	\leq K{\rm{e}}^{-t},
	\qquad
	{\rm{Tr}}({\rm{Cov}}(\mu_t))\leq K,
	\qquad t\geq0.
\end{align}
Hence, setting
$
	h(t)\coloneqq\EE|Y_t-{\sf{m}}|^2,
$
we obtain
\begin{equation}
    h'(t)\leq \left(-2+K{\rm{e}}^{-t}\right)h(t)+K.
\end{equation}
Choose $T>0$ sufficiently large such that $K{\rm{e}}^{-t}\leq1$ for all
$t\geq T$. Then
$$h'(t)\leq K h(t)+K,\,\,\,\,t\in[0,T],\,\,\,\,\,\,\,\,\,\,\,\,h'(t)\leq-h(t)+K,\,\,\,\, t\geq T.$$
Hence, by Gr\"{o}nwall's inequality,
\begin{equation}
	h(t)
	\leq {\rm e}^{Kt}h(0)
	+K\int_0^t {\rm e}^{K(t-s)}\,\d s,
	\,\, 0\leq t\leq T,
\end{equation}
and \begin{equation}
	h(t)\leq {\rm{e}}^{-(t-T)}h(T)+K,\qquad t\geq T.
\end{equation} 
Consequently,
\begin{equation}
	\sup_{t\geq0}\EE|Y_t|^2<\infty,
\end{equation}
%and hence $(\Law(Y_t))_{t\geq0}$ is tight in
%$\ca{P}(\RR^d)$. Together with the tightness of
%$(\delta_{\mu_t})_{t\geq0}$ in
%$\ca{P}(\ca{P}_2(\RR^d))$, this implies that
%\begin{equation}
%	\big(\Law(Y_t,\mu_t)\big)_{t\geq0}
%\end{equation}
%is tight in
%$\ca{P}(\RR^d\times\ca{P}_2(\RR^d))$.
which completes the proof.
\end{proof}

\begin{remark}
Although \cref{thm:weak_synchronisation_MVSDE} and \cref{prop:characterisation_of_E_0} are stated on $\ca{P}_2(\RR^d)$, the same results hold on any subspace $\tilde{\ca{P}}_2(\RR^d)\subseteq\ca{P}_2(\RR^d)$ on which the McKean–Vlasov PDE satisfies \cref{assum:existence_of_mu_infty}. Indeed, the proofs only rely on the separability of the underlying space of probability measures, which is inherited by every subspace of the separable metric space $\ca{P}_2(\RR^d)$.
\end{remark}

    \section{Acknowledgements}

  BG acknowledges support by the Max Planck Society through the Research Group “Stochastic Analysis in the Sciences (SAiS)” and the DFG CRC/TRR 388 “Rough Analysis, Stochastic Dynamics and Related Fields”, Project A11.     RSG is partially funded by the Deutsche Forschungsgemeinschaft (DFG, German Research Foundation) - SPP 2410 Hyperbolic Balance Laws in Fluid Mechanics: Complexity, Scales, Randomness (CoScaRa, DFG Project no. 500873292).  SH is funded by the European Union (ERC, FluCo, grant agreement No. 101088488). Views and opinions expressed are, however,  those of the author(s) only and do not necessarily reflect those of the European Union or of the European Research Council. Neither the European Union nor the granting authority can be held responsible for them.

%\appendix

%\endappendix

\bibliographystyle{Martin}
\bibliography{zong0328}

@article{BurgerEtAl2025,
  author  = {Burger, Martin and Erbar, Matthias and Hoffmann, Franca and Matthes, Daniel and Schlichting, Andr\'e},
  title   = {Covariance-Modulated Optimal Transport and Gradient Flows},
  journal = {Archive for Rational Mechanics and Analysis},
  volume  = {249},
  pages   = {7},
  year    = {2025},
  doi     = {10.1007/s00205-024-02065-w}
}

@misc{LelievrePavliotisRobinSantetStoltz2025,
  author        = {Leli{\`e}vre, Tony and Pavliotis, Grigorios A. and Robin, Gabriel and Santet, Romain and Stoltz, Gabriel},
  title         = {Optimizing the Diffusion Coefficient of Overdamped Langevin Dynamics},
  year          = {2025},
  eprint        = {2404.12087},
  archivePrefix = {arXiv},
  primaryClass  = {math.NA}
}

@article {DelgadinoGvalaniPavliotisSmith2023CMP,
	AUTHOR = {Delgadino, Mat\'{\i}as G. and Gvalani, Rishabh S. and
	Pavliotis, Grigorios A. and Smith, Scott A.},
	TITLE = {Phase transitions, logarithmic {S}obolev inequalities, and
	uniform-in-time propagation of chaos for weakly interacting
	diffusions},
	JOURNAL = {Comm. Math. Phys.},
	FJOURNAL = {Communications in Mathematical Physics},
	VOLUME = {401},
	YEAR = {2023},
	NUMBER = {1},
	PAGES = {275--323},
	ISSN = {0010-3616,1432-0916},
	MRCLASS = {60K35 (82B26)},
	MRNUMBER = {4604897},
	MRREVIEWER = {Adrian\ Muntean},
	DOI = {10.1007/s00220-023-04659-z},
	URL = {https://doi.org/10.1007/s00220-023-04659-z},
}

@article{neamctu2025negativity,
	title={On the negativity of the top Lyapunov exponent for stochastic differential equations driven by fractional Brownian motion},
	author={Neam{\c{t}}u, Alexandra Blessing and Varzaneh, Mazyar Ghani},
	journal={arXiv preprint arXiv:2510.11531},
	year={2025}
}

@article {ChueshovSchmalfu2010JMP,
	AUTHOR = {Chueshov, Igor and Schmalfu\ss , Bj\"{o}rn},
	TITLE = {Master-slave synchronization and invariant manifolds for
	coupled stochastic systems},
	JOURNAL = {J. Math. Phys.},
	FJOURNAL = {Journal of Mathematical Physics},
	VOLUME = {51},
	YEAR = {2010},
	NUMBER = {10},
	PAGES = {102702, 23},
	ISSN = {0022-2488,1089-7658},
	MRCLASS = {60H30 (60H25 82C31)},
	MRNUMBER = {2761307},
	DOI = {10.1063/1.3493646},
	URL = {https://doi.org/10.1063/1.3493646},
}

@article {CaraballoRobinson2004SCL,
	AUTHOR = {Caraballo, Tom\'{a}s and Robinson, James C.},
	TITLE = {Stabilisation of linear {PDE}s by {S}tratonovich noise},
	JOURNAL = {Systems Control Lett.},
	FJOURNAL = {Systems \& Control Letters},
	VOLUME = {53},
	YEAR = {2004},
	NUMBER = {1},
	PAGES = {41--50},
	ISSN = {0167-6911,1872-7956},
	MRCLASS = {60H15 (35R60 93D21)},
	MRNUMBER = {2077187},
	MRREVIEWER = {Sergey\ V.\ Lototsky},
	DOI = {10.1016/j.sysconle.2004.02.020},
	URL = {https://doi.org/10.1016/j.sysconle.2004.02.020},
}

@article {MartinelliScoppola1988CMP,
	AUTHOR = {Martinelli, Fabio and Scoppola, Elisabetta},
	TITLE = {Small random perturbations of dynamical systems: exponential
	loss of memory of the initial condition},
	JOURNAL = {Comm. Math. Phys.},
	FJOURNAL = {Communications in Mathematical Physics},
	VOLUME = {120},
	YEAR = {1988},
	NUMBER = {1},
	PAGES = {25--69},
	ISSN = {0010-3616,1432-0916},
	MRCLASS = {58F30 (35R60 58F13 60J05)},
	MRNUMBER = {972542},
	MRREVIEWER = {S.\ Rajasekar},
	URL = {http://projecteuclid.org/euclid.cmp/1104177655},
}

@article{blessing2026synchronisation,
	title={Synchronization by noise for stochastic differential equations driven by fractional Brownian motion},
	author={Blessing, Alexandra and Varzaneh, Mazyar Ghani},
	journal={arXiv preprint arXiv:2603.12774},
	year={2026}
}

@article{liu2025synchronisation,
	title={Synchronization by degenerate noise},
	author={Liu, Xianming and Sun, Xu},
	journal={arXiv preprint arXiv:2512.18278},
	year={2025}
}

@article {Crauel1991Markovmeasures,
	AUTHOR = {Crauel, Hans},
	TITLE = {Markov measures for random dynamical systems},
	JOURNAL = {Stochastics Stochastics Rep.},
	FJOURNAL = {Stochastics and Stochastics Reports},
	VOLUME = {37},
	YEAR = {1991},
	NUMBER = {3},
	PAGES = {153--173},
	ISSN = {1045-1129},
	MRCLASS = {60J05 (58F11 60H10)},
	MRNUMBER = {1148346},
	MRREVIEWER = {A.\ N.\ Al-Hussaini},
	DOI = {10.1080/17442509108833733},
	URL = {https://doi.org/10.1080/17442509108833733},
}

@book {Crauel2002Randommeasures,
	AUTHOR = {Crauel, Hans},
	TITLE = {Random probability measures on {P}olish spaces},
	SERIES = {Stochastics Monographs},
	VOLUME = {11},
	PUBLISHER = {Taylor \& Francis, London},
	YEAR = {2002},
	PAGES = {xvi+118},
	ISBN = {0-415-27387-0},
	MRCLASS = {60-02 (60B05 60G57)},
	MRNUMBER = {1993844},
	MRREVIEWER = {Natesan\ Renganathan},
}

@article {WFYSPA2018,
	AUTHOR = {Wang, Feng-Yu},
	TITLE = {Distribution dependent {SDE}s for {L}andau type equations},
	JOURNAL = {Stochastic Process. Appl.},
	FJOURNAL = {Stochastic Processes and their Applications},
	VOLUME = {128},
	YEAR = {2018},
	NUMBER = {2},
	PAGES = {595--621},
	ISSN = {0304-4149,1879-209X},
	MRCLASS = {60J75 (47G20 60G52 60H10)},
	MRNUMBER = {3739509},
	MRREVIEWER = {Hong\ Zhang},
	DOI = {10.1016/j.spa.2017.05.006},
	URL = {https://doi.org/10.1016/j.spa.2017.05.006},
}

@article {BedrossianBlumenthalSam2022Inven,
	AUTHOR = {Bedrossian, Jacob and Blumenthal, Alex and Punshon-Smith, Sam},
	TITLE = {A regularity method for lower bounds on the {L}yapunov
	exponent for stochastic differential equations},
	JOURNAL = {Invent. Math.},
	FJOURNAL = {Inventiones Mathematicae},
	VOLUME = {227},
	YEAR = {2022},
	NUMBER = {2},
	PAGES = {429--516},
	ISSN = {0020-9910,1432-1297},
	MRCLASS = {60H15 (35B65 35H10 37D25 37H15 58J65)},
	MRNUMBER = {4372219},
	MRREVIEWER = {Xiaobin\ Sun},
	DOI = {10.1007/s00222-021-01069-7},
	URL = {https://doi.org/10.1007/s00222-021-01069-7},
}

@article {BedrossianBlumenthalJEMS,
	AUTHOR = {Bedrossian, Jacob and Blumenthal, Alex and Punshon-Smith, Sam},
	TITLE = {Lagrangian chaos and scalar advection in stochastic fluid
	mechanics},
	JOURNAL = {J. Eur. Math. Soc. (JEMS)},
	FJOURNAL = {Journal of the European Mathematical Society (JEMS)},
	VOLUME = {24},
	YEAR = {2022},
	NUMBER = {6},
	PAGES = {1893--1990},
	ISSN = {1435-9855,1435-9863},
	MRCLASS = {60H15 (35Q30 37D25 37L55 76F20)},
	MRNUMBER = {4404792},
	MRREVIEWER = {Xuhui\ Peng},
	DOI = {10.4171/jems/1140},
	URL = {https://doi.org/10.4171/jems/1140},
}

@article {BlumenthalEngelAlexandra2023PTRF,
	AUTHOR = {Blumenthal, Alex and Engel, Maximilian and Neam\c{t}u,
	Alexandra},
	TITLE = {On the pitchfork bifurcation for the {C}hafee-{I}nfante
	equation with additive noise},
	JOURNAL = {Probab. Theory Related Fields},
	FJOURNAL = {Probability Theory and Related Fields},
	VOLUME = {187},
	YEAR = {2023},
	NUMBER = {3-4},
	PAGES = {603--627},
	ISSN = {0178-8051,1432-2064},
	MRCLASS = {60H15 (37H20 37L55 60H50)},
	MRNUMBER = {4664581},
	MRREVIEWER = {Peter\ E.\ Kloeden},
	DOI = {10.1007/s00440-023-01235-3},
	URL = {https://doi.org/10.1007/s00440-023-01235-3},
}

@incollection {ScheutzowVorkastner2018Springer,
	AUTHOR = {Scheutzow, Michael and Vorkastner, Isabell},
	TITLE = {Synchronization, {L}yapunov exponents and stable manifolds for
	random dynamical systems},
	BOOKTITLE = {Stochastic partial differential equations and related fields},
	SERIES = {Springer Proc. Math. Stat.},
	VOLUME = {229},
	PAGES = {359--366},
	PUBLISHER = {Springer, Cham},
	YEAR = {2018},
	ISBN = {978-3-319-74929-7; 978-3-319-74928-0},
	MRCLASS = {37H10 (37D25)},
	MRNUMBER = {3828181},
	DOI = {10.1007/978-3-319-74929-7_2},
	URL = {https://doi.org/10.1007/978-3-319-74929-7_2},
}

@article {Newman2018ETDS,
	AUTHOR = {Newman, Julian},
	TITLE = {Necessary and sufficient conditions for stable synchronization
	in random dynamical systems},
	JOURNAL = {Ergodic Theory Dynam. Systems},
	FJOURNAL = {Ergodic Theory and Dynamical Systems},
	VOLUME = {38},
	YEAR = {2018},
	NUMBER = {5},
	PAGES = {1857--1875},
	ISSN = {0143-3857,1469-4417},
	MRCLASS = {37H10 (60J35)},
	MRNUMBER = {3820004},
	MRREVIEWER = {Maximilian\ Engel},
	DOI = {10.1017/etds.2016.109},
	URL = {https://doi.org/10.1017/etds.2016.109},
}

@incollection {Baxendale1991,
	AUTHOR = {Baxendale, Peter H.},
	TITLE = {Statistical equilibrium and two-point motion for a stochastic
	flow of diffeomorphisms},
	BOOKTITLE = {Spatial stochastic processes},
	SERIES = {Progr. Probab.},
	VOLUME = {19},
	PAGES = {189--218},
	PUBLISHER = {Birkh\"{a}user Boston, Boston, MA},
	YEAR = {1991},
	ISBN = {0-8176-3477-0},
	MRCLASS = {60H20 (34D08 58G32)},
	MRNUMBER = {1144097},
	MRREVIEWER = {R.\ W. R. Darling},
}

@article {ArnoldCrauelWihstutz1983,
	AUTHOR = {Arnold, L. and Crauel, H. and Wihstutz, V.},
	TITLE = {Stabilization of linear systems by noise},
	JOURNAL = {SIAM J. Control Optim.},
	FJOURNAL = {SIAM Journal on Control and Optimization},
	VOLUME = {21},
	YEAR = {1983},
	NUMBER = {3},
	PAGES = {451--461},
	ISSN = {0363-0129},
	MRCLASS = {93E15 (34F05)},
	MRNUMBER = {696907},
	MRREVIEWER = {O.\ K.\ Zakusilo},
	DOI = {10.1137/0321027},
	URL = {https://doi.org/10.1137/0321027},
}

@article {Tearne2008PTRF,
	AUTHOR = {Tearne, Oliver M.},
	TITLE = {Collapse of attractors for {ODE}s under small random
	perturbations},
	JOURNAL = {Probab. Theory Related Fields},
	FJOURNAL = {Probability Theory and Related Fields},
	VOLUME = {141},
	YEAR = {2008},
	NUMBER = {1-2},
	PAGES = {1--18},
	ISSN = {0178-8051,1432-2064},
	MRCLASS = {60H10 (34D45 34F05 37H10)},
	MRNUMBER = {2372963},
	MRREVIEWER = {Bj\"{o}rn\ Schmalfuss},
	DOI = {10.1007/s00440-006-0051-0},
	URL = {https://doi.org/10.1007/s00440-006-0051-0},
}

@article {ButkovskyScheutzowCMP2020,
	AUTHOR = {Butkovsky, Oleg and Scheutzow, Michael},
	TITLE = {Couplings via comparison principle and exponential ergodicity
	of {SPDE}s in the hypoelliptic setting},
	JOURNAL = {Comm. Math. Phys.},
	FJOURNAL = {Communications in Mathematical Physics},
	VOLUME = {379},
	YEAR = {2020},
	NUMBER = {3},
	PAGES = {1001--1034},
	ISSN = {0010-3616,1432-0916},
	MRCLASS = {60J35 (47D07 60H15)},
	MRNUMBER = {4163359},
	MRREVIEWER = {Sonja\ Cox},
	DOI = {10.1007/s00220-020-03834-w},
	URL = {https://doi.org/10.1007/s00220-020-03834-w},
}

@article {GessJDDE2013,
	AUTHOR = {Gess, Benjamin},
	TITLE = {Random attractors for degenerate stochastic partial
	differential equations},
	JOURNAL = {J. Dynam. Differential Equations},
	FJOURNAL = {Journal of Dynamics and Differential Equations},
	VOLUME = {25},
	YEAR = {2013},
	NUMBER = {1},
	PAGES = {121--157},
	ISSN = {1040-7294,1572-9222},
	MRCLASS = {37Lxx (35B41 35R60 60H15 76S05)},
	MRNUMBER = {3027636},
	DOI = {10.1007/s10884-013-9294-5},
	URL = {https://doi.org/10.1007/s10884-013-9294-5},
}

@article {CaraballoPAMS2007,
	AUTHOR = {Caraballo, Tom\'{a}s and Crauel, Hans and Langa, Jos\'{e} A.
	and Robinson, James C.},
	TITLE = {The effect of noise on the {C}hafee-{I}nfante equation: a
	nonlinear case study},
	JOURNAL = {Proc. Amer. Math. Soc.},
	FJOURNAL = {Proceedings of the American Mathematical Society},
	VOLUME = {135},
	YEAR = {2007},
	NUMBER = {2},
	PAGES = {373--382},
	ISSN = {0002-9939,1088-6826},
	MRCLASS = {60H15 (35K20 35K55 35R60)},
	MRNUMBER = {2255283},
	MRREVIEWER = {Sandra\ Cerrai},
	DOI = {10.1090/S0002-9939-06-08593-5},
	URL = {https://doi.org/10.1090/S0002-9939-06-08593-5},
}

@article {CrauelJDDE1998,
	AUTHOR = {Crauel, Hans and Flandoli, Franco},
	TITLE = {Additive noise destroys a pitchfork bifurcation},
	JOURNAL = {J. Dynam. Differential Equations},
	FJOURNAL = {Journal of Dynamics and Differential Equations},
	VOLUME = {10},
	YEAR = {1998},
	NUMBER = {2},
	PAGES = {259--274},
	ISSN = {1040-7294,1572-9222},
	MRCLASS = {58F12 (34C23 34F05 60H10)},
	MRNUMBER = {1623013},
	MRREVIEWER = {Bohdan\ Maslowski},
	DOI = {10.1023/A:1022665916629},
	URL = {https://doi.org/10.1023/A:1022665916629},
}

@book {ChueshovMonotonelecturenotes,
	AUTHOR = {Chueshov, Igor},
	TITLE = {Monotone random systems theory and applications},
	SERIES = {Lecture Notes in Mathematics},
	VOLUME = {1779},
	PUBLISHER = {Springer-Verlag, Berlin},
	YEAR = {2002},
	PAGES = {viii+234},
	ISBN = {3-540-43246-9},
	MRCLASS = {37H10 (34F05 37C65 37G35 37L30 60H10 82C05)},
	MRNUMBER = {1902500},
	MRREVIEWER = {Bj\"{o}rn\ Schmalfuss},
	DOI = {10.1007/b83277},
	URL = {https://doi.org/10.1007/b83277},
}

@article {ArnoldChueshovDSC,
	AUTHOR = {Arnold, Ludwig and Chueshov, Igor},
	TITLE = {Order-preserving random dynamical systems: equilibria,
	attractors, applications},
	JOURNAL = {Dynam. Stability Systems},
	FJOURNAL = {Dynamics and Stability of Systems. An International Journal},
	VOLUME = {13},
	YEAR = {1998},
	NUMBER = {3},
	PAGES = {265--280},
	ISSN = {0268-1110},
	MRCLASS = {58F12 (28D99 34C35 34F05 35R60 60H25)},
	MRNUMBER = {1645467},
	MRREVIEWER = {Pierre\ A.\ Vuillermot},
	DOI = {10.1080/02681119808806264},
	URL = {https://doi.org/10.1080/02681119808806264},
}

@article{engel2026random,
	title={Random Quadratic Form on a Sphere: Synchronization by Common Noise},
	author={Engel, Maximilian and Shalova, Anna},
	journal={arXiv preprint arXiv:2603.06187},
	year={2026}
}

@article{gess2025random,
	title={Random dynamical systems for McKean--Vlasov SDEs via rough path theory},
	author={Gess, Benjamin and Gvalani, Rishabh S and Hu, Shanshan},
	journal={arXiv preprint arXiv:2507.02449},
	year={2025}
}

@article {CHSV24,
	AUTHOR = {Carrillo, J. A. and Hoffmann, F. and Stuart, A. M. and Vaes,
	U.},
	TITLE = {The mean-field ensemble {K}alman filter: near-{G}aussian
	setting},
	JOURNAL = {SIAM J. Numer. Anal.},
	FJOURNAL = {SIAM Journal on Numerical Analysis},
	VOLUME = {62},
	YEAR = {2024},
	NUMBER = {6},
	PAGES = {2549--2587},
	ISSN = {0036-1429},
	MRCLASS = {60G35 (62F15 65C35)},
	MRNUMBER = {4825256},
	DOI = {10.1137/24M1628207},
	URL = {https://doi.org/10.1137/24M1628207},
}

@ARTICLE{Garbuno-InigoHoffmannLiStuart.2020.SJoADS412,
	author = {Garbuno-Inigo, Alfredo and Hoffmann, Franca and Li, Wuchen and Stuart,
	Andrew M.},
	title = {Interacting Langevin Diffusions: Gradient Structure and Ensemble
	Kalman Sampler},
	journal = {SIAM Journal on Applied Dynamical Systems},
	year = {2020},
	volume = {19},
	pages = {412-441},
	number = {1},
	doi = {10.1137/19M1251655},
	url = {https://doi.org/10.1137/19M1251655}
}

@article {POCReview1,
	AUTHOR = {Chaintron, Louis-Pierre and Diez, Antoine},
	TITLE = {Propagation of chaos: a review of models, methods and
	applications. {I}. {M}odels and methods},
	JOURNAL = {Kinet. Relat. Models},
	FJOURNAL = {Kinetic and Related Models},
	VOLUME = {15},
	YEAR = {2022},
	NUMBER = {6},
	PAGES = {895--1015},
	ISSN = {1937-5093,1937-5077},
	MRCLASS = {82C22 (35Q70 65C35 65P20 82C40 92-10)},
	MRNUMBER = {4489768},
	MRREVIEWER = {Julian\ Tugaut},
	DOI = {10.3934/krm.2022017},
	URL = {https://doi.org/10.3934/krm.2022017},
}

@article {POCReview2,
	AUTHOR = {Chaintron, Louis-Pierre and Diez, Antoine},
	TITLE = {Propagation of chaos: a review of models, methods and
	applications. {II}. {A}pplications},
	JOURNAL = {Kinet. Relat. Models},
	FJOURNAL = {Kinetic and Related Models},
	VOLUME = {15},
	YEAR = {2022},
	NUMBER = {6},
	PAGES = {1017--1173},
	ISSN = {1937-5093,1937-5077},
	MRCLASS = {82C22 (35Q70 65C35 82C40 92-10)},
	MRNUMBER = {4489769},
	MRREVIEWER = {Hui\ Huang},
	DOI = {10.3934/krm.2022018},
	URL = {https://doi.org/10.3934/krm.2022018},
}

@article {meanfieldDuke,
	AUTHOR = {Serfaty, Sylvia},
	TITLE = {Mean field limit for {C}oulomb-type flows},
	NOTE = {With an appendix by Mitia Duerinckx and Serfaty},
	JOURNAL = {Duke Math. J.},
	FJOURNAL = {Duke Mathematical Journal},
	VOLUME = {169},
	YEAR = {2020},
	NUMBER = {15},
	PAGES = {2887--2935},
	ISSN = {0012-7094,1547-7398},
	MRCLASS = {35Q82 (82C22)},
	MRNUMBER = {4158670},
	DOI = {10.1215/00127094-2020-0019},
	URL = {https://doi.org/10.1215/00127094-2020-0019},
}

@article {harrisergodicityTransection,
	AUTHOR = {Eberle, Andreas and Guillin, Arnaud and Zimmer, Raphael},
	TITLE = {Quantitative {H}arris-type theorems for diffusions and
	{M}c{K}ean-{V}lasov processes},
	JOURNAL = {Trans. Amer. Math. Soc.},
	FJOURNAL = {Transactions of the American Mathematical Society},
	VOLUME = {371},
	YEAR = {2019},
	NUMBER = {10},
	PAGES = {7135--7173},
	ISSN = {0002-9947,1088-6850},
	MRCLASS = {60J60 (60H10)},
	MRNUMBER = {3939573},
	MRREVIEWER = {Julian\ Tugaut},
	DOI = {10.1090/tran/7576},
	URL = {https://doi.org/10.1090/tran/7576},
}

@book {bookGeometriccontroltheory,
	AUTHOR = {Jurdjevic, Velimir},
	TITLE = {Geometric control theory},
	SERIES = {Cambridge Studies in Advanced Mathematics},
	VOLUME = {52},
	PUBLISHER = {Cambridge University Press, Cambridge},
	YEAR = {1997},
	PAGES = {xviii+492},
	ISBN = {0-521-49502-4},
	MRCLASS = {93-02 (58E25 93B27)},
	MRNUMBER = {1425878},
	MRREVIEWER = {Heinz\ Sch\"{a}ttler},
}

@BOOK{Arnold.1998.586,
  title = {Random dynamical systems},
  publisher = {Springer-Verlag, Berlin},
  year = {1998},
  author = {Arnold, Ludwig},
  pages = {xvi+586},
  series = {Springer Monographs in Mathematics},
  doi = {10.1007/978-3-662-12878-7},
  isbn = {3-540-63758-3},
  mrclass = {37Hxx (37-02 60H10)},
  mrnumber = {1723992},
  mrreviewer = {Yuri\ Kifer},
  url = {https://doi.org/10.1007/978-3-662-12878-7}
}

@misc{eberle2019stochastic,
  author       = {Andreas Eberle},
  title        = {Stochastic Analysis},
  year         = {2019},
  note         = {Lecture notes, University of Bonn},
  howpublished = {\url{https://uni-bonn.sciebo.de/s/kzTUFff5FrWGAay/download}},
  month        = nov
}

@ARTICLE{Butkovsky.2014.TPA661,
  author = {Butkovsky, O. A.},
  title = {On ergodic properties of nonlinear {M}arkov chains and stochastic
	{M}c{K}ean-{V}lasov equations},
  journal = {Theory Probab. Appl.},
  year = {2014},
  volume = {58},
  pages = {661--674},
  number = {4},
  doi = {10.1137/S0040585X97986825},
  fjournal = {Theory of Probability and its Applications},
  issn = {0040-585X},
  mrclass = {60J10 (37A30 60B10 60H10)},
  mrnumber = {3403022},
  url = {https://doi.org/10.1137/S0040585X97986825}
}

@ARTICLE{FlandoliGessScheutzow.2017.AP1325,
  author = {Flandoli, Franco and Gess, Benjamin and Scheutzow, Michael},
  title = {Synchronization by noise for order-preserving random dynamical systems},
  journal = {Ann. Probab.},
  year = {2017},
  volume = {45},
  pages = {1325--1350},
  number = {2},
  doi = {10.1214/16-AOP1088},
  fjournal = {The Annals of Probability},
  issn = {0091-1798},
  mrclass = {37B25 (37G35 37H15 60H10)},
  mrnumber = {3630300},
  mrreviewer = {Jos\'{e} A. Langa},
  url = {https://doi.org/10.1214/16-AOP1088}
}

@ARTICLE{FlandoliGessScheutzow.2017.PTRF511,
  author = {Flandoli, Franco and Gess, Benjamin and Scheutzow, Michael},
  title = {Synchronization by noise},
  journal = {Probab. Theory Related Fields},
  year = {2017},
  volume = {168},
  pages = {511--556},
  number = {3-4},
  doi = {10.1007/s00440-016-0716-2},
  fjournal = {Probability Theory and Related Fields},
  issn = {0178-8051,1432-2064},
  mrclass = {37H15 (37B25 37G35 60H10)},
  mrnumber = {3663624},
  mrreviewer = {Min\ Zhao},
  url = {https://doi.org/10.1007/s00440-016-0716-2}
}

@ARTICLE{GessTsatsoulis.2024.AP1903,
  author = {Gess, Benjamin and Tsatsoulis, Pavlos},
  title = {Lyapunov exponents and synchronisation by noise for systems of {SPDE}s},
  journal = {Ann. Probab.},
  year = {2024},
  volume = {52},
  pages = {1903--1953},
  number = {5},
  doi = {10.1214/24-aop1690},
  fjournal = {The Annals of Probability},
  issn = {0091-1798},
  mrclass = {60 (35 37H15 37L30)},
  mrnumber = {4791422},
  url = {https://doi.org/10.1214/24-aop1690}
}

@ARTICLE{GessTsatsoulis.2020.SD17,
  author = {Gess, Benjamin and Tsatsoulis, Pavlos},
  title = {Synchronization by noise for the stochastic quantization equation
	in dimensions 2 and 3},
  journal = {Stoch. Dyn.},
  year = {2020},
  volume = {20},
  pages = {2040006, 17},
  number = {6},
  doi = {10.1142/S0219493720400067},
  fjournal = {Stochastics and Dynamics},
  issn = {0219-4937},
  mrclass = {60H15 (35K58 35R60 47D07)},
  mrnumber = {4161970},
  mrreviewer = {Kunwoo Kim},
  url = {https://doi.org/10.1142/S0219493720400067}
}

@article {Hairer.2011,
    AUTHOR = {Hairer, Martin},
     TITLE = {On {M}alliavin's proof of {H}\"{o}rmander's theorem},
   JOURNAL = {Bull. Sci. Math.},
  FJOURNAL = {Bulletin des Sciences Math\'{e}matiques},
    VOLUME = {135},
      YEAR = {2011},
    NUMBER = {6-7},
     PAGES = {650--666},
      ISSN = {0007-4497},
   MRCLASS = {60H07 (60H30 60J60)},
  MRNUMBER = {2838095},
MRREVIEWER = {Shi Zan Fang},
       DOI = {10.1016/j.bulsci.2011.07.007},
       URL = {https://doi.org/10.1016/j.bulsci.2011.07.007},
}

@ARTICLE{JabinWang.2018.IM523,
  author = {Jabin, Pierre-Emmanuel and Wang, Zhenfu},
  title = {Quantitative estimates of propagation of chaos for stochastic systems
	with {$W^{-1,\infty}$} kernels},
  journal = {Invent. Math.},
  year = {2018},
  volume = {214},
  pages = {523--591},
  number = {1},
  doi = {10.1007/s00222-018-0808-y},
  fjournal = {Inventiones Mathematicae},
  issn = {0020-9910,1432-1297},
  mrclass = {35Q30 (35R60 60F10 60F17 60H10 60K35 76R99)},
  mrnumber = {3858403},
  mrreviewer = {Paul\ Andr\'{e}\ Razafimandimby},
  url = {https://doi.org/10.1007/s00222-018-0808-y}
}

@incollection {MilletSanz1994,
    AUTHOR = {Millet, Annie and Sanz-Sol\'{e}, Marta},
     TITLE = {A simple proof of the support theorem for diffusion processes},
 BOOKTITLE = {S\'{e}minaire de {P}robabilit\'{e}s, {XXVIII}},
    SERIES = {Lecture Notes in Math.},
    VOLUME = {1583},
     PAGES = {36--48},
 PUBLISHER = {Springer, Berlin},
      YEAR = {1994},
   MRCLASS = {60J60},
  MRNUMBER = {1329099},
MRREVIEWER = {S. Ramasubramanian},
       DOI = {10.1007/BFb0073832},
       URL = {https://doi.org/10.1007/BFb0073832},
}

@ARTICLE{LiuWuZhang.2021.CMP179,
  author = {Liu, Wei and Wu, Liming and Zhang, Chaoen},
  title = {Long-time behaviors of mean-field interacting particle systems related
	to {M}c{K}ean-{V}lasov equations},
  journal = {Comm. Math. Phys.},
  year = {2021},
  volume = {387},
  pages = {179--214},
  number = {1},
  doi = {10.1007/s00220-021-04198-5},
  fjournal = {Communications in Mathematical Physics},
  issn = {0010-3616},
  mrclass = {82C22 (35Q70 60B10)},
  mrnumber = {4312362},
  url = {https://doi.org/10.1007/s00220-021-04198-5}
}

@ARTICLE{RenRocknerWang.2022.JDE1,
  author = {Ren, Panpan and R\"{o}ckner, Michael and Wang, Feng-Yu},
  title = {Linearization of nonlinear {F}okker-{P}lanck equations and applications},
  journal = {J. Differential Equations},
  year = {2022},
  volume = {322},
  pages = {1--37},
  doi = {10.1016/j.jde.2022.03.021},
  fjournal = {Journal of Differential Equations},
  issn = {0022-0396},
  mrclass = {60J60 (58J65)},
  mrnumber = {4398417},
  mrreviewer = {Huaqiao Wang},
  url = {https://doi.org/10.1016/j.jde.2022.03.021}
}

@INCOLLECTION{Sznitman.1991.165,
  author = {Sznitman, Alain-Sol},
  title = {Topics in propagation of chaos},
  booktitle = {\'{E}cole d'\'{E}t\'{e} de {P}robabilit\'{e}s de {S}aint-{F}lour
	{XIX}---1989},
  publisher = {Springer, Berlin},
  year = {1991},
  volume = {1464},
  series = {Lecture Notes in Math.},
  pages = {165--251},
  doi = {10.1007/BFb0085169},
  mrclass = {60J60 (60K35 82C40)},
  mrnumber = {1108185},
  mrreviewer = {Maria E. Vares},
  url = {https://doi.org/10.1007/BFb0085169}
}

@ARTICLE{Wang.2023.SPA265,
  author = {Wang, Feng-Yu},
  title = {Exponential ergodicity for singular reflecting {M}c{K}ean-{V}lasov
	{SDE}s},
  journal = {Stochastic Process. Appl.},
  year = {2023},
  volume = {160},
  pages = {265--293},
  doi = {10.1016/j.spa.2023.03.009},
  fjournal = {Stochastic Processes and their Applications},
  issn = {0304-4149,1879-209X},
  mrclass = {60H10 (60G65)},
  mrnumber = {4567526},
  url = {https://doi.org/10.1016/j.spa.2023.03.009}
}

@article {LPP15,
    AUTHOR = {Lemaire, Vincent and Pag\`es, Gilles and Panloup, Fabien},
     TITLE = {Invariant measure of duplicated diffusions and application to
              {R}ichardson-{R}omberg extrapolation},
   JOURNAL = {Ann. Inst. Henri Poincar\'e{} Probab. Stat.},
  FJOURNAL = {Annales de l'Institut Henri Poincar\'e{} Probabilit\'es et
              Statistiques},
    VOLUME = {51},
      YEAR = {2015},
    NUMBER = {4},
     PAGES = {1562--1596},
      ISSN = {0246-0203,1778-7017},
   MRCLASS = {60G10 (60F05 60J60 65C05)},
  MRNUMBER = {3414458},
MRREVIEWER = {Antoine\ J.\ Lejay},
       DOI = {10.1214/13-AIHP591},
       URL = {https://doi.org/10.1214/13-AIHP591},
}

@article {S00,
    AUTHOR = {Stevens, Angela},
     TITLE = {The derivation of chemotaxis equations as limit dynamics of
              moderately interacting stochastic many-particle systems},
   JOURNAL = {SIAM J. Appl. Math.},
  FJOURNAL = {SIAM Journal on Applied Mathematics},
    VOLUME = {61},
      YEAR = {2000},
    NUMBER = {1},
     PAGES = {183--212},
      ISSN = {0036-1399},
   MRCLASS = {92C17 (60K99 92D50)},
  MRNUMBER = {1776393},
MRREVIEWER = {John G. Milton},
       DOI = {10.1137/S0036139998342065},
       URL = {https://doi.org/10.1137/S0036139998342065},
}

@Book{Kunita.1997.346,
  author    = {Kunita, Hiroshi},
  publisher = {Cambridge University Press, Cambridge},
  title     = {Stochastic flows and stochastic differential equations},
  year      = {1997},
  isbn      = {0-521-35050-6; 0-521-59925-3},
  note      = {Reprint of the 1990 original},
  series    = {Cambridge Studies in Advanced Mathematics},
  volume    = {24},
  mrclass   = {60H10 (35R60 60H15)},
  mrnumber  = {1472487},
  pages     = {xiv+346},
}

@Article{Ruelle.1979.IHESPM27,
  author     = {Ruelle, David},
  journal    = {Inst. Hautes \'Etudes Sci. Publ. Math.},
  title      = {Ergodic theory of differentiable dynamical systems},
  year       = {1979},
  issn       = {0073-8301,1618-1913},
  number     = {50},
  pages      = {27--58},
  fjournal   = {Institut des Hautes \'Etudes Scientifiques. Publications Math\'ematiques},
  mrclass    = {58F18 (58F15 60G99 82A05)},
  mrnumber   = {556581},
  mrreviewer = {L.\ A.\ Bunimovich},
  url        = {http://www.numdam.org/item?id=PMIHES_1979__50__27_0},
}

@Article{BaxendaleStroock.1988.PTRF169,
  author     = {Baxendale, P. H. and Stroock, D. W.},
  journal    = {Probab. Theory Related Fields},
  title      = {Large deviations and stochastic flows of diffeomorphisms},
  year       = {1988},
  issn       = {0178-8051},
  number     = {2},
  pages      = {169--215},
  volume     = {80},
  doi        = {10.1007/BF00356102},
  fjournal   = {Probability Theory and Related Fields},
  mrclass    = {58G32 (58D25 60F10 60H99)},
  mrnumber   = {968817},
  mrreviewer = {Yu. E. Gliklikh},
  url        = {https://doi.org/10.1007/BF00356102},
}

@Article{Carverhill.1985.S273,
  author     = {Carverhill, Andrew},
  journal    = {Stochastics},
  title      = {Flows of stochastic dynamical systems: ergodic theory},
  year       = {1985},
  issn       = {0090-9491},
  number     = {4},
  pages      = {273--317},
  volume     = {14},
  doi        = {10.1080/17442508508833343},
  fjournal   = {Stochastics},
  mrclass    = {58F11 (58F25 60H20)},
  mrnumber   = {805125},
  mrreviewer = {R. W. R. Darling},
  url        = {https://doi.org/10.1080/17442508508833343},
}
\end{document}